\newcommand{\scal}[2]{\langle #1,#2\rangle}
\newcommand{\rr}[1]{\mathbf R^{#1}}
\newcommand{\nm}[2]{\Vert #1\Vert _{#2}}
\newcommand{\abp}[1]{\vert #1\vert}
\newcommand{\op}{\operatorname{Op}}
\newcommand{\sets}[2]{\{ \, #1\, ;\, #2\, \} }
\newcommand{\Sets}[2]{\left \{ \, #1\, ;\, #2\, \right \} }
\newcommand{\ep}{\varepsilon}
\newcommand{\fy}{\varphi}
\newcommand{\cdo}{\, \cdot \, }
\newcommand{\loc}{\operatorname{loc}}
\newcommand{\wpr}{{\text{\footnotesize $\#$}}}
\newcommand{\vrum}{\vspace{0.1cm}}
\newcommand{\px}{\psi_{x_0}}
\newcommand{\nn}[1]{{\mathbf N}^{#1}}
\newcommand{\maclS}{\mathcal S}
\newcommand{\mascF}{\mathscr F}
\newcommand{\mascS}{\mathscr S}
\newcommand{\mascP}{\mathscr P}
\def\px{\langle x \rangle}
\def\MdR{\mathbf{M}(d, \mathbf{R} )}
\numberwithin{equation}{section}          
\newtheorem{thm}{Theorem}
\numberwithin{thm}{section}
\newtheorem*{tom}{\rubrik}
\newcommand{\rubrik}{}
\newtheorem{prop}[thm]{Proposition}
\newtheorem{cor}[thm]{Corollary}
\newtheorem{lemma}[thm]{Lemma}
\theoremstyle{definition}
\newtheorem{defn}[thm]{Definition}
\theoremstyle{remark}
\author{Ahmed Abdeljawad}
\address{Dipartimento di Matematica, Universit\`a di Torino, Italy}
\email{ahmed.abdeljawad@unito.it}
\author{Marco Cappiello}
\address{Dipartimento di Matematica, Universit\`a di Torino, Italy}
\email{marco.cappiello@unito.it}
\author{Joachim Toft}
\address{Department of Mathematics,
Linn{\ae}us University, V{\"a}xj{\"o}, Sweden}
\email{joachim.toft@lnu.se}
\title{Pseudo-differential calculus in anisotropic Gelfand-Shilov setting}
\begin{document}

\begin{abstract}
We study some classes of pseudo-differential operators with symbols $a$ 
admitting anisotropic exponential growth at infinity 
and we prove 
mapping properties for these operators on Gelfand-Shilov spaces of type $\mathscr{S}$.
%
%
%
%
%
Moreover, we deduce algebraic and certain invariance properties of these classes.
\end{abstract}

\maketitle

\par

\section{Introduction}\label{sec0}

\par

Gelfand-Shilov spaces of type $\mathscr{S}$ have been introduced in the book
\cite{GS} as an alternative functional setting to the Schwartz space $\mathscr{S}(\rr d)$
of smooth and rapidly decreasing functions for Fourier analysis and for the study of
partial differential equations. Namely, fixed $s>0, \sigma >0$, the space
$\mathcal{S}_s^\sigma(\rr d )$ can be defined as the space of all functions
$f \in C^\infty(\rr d)$ satisfying an estimate of the form 
\begin{equation}\label{GSestimate1}
\sup_{\alpha, \beta \in \nn d}\sup_{x \in \rr d} \frac{|x^\beta \partial^\alpha f(x)|}
{h^{|\alpha+\beta|}\alpha!^\sigma \beta!^s} < \infty
\end{equation}
for some constant $h >0$, or the equivalent condition
\begin{equation}\label{GSestimate2}
\sup_{\alpha \in \nn d} \sup_{x \in \rr d} \frac {|e^{r|x|^{\frac 1s}} \partial^\alpha f(x)|}
{h^{|\alpha|} \alpha !^{\sigma}}  
 < \infty
\end{equation}
for some constants $h,r >0$.
For $\sigma >1$,  $\mathcal{S}^\sigma _s(\rr d)$ represents a natural global
counterpart of the Gevrey class $G^\sigma(\rr d)$ but, in addition, the condition
\eqref{GSestimate2} encodes a precise description of the behavior at infinity of
$f$. Together with $\mathcal{S}_s^\sigma (\rr d)$ one can also consider the space
$\Sigma_s^\sigma (\rr d )$, which has been defined in \cite{Pi}  by requiring
\eqref{GSestimate1} (respectively \eqref{GSestimate2}) to hold for every
$\ep >0$ (respectively for every $ h,r >0$). The duals of $\maclS _s
^\sigma (\rr d)$ and $\Sigma _s^\sigma (\rr d)$ and further generalizations
of these spaces have been then introduced in the spirit of Komatsu theory
of ultradistributions, see \cite{ChuChuKim, Pi}. 

\par

After their appearance, Gelfand-Shilov spaces have been recognized as a
natural functional setting for pseudo-differential and Fourier integral operators,
due to their nice behavior under Fourier transformation, and applied in the study
of several classes of partial differential equations, see e.{\,}g.
\cite{AC, Ca1, Ca2, CGR2, CGR3, CPP, CPP2}.

\par

According to the condition on the decay at infinity of the elements of $\maclS
_s^\sigma (\rr d)$ and $\Sigma_s^\sigma (\rr d)$, we can define on these
spaces pseudo-differential operators with symbols $a(x,\xi)$ admitting an
exponential growth at infinity. These operators are commonly known as
operators {\it of infinite order} and they have been studied in \cite{BM} in
the analytic class and in \cite{Za, CZ, KN} in the Gevrey spaces where the
symbol has an exponential growth only with respect to $\xi$ and applied to
the Cauchy problem for hyperbolic and Schr\"odinger equations in Gevrey
classes, see \cite{ CZ, CZ2, KB, CRe1}. Parallel results have been obtained
in Gelfand-Shilov spaces for symbols admitting exponential growth both in
$x$ and $\xi$, see \cite{Ca1, Ca2, CaWa, CPP, CPP2, Pr}.

\par

We stress that the above results concern the non-quasi-analytic iso\-tro\-pic
case $s =\sigma>1.$ In \cite{CaTo} we considered the more
general case $s=\sigma >0$, which is interesting in particular in connection with
Shubin-type pseudo-differential operators, cf. \cite{CGR2, CRT}. Although the
extension of the complete calculus developed in \cite{Ca1,Ca2} in this case is
out of reach due to the lack of compactly supported functions in $\maclS
_s^\sigma (\rr d)$ and $\Sigma_s^\sigma (\rr d)$, nevertheless some interesting
results can be achieved also in this case by using different tools than the usual
micro-local techniques, namely a method based on the use of modulation
spaces and of the short time Fourier transform.

\par

In the present paper, we further
generalize the results of \cite{CaTo} to the case when $s>0$ and $\sigma >0$ may be
different from each other. Thus the symbols we consider may have different rates of
exponential growth and anisotropic Gevrey-type regularity in $x$ and $\xi$.
%
%
More precisely, the symbols should obey conditions of the form
\begin{equation}\label{GSestimate3}
\sup_{\alpha ,\beta \in \nn d} \sup_{x,\xi  \in \rr d} \frac {|e^{-r(|x|^{\frac 1s}+|\xi |^{\frac 1\sigma})}
\partial _x^\alpha \partial _\xi ^\beta a(x,\xi )|}
{h^{|\alpha +\beta|} \alpha !^{\sigma} \beta !^s}  
 < \infty
\end{equation}
for suitable restrictions on the constants $h,r >0$ (cf. \eqref{GSestimate2}). We prove that
if $h>0$, and \eqref{GSestimate3} holds true for every $r>0$, then the pseudo-differential
operator $\op (a)$ is continuous on $\maclS _s^\sigma$ and on $(\maclS _s^\sigma)'$.
If instead $r>0$, and \eqref{GSestimate3} holds true for every $h>0$, then we prove
that $\op (a)$ is continuous on $\Sigma _s^\sigma$ and on $(\Sigma _s^\sigma)'$ (cf.
Theorems \ref{Thm:theorem2}  and  \ref{Thm:theorem1}). We also prove that
pseudo-differential operators with symbols satisfying such conditions form algebras
(cf. Theorems \ref{Thm:GammaAlgebras} and \ref{Composition}). Finally we show
that our span of pseudo-differential operators is invariant under the choice of representation
(cf. Theorem \ref{ThmCalculiTransf2}).

\par

An important ingredient in the analysis which is used to reach these properties concerns
characterizations of symbols above in terms of suitable estimates of their short-time Fourier
transforms. Such characterizations are deduced in Section \ref{sec2}.

\par

The paper is organized as follows. In Section \ref{sec1}, after recalling some basic properties of the
spaces $\mathcal{S}_s^\sigma(\rr d)$ and $\Sigma_s^\sigma(\rr d)$, we introduce several
general symbol classes. In Section \ref{sec2} we characterize these symbols
in terms of the behavior of their short time Fourier transform. In Section
\ref{sec3} we deduce continuity on $\mathcal{S}
_s(\rr d)$ and $\Sigma_s(\rr d)$, composition and invariance properties for
pseudo-differential operators in our classes.

\par

\section{Preliminaries}\label{sec1}

\par

In this section we recall some basic facts, especially
concerning Gel\-fand-Shilov spaces, the
short-time Fourier transform and pseudo-dif\-fe\-ren\-tial operators.

\par

We let $\mathscr{S}(\rr d)$ be the Schwartz space
of rapidly decreasing
functions on $\rr d$ together with their derivatives, and by $\mathscr{S}'(\rr d)$
the corresponding dual space of tempered distributions. Moreover
$\MdR$ will denote the vector space of real $d\times d$ matrices.

\subsection{Gelfand-Shilov spaces} 
\par
We start by recalling some facts about Gelfand-Shilov spaces.
Let $0<h,s,\sigma \in \mathbf R$ be fixed. Then
$\maclS _{s;h}^{\sigma}(\rr d)$ is
the Banach space of all $f\in C^\infty (\rr d)$ such that
\begin{equation}\label{gfseminorm}
\nm f{\maclS _{s;h}^{\sigma}}\equiv \sup_{\alpha ,\beta \in
\mathbf N^d} \sup_{x \in \rr d} \frac {|x^\alpha \partial ^\beta
f(x)|}{h^{|\alpha | + |\beta |}\alpha !^s\, \beta !^\sigma}<\infty,
\end{equation}
endowed with the norm \eqref{gfseminorm}.

\par

The \emph{Gelfand-Shilov spaces} $\maclS _{s}^{\sigma}(\rr d)$ and
$\Sigma _{s}^{\sigma}(\rr d)$ are defined as the inductive and projective 
limits respectively of $\maclS _{s;h}^{\sigma}(\rr d)$. This implies that
\begin{equation}\label{GSspacecond1}
\maclS _{s}^{\sigma}(\rr d) = \bigcup _{h>0}\maclS _{s;h}^{\sigma}(\rr d)
\quad \text{and}\quad \Sigma _{s}^{\sigma}(\rr d) =\bigcap _{h>0}
\maclS _{s;h}^{\sigma}(\rr d),
\end{equation}
and that the topology for $\maclS _{s}^{\sigma}(\rr d)$ is the strongest
possible one such that the inclusion map from $\maclS _{s;h}^{\sigma}
(\rr d)$ to $\maclS _{s}^{\sigma}(\rr d)$ is continuous, for every choice 
of $h>0$. The space $\Sigma _{s}^{\sigma}(\rr d)$ is a Fr{\'e}chet space
with seminorms $\nm \cdo {\maclS _{s;h}^{\sigma}}$, $h>0$. Moreover,
$\Sigma _{s}^{\sigma}(\rr d)\neq \{ 0\}$, if and only if $s+\sigma \ge 1$ and $(s,\sigma )\neq
(\frac 12,\frac 12)$, and
$\maclS _{s}^{\sigma}(\rr d)\neq \{ 0\}$, if and only
if $s+\sigma \ge 1$.

\par

The spaces $\maclS _{s}^{\sigma}(\rr d)$ and $\Sigma _{s}^{\sigma}(\rr d)$ can be characterized 
also in terms of the exponential decay of their elements, namely $f \in \maclS _{s}^{\sigma}(\rr d)$ 
(respectively $f \in \Sigma _{s}^{\sigma}(\rr d)$) if and only if 
$$
|\partial^\alpha f(x)| \lesssim \ep^{|\alpha|} (\alpha!)^\sigma  e^{-h|x|^{\frac 1s}}
$$
for some $h>0, \ep>0$ (respectively for every $h>0, \ep>0$). 
Moreover we recall that for $s <1$ the elements of $\maclS _{s}^{\sigma}(\rr d)$ admit entire 
extensions to $\mathbf{C}^d$ satisfying suitable exponential bounds, cf. \cite{GS} for details.

\medspace

The \emph{Gelfand-Shilov distribution spaces} $(\maclS _{s}^{\sigma})'(\rr d)$
and $(\Sigma _{s}^{\sigma})'(\rr d)$ are the projective and inductive limit
respectively of $(\maclS _{s;h}^{\sigma})'(\rr d)$.  This means that
\begin{equation}\tag*{(\ref{GSspacecond1})$'$}
(\maclS _{s}^{\sigma})'(\rr d) = \bigcap _{h>0}(\maclS _{s;h}^{\sigma})'(\rr d)\quad
\text{and}\quad (\Sigma _{s}^{\sigma})'(\rr d) =\bigcup _{h>0}(\maclS _{s;h}^{\sigma})'(\rr d).
\end{equation}
We remark that in \cite{Pi2} it is proved that $(\maclS _{s}^{\sigma})'(\rr d)$
is the dual of $\maclS _{s,\sigma}(\rr d)$, and $(\Sigma _{s}^{\sigma})'(\rr d)$
is the dual of $\Sigma _{s}^{\sigma}(\rr d)$ (also in topological sense).

\par

For every $s,\sigma >0$ we have
\begin{equation}\label{Eq:GSEmbeddings}
\Sigma _s^\sigma (\rr d)
\hookrightarrow
\maclS _s^\sigma (\rr d)
\hookrightarrow
\Sigma _{s+\ep}^{\sigma +\ep}(\rr d)
\hookrightarrow
\mascS (\rr d)
\end{equation}
for every $\ep >0$. If $s+\sigma \ge 1$, then
the last two inclusions in \eqref{Eq:GSEmbeddings} are dense,
and if in addition $(s,\sigma )\neq (\frac 12,\frac 12)$, then the
first inclusion in \eqref{Eq:GSEmbeddings} is dense.

\par

From these properties it follows that $\mascS '(\rr d)\hookrightarrow
(\maclS _s^\sigma)'(\rr d)$ when $s+\sigma \ge 1$, and if in addition
$(s,\sigma )\neq (\frac 12,\frac 12)$, then $(\maclS _s^\sigma)'(\rr d)
\hookrightarrow (\Sigma _s^\sigma)'(\rr d)$.

\par

The Gelfand-Shilov spaces possess several convenient mapping
properties. For example they are invariant under
translations, dilations, and to some extent tensor products
and (partial) Fourier transformations.

\par

The Fourier transform $\mathscr F$ is the linear and continuous
map on $\mascS (\rr d)$,
given by the formula
$$
(\mathscr Ff)(\xi )= \widehat f(\xi ) \equiv (2\pi )^{-\frac d2}\int _{\rr
{d}} f(x)e^{-i\scal  x\xi }\, dx
$$
when $f\in \mascS (\rr d)$. Here $\scal \cdo \cdo$ denotes the usual
scalar product on $\rr d$. 
The Fourier transform extends  uniquely to homeomorphisms
from $(\maclS _{s}^{\sigma})'(\rr d)$ to $(\maclS _{\sigma}^{s})'(\rr d)$,
and from  $(\Sigma _{s}^{\sigma})'(\rr d)$ to $(\Sigma _{\sigma}^{s})'(\rr d)$.
Furthermore, it restricts to homeomorphisms from
$\maclS _{s}^{\sigma}(\rr d)$ to $\maclS _{\sigma}^{s}(\rr d)$,
and from  $\Sigma _{s}^{\sigma}(\rr d)$ to $\Sigma _{\sigma}^{s}(\rr d)$.

\par

\medspace

Some considerations later on involve a broader family of
Gelfand-Shilov spaces. More precisely, for $s_j,\sigma _j\in \mathbf R_+$,
$j=1,2$, the Gelfand-Shilov spaces $\maclS _{s _1,s_2}^{\sigma _1,\sigma _2}(\rr {d_1+d_2})$ and
$\Sigma _{s _1,s_2}^{\sigma _1,\sigma _2}(\rr {d_1+d_2})$ consist of all functions
$F\in C^\infty (\rr {d_1+d_2})$ such that
\begin{equation}\label{GSExtCond}
|x_1^{\alpha _1}x_2^{\alpha _2}\partial _{x_1}^{\beta _1}
\partial _{x_2}^{\beta _2}F(x_1,x_2)| \lesssim
h^{|\alpha _1+\alpha _2+\beta _1+\beta _2|}
\alpha _1!^{s_1}\alpha _2!^{s_2}\beta _1!^{\sigma _1}\beta _2!^{\sigma _2}
\end{equation}
for some $h>0$ respective for every $h>0$. The  topologies, and the duals
\begin{alignat*}{3}
&(\maclS _{s _1,s_2}^{\sigma _1,\sigma _2})'(\rr {d_1+d_2}) &
&\quad \text{and} \quad &
&(\Sigma _{s _1,s_2}^{\sigma _1,\sigma _2})'(\rr {d_1+d_2})
\intertext{of}
&\maclS _{s _1,s_2}^{\sigma _1,\sigma _2}(\rr {d_1+d_2}) &
&\quad \text{and} \quad &
&\Sigma _{s _1,s_2}^{\sigma _1,\sigma _2}(\rr {d_1+d_2}),
\end{alignat*}
respectively, and their topologies
are defined in analogous ways as for the spaces $\maclS _s^\sigma (\rr d)$
and $\Sigma _s^\sigma (\rr d)$ above.

\par

The following proposition explains mapping properties of partial
Fourier transforms on Gelfand-Shilov spaces, and follows by similar
arguments as in analogous situations in
\cite{GS}. The proof is therefore omitted. Here, $\mascF _1F$
and $\mascF _2F$ are the partial
Fourier transforms of $F(x_1,x_2)$ with respect to
$x_1\in \rr {d_1}$ and $x_2\in \rr {d_2}$,
respectively.

\par

\begin{prop}\label{propBroadGSSpaceChar}
Let $s_j,\sigma _j >0$, $j=1,2$.
Then the following is true:
\begin{enumerate}
\item the mappings $\mascF _1$ and $\mascF _2$ on $\mascS (\rr {d_1+d_2})$
restrict to homeomorphisms
\begin{align*}
\mascF _1 \, &: \, \maclS _{s _1,s_2}^{\sigma _1,\sigma _2}(\rr {d_1+d_2}) \to
\maclS _{\sigma _1,s_2}^{s_1,\sigma _2}(\rr {d_1+d_2})
\intertext{and}
\mascF _2 \, &: \, \maclS _{s _1,s_2}^{\sigma _1,\sigma _2}(\rr {d_1+d_2}) \to
\maclS _{s _1,\sigma _2}^{\sigma _1,s_2}(\rr {d_1+d_2})
\text ;
\end{align*}

\vrum

\item the mappings $\mascF _1$ and $\mascF _2$ on
$\mascS (\rr {d_1+d_2})$ are uniquely extendable to
homeomorphisms
\begin{align*}
\mascF _1 \, &: \, (\maclS _{s _1,s_2}^{\sigma _1,\sigma _2})'(\rr {d_1+d_2}) \to
(\maclS _{\sigma _1,s_2}^{s_1,\sigma _2})'(\rr {d_1+d_2})
\intertext{and}
\mascF _2 \, &: \, (\maclS _{s _1,s_2}^{\sigma _1,\sigma _2})'(\rr {d_1+d_2}) \to
(\maclS _{s _1,\sigma _2}^{\sigma _1,s_2})'(\rr {d_1+d_2}).
\end{align*}
\end{enumerate}

\par

The same holds true if the $\maclS  _{s _1,s_2}^{\sigma _1,\sigma _2}$-spaces and
their duals are replaced by
corresponding $\Sigma  _{s _1,s_2}^{\sigma _1,\sigma _2}$-spaces and their duals.
\end{prop}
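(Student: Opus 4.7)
The plan is to reduce the claim to the classical one-variable statement $\mascF : \maclS_{s}^{\sigma}(\rr{d_1}) \to \maclS_{\sigma}^{s}(\rr{d_1})$ recalled earlier, by treating the second group of variables as a parameter. I would first pass from the polynomial-growth presentation \eqref{GSExtCond} to the equivalent exponential-decay characterization: $F \in \maclS _{s_1,s_2}^{\sigma _1,\sigma _2}(\rr{d_1+d_2})$ iff there exist $h,r>0$ (respectively, for the $\Sigma$-space, for every $h,r>0$) such that
\begin{equation*}
|\partial _{x_1}^{\beta _1}\partial _{x_2}^{\beta _2} F(x_1,x_2)|
\lesssim h^{|\beta _1|+|\beta _2|}\beta _1!^{\sigma _1}\beta _2!^{\sigma _2}
e^{-r(|x_1|^{1/s_1}+|x_2|^{1/s_2})}.
\end{equation*}
This is obtained, as in \cite{GS}, by optimizing \eqref{GSExtCond} separately in $\alpha_1$ and $\alpha_2$, using Stirling.

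Next I would exploit the standard Fourier identities applied only in the first variable, combined with the fact that $\partial_{x_2}^{\beta_2}$ commutes with $\mascF_1$, to write
\begin{equation*}
\xi _1^{\alpha _1}D_{\xi _1}^{\beta _1}\partial _{x_2}^{\beta _2}(\mascF _1 F)(\xi _1,x_2)
= \mascF _1\bigl(D_{x_1}^{\alpha _1}(x_1^{\beta _1}\partial _{x_2}^{\beta _2} F)\bigr)(\xi _1,x_2).
\end{equation*}
Taking $L^\infty$ in $\xi_1$ and using $\|\mascF_1 g\|_{L^\infty_{\xi_1}} \lesssim \|g\|_{L^1_{x_1}}$ reduces the required seminorm estimate in $\maclS _{\sigma _1,s_2}^{s_1,\sigma _2}$ to bounding
\begin{equation*}
\int _{\rr{d_1}} |D_{x_1}^{\alpha _1}(x_1^{\beta _1}\partial _{x_2}^{\beta _2}F(x_1,x_2))|\, dx_1
\end{equation*}
uniformly in $x_2$, followed by multiplication by $x_2^{\alpha_2}$. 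Leibniz plus the exponential-decay bound above controls the $x_1$-integral: the factor $|x_1|^{|\beta_1|}e^{-r'|x_1|^{1/s_1}}$ integrates to $\lesssim C^{|\beta_1|}\beta _1!^{s_1}$ by the Gamma integral and Stirling, while the derivatives in $x_1$ contribute $\alpha _1!^{\sigma _1}$, and the residual exponential factor $e^{-r''|x_2|^{1/s_2}}$ absorbs the $x_2^{\alpha_2}$ producing $\alpha_2!^{s_2}$. Collecting gives the bound for the $\maclS _{\sigma _1,s_2}^{s_1,\sigma _2}$-seminorm with new constants $h',r'$ depending only on $h,r$.

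Since the same argument applied to $\mascF _1^{-1}$ shows the reverse mapping, $\mascF_1$ is a homeomorphism; $\mascF_2$ is treated identically by symmetry. The $\Sigma$-version follows by replacing the quantifier "for some $h,r$" by "for every $h,r$" at each step. Part (ii) follows by transposition: since $\mascF_1$ is (up to a harmless change of variables) formally self-adjoint, continuity of $\mascF_1$ on the test function side immediately yields continuity on the dual side, using the density of $\mascS(\rr{d_1+d_2})$ in these spaces. The main technical obstacle is the careful bookkeeping of the factorial powers $\alpha!^{s_1}, \beta!^{\sigma_1}$ after Fourier transform, and ensuring that the constants $h',r'$ in the target seminorm depend only on $h,r$ and not on $\alpha,\beta,x_2$; this is the standard Gevrey combinatorics underlying the classical one-variable case, adapted with parameters.
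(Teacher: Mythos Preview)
Your proposal is correct and is precisely the standard argument the paper alludes to: the paper omits the proof entirely, stating only that it ``follows by similar arguments as in analogous situations in \cite{GS}.'' Your reduction to the one-variable Fourier mapping $\mascF:\maclS_s^\sigma\to\maclS_\sigma^s$ via the exponential-decay characterization, treating the remaining variables as parameters and using the $L^1\to L^\infty$ bound together with the Gamma-integral/Stirling combinatorics, is exactly that classical route; the duality argument by transposition is likewise standard. One small wording slip: in part~(ii) you invoke ``density of $\mascS(\rr{d_1+d_2})$ in these spaces''---what is actually used (and what holds) is density of $\mascS$ in the \emph{dual} spaces $(\maclS _{s_1,s_2}^{\sigma_1,\sigma_2})'$, which follows since the Gelfand-Shilov test space itself is dense in its dual; with that clarification the uniqueness of the extension is immediate.
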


\par

The next two results follow from \cite{ChuChuKim}. The proofs are therefore omitted.

\begin{prop}
Let $s_j,\sigma _j> 0$, $j=1,2$. Then the following
conditions are equivalent.
\begin{enumerate}
\item $F\in \maclS _{s _1,s_2}^{\sigma _1,\sigma _2}(\rr {d_1+d_2})$\quad
($F\in \Sigma _{s _1,s_2}^{\sigma _1,\sigma _2}(\rr {d_1+d_2})$);

\vrum

\item for some $h>0$ (for every $h>0$) it holds
\begin{equation*}
\displaystyle{|F(x_1,x_2)|\lesssim e^{-h(|x_1|^{\frac 1{s_1}} + |x_2|^{\frac 1{s_2}} )}}
\quad \text{and}\quad 
\displaystyle{|\widehat F(\xi _1,\xi _2)|\lesssim
e^{-h(|\xi _1|^{\frac 1{\sigma _1}} + |\xi _2|^{\frac 1{\sigma _2}} )}}.
\end{equation*}
\end{enumerate}
\end{prop}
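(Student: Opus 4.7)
I would prove $(1)\Leftrightarrow(2)$ by adapting the classical isotropic argument from \cite{GS, ChuChuKim} to the anisotropic product setting, handling the Roumieu case (``for some $h$'') and the Beurling case (``for every $h$'') in complete parallel.

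For $(1)\Rightarrow(2)$, the bound \eqref{GSExtCond} with $\beta_1=\beta_2=0$ reads $|x_1^{\alpha_1}x_2^{\alpha_2}F(x)|\lesssim h^{|\alpha_1|+|\alpha_2|}\alpha_1!^{s_1}\alpha_2!^{s_2}$. Minimizing coordinate-wise in $\alpha_j$ via the standard Stirling-type inequality $\inf_{n\in\mathbf N}(R^{n}n!^{s}/t^{n})\asymp e^{-c_{s,R}t^{1/s}}$ produces $|F(x_1,x_2)|\lesssim e^{-h'(|x_1|^{1/s_1}+|x_2|^{1/s_2})}$. Proposition \ref{propBroadGSSpaceChar} places $\widehat F$ in $\maclS_{\sigma_1,\sigma_2}^{s_1,s_2}$, and repeating the reasoning for $\widehat F$ yields the companion decay estimate on $\widehat F$.

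For $(2)\Rightarrow(1)$, the reverse Stirling estimate turns the decay of $F$ into $|x_1^{\alpha_1}x_2^{\alpha_2}F(x)|\lesssim h'^{|\alpha_1|+|\alpha_2|}\alpha_1!^{s_1}\alpha_2!^{s_2}$, and Fourier inversion combined with the decay of $\widehat F$ and the elementary estimate $\int t^n e^{-ht^{1/s}}\,dt\lesssim h'^{n}n!^{s}$ yields $\|\partial_{x_1}^{\beta_1}\partial_{x_2}^{\beta_2}F\|_{L^\infty}\lesssim h'^{|\beta_1|+|\beta_2|}\beta_1!^{\sigma_1}\beta_2!^{\sigma_2}$.

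The main obstacle is promoting these \emph{separate} decay and smoothness bounds into the joint anisotropic estimate \eqref{GSExtCond}. For this I would use the identity
\[
x^\alpha\partial^\beta F(x)=(2\pi)^{-\tfrac{d_1+d_2}{2}}i^{|\beta|}\int\partial_\xi^\alpha\bigl[\xi^\beta\widehat F(\xi)\bigr]e^{i\scal x\xi}\,d\xi
\]
(obtained by $|\alpha|$ integrations by parts), expand via Leibniz into terms $c_\gamma\xi^{\beta-\gamma}\partial_\xi^{\alpha-\gamma}\widehat F(\xi)$ with $\gamma\leq\alpha,\beta$, and establish the key pointwise bound
\[
|\partial_\xi^{\alpha-\gamma}\widehat F(\xi)|\lesssim h''^{|\alpha-\gamma|}(\alpha_1-\gamma_1)!^{s_1}(\alpha_2-\gamma_2)!^{s_2}\,e^{-h''(|\xi_1|^{1/\sigma_1}+|\xi_2|^{1/\sigma_2})}.
\]
The Gevrey growth in this bound comes from the identity $\partial_\xi^\delta\widehat F(\xi)=\widehat{(-ix)^\delta F}(\xi)$ together with the decay of $F$; the exponential decay is the hypothesis on $\widehat F$. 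Merging them into a single inequality is the crux and requires a Faà di Bruno analysis of $e^{h'''(|\xi_1|^{1/\sigma_1}+|\xi_2|^{1/\sigma_2})}\widehat F(\xi)$, exploiting that $|\xi_j|^{1/\sigma_j}$ generates a $\sigma_j$-Gevrey weight up to a small loss in $h'''$. Once available, integration against $\xi^{\beta-\gamma}$ produces the factorials $\beta_j!^{\sigma_j}$ via the same elementary estimate invoked earlier, and the Leibniz binomials absorb into the combined factorial $\alpha_1!^{s_1}\alpha_2!^{s_2}\beta_1!^{\sigma_1}\beta_2!^{\sigma_2}$, completing \eqref{GSExtCond}.
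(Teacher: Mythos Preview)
The paper gives no proof of this proposition; it simply records that ``the next two results follow from \cite{ChuChuKim}'' and omits the argument.  Your plan is precisely an anisotropic adaptation of that reference, so in spirit you are doing exactly what the paper invokes.

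One point deserves more care than your sketch suggests.  The step you flag as ``the crux'' --- upgrading the separate bounds $|\partial_\xi^\delta\widehat F|\lesssim h^{|\delta|}\delta_1!^{s_1}\delta_2!^{s_2}$ and $|\widehat F(\xi)|\lesssim e^{-h(|\xi_1|^{1/\sigma_1}+|\xi_2|^{1/\sigma_2})}$ to the joint bound
\[
|\partial_\xi^\delta\widehat F(\xi)|\lesssim h'^{|\delta|}\delta_1!^{s_1}\delta_2!^{s_2}\,e^{-h'(|\xi_1|^{1/\sigma_1}+|\xi_2|^{1/\sigma_2})}
\]
--- is in fact equivalent (via the Fourier homeomorphism) to the statement you are proving, so the work cannot be hidden there.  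Your proposed Fa\`a di Bruno analysis of $e^{h'''(|\xi_1|^{1/\sigma_1}+|\xi_2|^{1/\sigma_2})}\widehat F$ runs into two difficulties: the weight is not smooth at $\xi_j=0$, and Leibniz produces terms mixing $\sigma_j$-Gevrey growth (from the weight) with $s_j$-Gevrey growth (from $\widehat F$), yielding only $\max(s_j,\sigma_j)$-Gevrey control, which is too weak when $s_j<\sigma_j$.  The argument in \cite{ChuChuKim} avoids this by a more direct route: once one has $|x^{\alpha}F|\lesssim A^{|\alpha|}\alpha!^{s}$ and $\|\partial^{\beta}F\|_{\infty}\lesssim B^{|\beta|}\beta!^{\sigma}$ separately, the joint estimate \eqref{GSExtCond} follows from an $L^2$ interpolation (writing $\|x^{\alpha}\partial^{\beta}F\|_{L^2}^2$ via Plancherel and integration by parts and bounding by products of the separate norms), rather than by proving your pointwise weighted bound on $\partial^\delta\widehat F$.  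Your overall architecture is correct; only the merging mechanism needs to be replaced or substantially fleshed out.
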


\par

We notice that if
$s_j+\sigma _j<1$ for some $j=1,2$, then
$\maclS _{s _1,s_2}^{\sigma _1,\sigma _2}(\rr {d_1+d_2})$
and $\Sigma _{s _1,s_2}^{\sigma _1,\sigma _2}(\rr {d_1+d_2})$
are equal to the trivial space $\{ 0\}$.
Likewise, if $s_j=\sigma _j=\frac 12$ for some $j=1,2$, then
$\Sigma _{s _1,s_2}^{\sigma _1,\sigma _2}(\rr {d_1+d_2}) = \{ 0\}$.

\par

\subsection{The short time Fourier transform and Gelfand-Shilov spaces}

\par

We recall here some basic facts about
the short-time Fourier transform and weights.

\par

Let $\phi \in \maclS _s^\sigma (\rr d)\setminus 0$ be fixed. Then the short-time
Fourier transform of $f\in (\maclS _s^\sigma )'(\rr d)$ is given by
$$
(V_\phi f)(x,\xi ) = (2\pi )^{-\frac d2}(f,\phi (\cdo -x)
e^{i\scal \cdo \xi})_{L^2}.
$$
Here $(\cdo ,\cdo )_{L^2}$ is the unique extension of the $L^2$-form on
$\maclS _s^\sigma (\rr d)$ to a continuous sesqui-linear form on $(\maclS
_s^\sigma )'(\rr d)\times \maclS _s^\sigma (\rr d)$. In the case
$f\in L^p(\rr d)$, for some $p\in [1,\infty]$, then $V_\phi f$ is given by
$$
V_\phi f(x,\xi ) \equiv (2\pi )^{-\frac d2}\int _{\rr d}f(y)\overline{\phi (y-x)}
e^{-i\scal y\xi}\, dy .
$$

\par

The following characterizations of the
$\maclS _{s_1,s_2}^{\sigma _1,\sigma _2}(\rr {d_1+d_2})$,
$\Sigma _{s_1,s_2}^{\sigma _1,\sigma _2}(\rr {d_1+d_2})$
and their duals
follow by similar arguments as in the proofs of
Propositions 2.1 and 2.2 in \cite{To15}. The details are left
for the reader.

\par

\begin{prop}\label{Prop:STFTGelfand2}
Let $s_j,\sigma _j>0$ be such that $s_j+\sigma _j\ge 1$, $j=1,2$, 
$s_0\le s$ and $\sigma_0\le \sigma$. Also let
$\phi \in \mathcal S_{s_1,s_2}^{\sigma _1,\sigma _2}(\rr {d_1+d_2})
\setminus 0$ and let $f$ be a Gelfand-Shilov distribution on
$\rr {d_1+d_2}$.
Then the following is true:
\begin{enumerate}
\item $f\in  \maclS _{s_1,s_2}^{\sigma _1,\sigma _2}(\rr {d_1+d_2})$,
if and only if
\begin{equation}\label{stftexpest2}
|V_\phi f(x_1,x_2,\xi _1,\xi _2)|
\lesssim
e^{-r (|x_1|^{\frac 1{s_1}} + |x_2|^{\frac 1{s_2}}
+|\xi _1|^{\frac 1{\sigma _1}} +|\xi _2|^{\frac 1{\sigma _2}} )},
\end{equation}
holds for some $r > 0$;

\vrum

\item if in addition
$\phi \in \Sigma _{s_1,s_2}^{\sigma _1,\sigma _2}(\rr {d_1+d_2})
\setminus 0$, then 
$f\in  \Sigma _{s_1,s_2}^{\sigma _1,\sigma _2}(\rr {d_1+d_2})$
if and only if
\begin{equation}\label{stftexpest2A}
|V_\phi f(x_1,x_2,\xi _1,\xi _2)|
\lesssim
e^{-r (|x_1|^{\frac 1{s_1}} + |x_2|^{\frac 1{s_2}}
+|\xi _1|^{\frac 1{\sigma _1}} +|\xi _2|^{\frac 1{\sigma _2}} )}
\end{equation}
holds for every $r > 0$.
\end{enumerate}
\end{prop}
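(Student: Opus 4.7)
The plan is to model the proof on Propositions 2.1--2.2 of \cite{To15}, inserting the appropriate anisotropic bookkeeping. Throughout the argument I write
$$
\psi _s(y_1,y_2)=|y_1|^{1/s_1}+|y_2|^{1/s_2},
\qquad
\psi _\sigma (\eta _1,\eta _2)=|\eta _1|^{1/\sigma _1}+|\eta _2|^{1/\sigma _2},
$$
and use the elementary inequality $|u|^{1/t}\le C_t\bigl(|u-v|^{1/t}+|v|^{1/t}\bigr)$ valid for every $t>0$, applied componentwise to obtain $\psi _s(y)\le C\bigl(\psi _s(y-x)+\psi _s(x)\bigr)$ and analogously for $\psi _\sigma$.

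For the necessity direction in (1), assume $f\in \maclS _{s_1,s_2}^{\sigma _1,\sigma _2}(\rr{d_1+d_2})$. By the preceding proposition applied to $f$ and $\phi$, there is $r_1>0$ with
$|f(y)|\lesssim e^{-r_1\psi _s(y)}$, $|\phi (y)|\lesssim e^{-r_1\psi _s(y)}$ and analogous bounds for $\widehat f,\widehat \phi $ in terms of $\psi _\sigma$. Inserting these into the integral definition of $V_\phi f$ and applying the above Peetre-type inequality,
$$
|V_\phi f(x,\xi )|\lesssim e^{-r\psi _s(x)}\int e^{-(r_1/2)(\psi _s(y)+\psi _s(y-x))}\,dy ,
$$
gives decay $e^{-r\psi _s(x)}$ in the spatial variables. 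The $\xi $-decay is obtained from the fundamental identity
$V_\phi f(x,\xi )=e^{-i\scal x\xi }V_{\widehat \phi }\widehat f(\xi ,-x)$, which, via Proposition \ref{propBroadGSSpaceChar}, lets me run the same estimate with $\widehat f ,\widehat \phi $ and $\psi _\sigma$ in place of $f,\phi $ and $\psi _s$. Combining the two yields \eqref{stftexpest2}.

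For the sufficiency direction in (1), assume \eqref{stftexpest2} and use the STFT-inversion formula
$$
f(y)=\frac {(2\pi )^{-d/2}}{\|\phi \|_{L^2}^2}\iint V_\phi f(x,\xi )\,\phi (y-x)\,e^{i\scal y\xi }\,dx\,d\xi .
$$
Together with the assumed exponential decay of $V_\phi f$ and the fast decay of $\phi $, and again the Peetre-type bound $\psi _s(y)\le C(\psi _s(y-x)+\psi _s(x))$, this yields $|f(y)|\lesssim e^{-r'\psi _s(y)}$ for some $r'>0$. The identity for $V_{\widehat \phi }\widehat f$ above produces the symmetric bound $|\widehat f(\xi )|\lesssim e^{-r'\psi _\sigma (\xi )}$. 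Appealing once more to the preceding proposition places $f$ in $\maclS _{s_1,s_2}^{\sigma _1,\sigma _2}(\rr {d_1+d_2})$. Part (2) is obtained by running the identical argument with ``some $r>0$'' replaced everywhere by ``every $r>0$'', which is legitimate because $\phi $ now lies in the smaller space $\Sigma _{s_1,s_2}^{\sigma _1,\sigma _2}$ and hence satisfies the estimate for every parameter.

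The main technical nuisance I expect is keeping track of the four independent exponents $s_1,s_2,\sigma _1,\sigma _2$ through the Peetre-type splittings: each of them comes with its own constant $C_{s_j}$ or $C_{\sigma _j}$, and in part (2) these constants cannot be absorbed once and for all. Resolving this amounts to starting from a prescribed arbitrary $r>0$, computing a suitable $r_1=r_1(r,s_1,s_2,\sigma _1,\sigma _2)$ by which to shrink the decay rate at each step, and running the same argument symmetrically in the reverse direction. This is routine but lengthy, which is presumably why the authors relegate the explicit checking to the reader.
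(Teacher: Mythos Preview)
Your proposal is correct and follows exactly the route the paper indicates: the paper gives no proof of its own but states that the result ``follow[s] by similar arguments as in the proofs of Propositions 2.1 and 2.2 in \cite{To15}'' and refers to \cite[Theorem 2.7]{GZ}, which is precisely the STFT-inversion plus fundamental-identity argument you sketch. The anisotropic bookkeeping with the four exponents and the Peetre-type splitting is indeed the only extra work, and your handling of it (including the remark about tracking constants in part (2)) is sound.
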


\par

A proof of Proposition  \ref{Prop:STFTGelfand2} can be found in
e.{\,}g. \cite{GZ} (cf. \cite[Theorem 2.7]{GZ}). The
corresponding result for Gelfand-Shilov distributions
is the following improvement of \cite[Theorem 2.5]{To11}.

\par

\begin{prop}\label{Prop:STFTGelfand2Dist}
Let $s_j,\sigma _j>0$ be such that $s_j+\sigma _j\ge 1$, $j=1,2$, 
$s_0\le s$ and $t_0\le t$. Also let
$\phi \in \mathcal S_{s_1,s_2}^{\sigma _1,\sigma _2}(\rr {d_1+d_2})
\setminus 0$ and let $f$ be a Gelfand-Shilov distribution on
$\rr {d_1+d_2}$.
Then the following is true:
\begin{enumerate}
\item $f\in  (\maclS _{s_1,s_2}^{\sigma _1,\sigma _2})'(\rr {d_1+d_2})$,
if and only if
\begin{equation}\label{stftexpest2Dist}
|V_\phi f(x_1,x_2,\xi _1,\xi _2)|
\lesssim
e^{r (|x_1|^{\frac 1{s_1}} + |x_2|^{\frac 1{s_2}}
+|\xi _1|^{\frac 1{\sigma _1}} +|\xi _2|^{\frac 1{\sigma _2}} )}
\end{equation}
holds for every $r > 0$;

\vrum

\item if in addition
$\phi \in \Sigma _{s_1,s_2}^{\sigma _1,\sigma _2}(\rr {d_1+d_2})
\setminus 0$, then 
$f\in  (\Sigma _{s_1,s_2}^{\sigma _1,\sigma _2})'(\rr {d_1+d_2})$,
if and only if
\begin{equation}\label{stftexpest2DistA}
|V_\phi f(x_1,x_2,\xi _1,\xi _2)|
\lesssim
e^{r (|x_1|^{\frac 1{s_1}} + |x_2|^{\frac 1{s_2}}
+|\xi _1|^{\frac 1{\sigma _1}} +|\xi _2|^{\frac 1{\sigma _2}} )}
\end{equation}
holds for some $r > 0$.
\end{enumerate}
\end{prop}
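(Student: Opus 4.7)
The plan is to follow the scheme of Proposition 2.2 in \cite{To15} and Theorem 2.5 in \cite{To11}, adapted to the two-block anisotropic setting; in each of (1) and (2) I will treat necessity and sufficiency separately.

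For the necessity direction of (1), I would start from the identity
\[
V_\phi f(X,\Xi ) = (2\pi )^{-\frac {d_1+d_2}2}\bigl (f,M_\Xi T_X\phi \bigr ),
\]
with $X=(x_1,x_2)$, $\Xi =(\xi _1,\xi _2)$, $T_X\phi (y)=\phi (y-X)$, $M_\Xi \phi (y)=e^{i\scal y\Xi}\phi (y)$, and the pairing being the continuous extension of $(\, \cdo \, ,\, \cdo \, )_{L^2}$ to the Gelfand-Shilov duality. The projective-limit presentation $(\ref{GSspacecond1})'$ produces, for every $h>0$, a constant $C_h>0$ with $|(f,u)|\le C_h\nm u{\maclS _{s_1,s_2;h}^{\sigma _1,\sigma _2}}$. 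The main computation is to show that, given $r>0$, one can choose $h$ and a constant $C=C(\phi ,h)$ so that
\[
\nm{M_\Xi T_X\phi}{\maclS _{s_1,s_2;h}^{\sigma _1,\sigma _2}} \le C\, e^{r(|x_1|^{1/s_1}+|x_2|^{1/s_2}+|\xi _1|^{1/\sigma _1}+|\xi _2|^{1/\sigma _2})}.
\]
This is done by expanding $\partial _y^\beta \bigl (e^{i\scal y\Xi}\phi (y-X)\bigr )$ via Leibniz, writing $y=(y-X)+X$ in the polynomial factor $y^\alpha$, invoking the decay estimate $|\partial ^\gamma \phi (z)|\lesssim h_0^{|\gamma |}\gamma _1!^{\sigma _1}\gamma _2!^{\sigma _2}e^{-r_0(|z_1|^{1/s_1}+|z_2|^{1/s_2})}$ available for $\phi$, and converting the remaining polynomial factors in $X$ and $\Xi$ into exponentials through the elementary inequality $|\xi |^k\lesssim C^k k!^\sigma e^{r|\xi |^{1/\sigma}}$. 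Taking $h$ large absorbs all spurious $C^{|\alpha +\beta |}$-factors and makes $r$ arbitrarily small, hence (\ref{stftexpest2Dist}) holds for every $r>0$.

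For the sufficiency direction of (1), I would use the Moyal-type reconstruction
\[
(f,g) = \nm{\phi}{L^2}^{-2}\iint V_\phi f(X,\Xi )\, \overline{V_\phi g(X,\Xi )}\, dX\, d\Xi
\]
as an ansatz for the pairing. For $g\in \maclS _{s_1,s_2;h}^{\sigma _1,\sigma _2}$, a uniform version of Proposition \ref{Prop:STFTGelfand2}(1) yields a rate $r_0=r_0(h)>0$ and a constant $C$ with $|V_\phi g(X,\Xi )|\le C\nm g{\maclS _{s_1,s_2;h}^{\sigma _1,\sigma _2}}e^{-r_0(|x_1|^{1/s_1}+\cdots +|\xi _2|^{1/\sigma _2})}$. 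Applying the hypothesis (\ref{stftexpest2Dist}) with any $r<r_0(h)$ gives absolute convergence of the integral and the bound $|(f,g)|\le C'_h\nm g{\maclS _{s_1,s_2;h}^{\sigma _1,\sigma _2}}$; hence $f\in (\maclS _{s_1,s_2;h}^{\sigma _1,\sigma _2})'$ for every $h>0$, which by $(\ref{GSspacecond1})'$ means $f\in (\maclS _{s_1,s_2}^{\sigma _1,\sigma _2})'$. Part (2) is handled by the same argument with the roles of \emph{for every $r$} and \emph{for some $r$} swapped, using Proposition \ref{Prop:STFTGelfand2}(2) to produce decay of $V_\phi g$ at arbitrarily large rate.

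The main obstacle is the precise book-keeping in the necessity step: one must track how the rate $r$ in the bound on $\nm{M_\Xi T_X\phi}{\maclS _{s_1,s_2;h}^{\sigma _1,\sigma _2}}$ shrinks as $h$ grows, so that the growth estimate (\ref{stftexpest2Dist}) can indeed hold for every $r>0$ in (1) and uniformly for a single $r$ in (2). The anisotropy itself introduces no essential new difficulty, because the blocks $(x_j,\xi _j)$, $j=1,2$, decouple throughout the argument, provided the factorials $\alpha _j!^{s_j}\beta _j!^{\sigma _j}$ are kept separate when applying Leibniz's rule and the polynomial-to-exponential inequality.
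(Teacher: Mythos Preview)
The paper does not supply its own proof of this proposition: immediately before Proposition~\ref{Prop:STFTGelfand2} it states that the characterizations ``follow by similar arguments as in the proofs of Propositions 2.1 and 2.2 in \cite{To15}. The details are left for the reader,'' and after Proposition~\ref{Prop:STFTGelfand2} it describes Proposition~\ref{Prop:STFTGelfand2Dist} as an ``improvement of \cite[Theorem~2.5]{To11}.'' Your proposal follows precisely this indicated route, and the outline is correct.

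A couple of remarks on the details you flag. In the necessity step the quantitative trade-off you identify is exactly right: the elementary inequality $|\xi _j|^{k}\le C_r^{\,k}k!^{\sigma _j}e^{r|\xi _j|^{1/\sigma _j}}$ has $C_r\asymp (\sigma _j/r)^{\sigma _j}$, so in part~(1) taking $h\ge C_r+h_0$ (with $h_0$ the parameter of $\phi$) lets $r$ be arbitrarily small, while in part~(2) the fixed $h$ coming from $f\in (\Sigma _{s_1,s_2}^{\sigma _1,\sigma _2})'$ forces some large $r$, which is admissible since $\phi \in \Sigma _{s_1,s_2}^{\sigma _1,\sigma _2}$ allows $h_0$ arbitrarily small. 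In the sufficiency step the Moyal identity is legitimate because $f$ is assumed a priori to be a Gelfand--Shilov distribution, so the pairing $(f,g)$ already has a meaning and the integral formula recovers it; the quantitative version of Proposition~\ref{Prop:STFTGelfand2} you invoke (rate $r_0(h)$ uniform over the unit ball of $\maclS _{s_1,s_2;h}^{\sigma _1,\sigma _2}$) is indeed what the proofs in \cite{To15,GZ} yield.
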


\par

A function $\omega$ on $\rr d$ is called a \emph{weight} or
\emph{weight function},
if $\omega ,1/\omega \in L^\infty _{\loc} (\rr d)$
are positive everywhere. It is often assumed that $\omega$ is $v$-moderate
for some positive function $v$ on $\rr d$. This means that
\begin{equation}\label{vModerate}
\omega (x+y)\lesssim \omega (x)v(y),\quad x,y\in \rr d.
\end{equation}
If $v$ is even and satisfies \eqref{vModerate} with $\omega =v$,
then $v$ is called submultiplicative. 

\par

For any $s>0$, let
$\mascP _s(\rr d)$ ($\mascP _s^0(\rr d)$) be the set of all
weights $\omega$ on $\rr d$ such that
$$
e^{-r|x|^{\frac 1s}}\lesssim \omega (x)\lesssim e^{r|x|^{\frac 1s}}
$$
for some $r>0$ (for every $r>0$). In similar ways, if $s,\sigma >0$, then
$\mascP _{s,\sigma}(\rr {2d})$ ($\mascP _{s,\sigma}^0(\rr {2d})$) consists of all
submultiplicative weight functions $\omega$ on $\rr {2d}$ such that
$$
e^{-r(|x|^{\frac 1s}+|\xi |^{\frac 1\sigma})}\lesssim \omega (x,\xi )
\lesssim e^{r(|x|^{\frac 1s}+|\xi |^{\frac 1\sigma})}
$$
for some $r>0$ (for every $r>0$).
In particular, if $\omega \in \mascP _{s,\sigma}(\rr {2d})$ ($\mascP _{s,\sigma}^0(\rr {2d})$), then 
\begin{equation} \label{estomega}
\omega (x+y,\xi +\eta ) \lesssim \omega (x,\xi )e^{r(|y|^{\frac 1s}
+|\eta |^{\frac 1\sigma})}, \quad  x,y,\xi ,\eta  \in \rr d,
\end{equation}
for some $r>0$ (for every $r>0$).

\par

\par

\subsection{Pseudo-differential operators}

\par

Let  $A \in \MdR$ and $s\ge \frac 12$ be fixed, and let $a\in \maclS _{s}
(\rr {2d})$. Then the
\emph{pseudo-differential operator} $\op _A(a)$
with symbol $a$ is the continuous operator on $\maclS _{s} (\rr d)$,
defined by the formula
\begin{equation}\label{e0.5}
(\op _A(a)f)(x)
=
(2\pi  ) ^{-d}\iint a(x-A(x-y),\xi )f(y)e^{i\scal {x-y}\xi }\,
dyd\xi .
\end{equation}
We set $\op _t(a)=\op _A(a)$ when $t\in \mathbf R$, $A=t\cdot I$
and $I$ is the identity matrix, and notice that this definition agrees with
the Shubin type pseudo-differential operators (cf. e.{\,}g. \cite{To7}).

\par

If instead $a\in
(\maclS _{s,\sigma}^{\sigma ,s})'(\rr {2d})$, then $\op _A(a)$ is
defined to be the continuous
operator from $\maclS _{s}^\sigma (\rr d)$ to
$(\maclS _{s}^\sigma )'(\rr d)$ with
the kernel in $(\maclS _{s}^\sigma )'(\rr {2d})$, given by
\begin{equation}\label{KaADef}
K_{a,A}(x,y) \equiv (\mascF _2^{-1}a)(x-A(x-y),x-y).
\end{equation}
It is easily seen that the latter definition agrees with \eqref{e0.5}$'$
when $a\in L^1(\rr {2d})$.

\par

If $t=\frac 12$, then $\op _t(a)$ is equal to the Weyl
operator $\op ^w(a)$ for $a$. If instead $t=0$, then the standard
(Kohn-Nirenberg) representation $a(x,D)$ is obtained.

\par

\subsection{Symbol classes}

\par

Next we introduce function spaces related to symbol classes
of the pseudo-differential operators. These functions should obey various
conditions of the form
\begin{align}
|\partial _x^\alpha \partial _\xi ^\beta a(x,\xi )|
&\lesssim
h ^{|\alpha +\beta |}\alpha !^\sigma \beta !^s  \omega (x,\xi ),
\label{Eq:symbols2}
\end{align}
for functions on the phase space $\rr {2d}$. For this reason we consider
semi-norms of the form
\begin{equation}\label{Eq:GammaomegaNorm}
\nm a{\Gamma _{(\omega )}^{\sigma ,s;h}}
\equiv \sup _{\alpha ,\beta \in \nn d}
\left (
\sup _{x,\xi \in \rr d} \left (
\frac {|\partial _x^\alpha \partial _\xi ^\beta a(x,\xi )|}{
h ^{|\alpha +\beta |}\alpha !^\sigma \beta !^s \omega (x,\xi )}
\right ) \right ) ,
\end{equation}
indexed by $h>0$,

\par

\begin{defn}\label{Def:GammaSymb2}
Let $s$, $\sigma$ and $h$ be positive constants,
let $\omega$ be a weight on $\rr {2d}$, and let
$$
\omega _r(x,\xi )\equiv e^{r(|x|^{\frac 1s} + |\xi |^{\frac 1\sigma })}   .
$$ 
\begin{enumerate}
\item The set $\Gamma _{(\omega )}^{\sigma ,s;h}  (\rr {2d})$
consists of
all $a \in C^\infty(\rr {2d})$ such that
$\nm a{\Gamma _{(\omega )}^{\sigma ,s;h}}$ in
\eqref{Eq:GammaomegaNorm} is finite.
The set $\Gamma _0^{\sigma ,s;h}  (\rr {2d})$ consists of
all $a \in C^\infty(\rr {2d})$ such that $\nm a{\Gamma _{(\omega_r )}^{\sigma ,s;h}}$ is finite
for every $r>0$, and the topology is the projective limit topology of
$\Gamma _{(\omega _r)}^{\sigma ,s;h}  (\rr {2d})$ with respect to $r>0$;

\vrum

\item The sets $\Gamma _{(\omega )}^{\sigma ,s}  (\rr {2d})$ and
$\Gamma _{(\omega )}^{\sigma ,s;0}  (\rr {2d})$ are given by
$$
\Gamma _{(\omega )}^{\sigma ,s}  (\rr {2d})
\equiv
\bigcup _{h>0}\Gamma _{(\omega )}^{\sigma ,s;h}  (\rr {2d})
\quad \text{and}\quad
\Gamma _{(\omega )}^{\sigma ,s;0}  (\rr {2d})
\equiv
\bigcap _{h>0}\Gamma _{(\omega )}^{\sigma ,s;h}  (\rr {2d}),
$$
and their topologies are the inductive respective the projective topologies
of $\Gamma _{(\omega )}^{\sigma ,s;h}  (\rr {2d})$ with respect to $h>0$.
\end{enumerate}
\end{defn}

\par

Furthermore we have the following classes.

\par

\begin{defn}\label{Def:ExtGSSymbClasses}
For $s_j,\sigma_j\ge 0$, $j=1,2$, and $h,r>0$ 
and $f\in C^\infty(\rr {d_1+d_2})$, let

\begin{equation}\label{symbols}
   \nm{f}{(h,r)}\equiv \sup \left (\frac{\abp{\partial _{x_1}
    ^{\alpha_1}\partial _{x_2}^{\alpha_2}f(x_1,x_2)}}
    {h^{\abp{\alpha_1+\alpha_2}}\alpha_1!^{\sigma _1}
    \alpha_2!^{\sigma _2}e^{r(\abp{x_1}^{\frac 1{s_1}}
    +\abp{x_2}^{\frac 1{s_2}})}} \right ), 
\end{equation}
where the supremum is taken over all $\alpha_1\in \nn {d_1},\alpha_2
\in \nn {d_2},x_1\in \rr {d_1}$ and $x_2\in \rr {d_2}$.
\begin{enumerate}
    \item $\Gamma _{s_1,s_2} ^{\sigma_1,\sigma_2} (\rr {d_1+d_2})$
    consists of all $f\in C^\infty(\rr {d_1+d_2})$ such that 
    $\nm{f}{(h,r)}$ is finite for some $h,r>0$;    
    
    \vrum
    
    \item $\Gamma _{s_1,s_2;0} ^{\sigma_1,\sigma_2} (\rr {d_1+d_2})$
    consists of all $f\in C^\infty(\rr {d_1+d_2})$ such that for some
    $h>0$,
    $\nm{f}{(h,r)}$ is finite for every $r>0$;
    
    \vrum
    
    \item $\Gamma _{s_1,s_2} ^{\sigma_1,\sigma_2;0} (\rr {d_1+d_2})$
    consists of all $f\in C^\infty(\rr {d_1+d_2})$ such that for some
    $r>0$, $\nm{f}{(h,r)}$ is finite for every $h>0$;
    
    \vrum
    
    \item $\Gamma _{s_1,s_2;0} ^{\sigma_1,\sigma_2;0} (\rr {d_1+d_2})$ 
    consists of all
    $f\in C^\infty(\rr {d_1+d_2})$ such that $\nm{f}{(h,r)}$ is
    finite for every $h,r>0$.
\end{enumerate}
\end{defn}

\par

In order to define suitable topologies of the spaces in Definition
\ref{Def:ExtGSSymbClasses}, let $(\Gamma _{s_1,s_2} ^{\sigma_1,\sigma_2})_{(h,r)}
(\rr {d_1+d_2})$ be the set of
$f\in C^\infty(\rr {d_1+d_2})$ such that $\nm{f}{(h,r)}$ is finite. Then
$(\Gamma _{s_1,s_2} ^{\sigma_1,\sigma_2})_{(h,r)} (\rr {d_1+d_2})$ is a
Banach space, and the sets in Definition \ref{Def:ExtGSSymbClasses} are given
by
        \begin{align*}
            \Gamma _{s_1,s_2} ^{\sigma_1,\sigma_2} (\rr {d_1+d_2})
            &=
            \bigcup_{h,r>0}(\Gamma _{s_1,s_2}
            ^{\sigma_1,\sigma_2})_{(h,r)} (\rr {d_1+d_2});
            \\
            \Gamma _{s_1,s_2;0} ^{\sigma_1,\sigma_2} (\rr {d_1+d_2})
            &=
            \bigcup_{h>0}\left(\bigcap_{r>0}
            (\Gamma _{s_1,s_2} ^{\sigma_1,\sigma_2})_{(h,r)} (\rr {d_1+d_2})\right);
            \\
            \Gamma _{s_1,s_2} ^{\sigma_1,\sigma_2;0} (\rr {d_1+d_2})
            &=
            \bigcup_{r>0}\left(\bigcap_{h>0}(\Gamma _{s_1,s_2} 
            ^{\sigma_1,\sigma_2})_{(h,r)} (\rr {d_1+d_2})\right)
            \intertext{ and }
            \Gamma _{s_1,s_2;0} ^{\sigma_1,\sigma_2;0} (\rr {d_1+d_2})
            &=
            \bigcap_{h,r>0}(\Gamma _{s_1,s_2}
            ^{\sigma_1,\sigma_2})_{(h,r)} (\rr {d_1+d_2}),
        \end{align*}
and we equip these spaces by suitable mixed inductive and projective limit topologies
of $(\Gamma _{s_1,s_2} ^{\sigma_1,\sigma_2})_{(h,r)} (\rr {d_1+d_2})$.

\par

In Appendix A we show some further continuity results of the symbol classes
in Definition \ref{Def:ExtGSSymbClasses}.

\par

\section{The short-time Fourier transform and
regularity}\label{sec2}

\par

In this section we deduce equivalences between conditions on
the short-time Fourier transforms of functions or distributions and estimates
on derivatives. 

\par

In what follows we let $\kappa$ be defined as
\begin{equation} \label{kappadef}
\kappa (r) =
\begin{cases}
1\quad &\text{when}\ r\le 1,
\\[1ex]
2^{r-1}\quad &\text{when}\ r> 1.
\end{cases}
\end{equation}

\par

In the sequel we shall frequently use the well known inequality
$$
|x+y|^{\frac 1s} \leq \kappa(s^{-1}) (|x|^{\frac 1s} + |y|^{\frac 1s}), \qquad s >0,\quad
x,y\in \rr d.
$$

\par

\begin{prop}\label{prop1}
Let $s,\sigma >0$ be such that $s+\sigma \ge 1$ and $(s,\sigma )\neq (\frac 12,\frac 12)$,
$\phi \in \Sigma _s^\sigma (\rr d)\setminus 0$, $r>0$ and let
$f$ be a Gelfand-Shilov distribution on $\rr d$. Then the following is true:
\begin{enumerate}
\item If $f\in C^\infty (\rr d)$ and satisfies
\begin{equation}\label{GelfRelCond1}
|\partial ^\alpha f(x)|\lesssim h ^{|\alpha |}\alpha !^\sigma e^{r|x|^{\frac 1s}},
\end{equation}
for every $h >0$ (resp. for some $h >0$), then
\begin{equation}\label{stftcond1}
|V_\phi f(x,\xi )|\lesssim e^{\kappa (s^{-1})r|x|^{\frac 1s}-h |\xi |^{\frac 1\sigma }},
\end{equation}
for every $h >0$ (resp. for some new $h>0$);

\vrum

\item If 
\begin{equation}\label{GelfRelCond1A}
|V_\phi f(x,\xi )|\lesssim e^{r|x|^{\frac 1s}-h |\xi |^{\frac 1\sigma}},
\end{equation}
for every $h>0$ (resp. for some $h>0$), then $f\in C^\infty (\rr d)$ and satisfies
$$
|\partial ^\alpha f(x)|\lesssim h ^{|\alpha |}\alpha !^\sigma e^{\kappa (s^{-1})r|x|^{\frac 1s}},
$$
for every $h>0$ (resp. for some new $h>0$).
\end{enumerate}
\end{prop}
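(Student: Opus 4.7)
My plan is to pass between $f$ and $V_\phi f$ in both directions by direct calculation, using integration by parts for (1) and the STFT inversion formula for (2). The hypothesis $\phi\in\Sigma_s^\sigma(\rr d)$ is crucial, since it provides bounds $|\partial^\gamma\phi(z)|\lesssim h_0^{|\gamma|}\gamma!^\sigma e^{-r_0|z|^{\frac 1s}}$ for \emph{every} $h_0,r_0>0$ simultaneously; the restrictions $s+\sigma\ge 1$ and $(s,\sigma)\neq(\tfrac 12,\tfrac 12)$ are precisely the non-triviality conditions that make such a $\phi$ available.

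For (1), start from
\begin{equation*}
V_\phi f(x,\xi ) = (2\pi )^{-\frac d2}\int f(y)\overline{\phi (y-x)}e^{-i\scal y\xi}\,dy,
\end{equation*}
rewrite $\xi^\alpha e^{-i\scal y\xi}=(-D_y)^\alpha e^{-i\scal y\xi}$, integrate by parts, and expand $D_y^\alpha[f(y)\overline{\phi(y-x)}]$ by the Leibniz rule. Substitute \eqref{GelfRelCond1} and the $\Sigma_s^\sigma$-bound on $\partial^{\alpha-\beta}\phi(y-x)$, then invoke the inequality $|y|^{\frac 1s}\le\kappa(s^{-1})(|x|^{\frac 1s}+|y-x|^{\frac 1s})$ to factor out $e^{\kappa(s^{-1})r|x|^{\frac 1s}}$. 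Choosing $r_0>\kappa(s^{-1})r$ makes the $y$-integral uniformly convergent. Using $\beta!^\sigma(\alpha-\beta)!^\sigma\le\alpha!^\sigma$ and the binomial sum $\sum_{\beta\le\alpha}\binom{\alpha}{\beta}=2^{|\alpha|}$, one obtains
\begin{equation*}
|\xi^\alpha V_\phi f(x,\xi)|\lesssim H^{|\alpha|}\alpha!^\sigma e^{\kappa(s^{-1})r|x|^{\frac 1s}},
\end{equation*}
where $H$ depends linearly on the constants from $f$ and $\phi$. A Stirling-type infimum over $|\alpha|=N$, $\inf_{N}H^{N}N!^\sigma/|\xi|^{N}\asymp e^{-c_{H}|\xi|^{\frac 1\sigma}}$, yields \eqref{stftcond1}. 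In the ``every $h$'' case both $h$ and the window constant $h_0$ can be taken arbitrarily small, so $c_{H}$ can be made arbitrarily large; in the ``some $h$'' case one obtains some positive $c_{H}$.

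For (2), I invoke the STFT inversion formula
\begin{equation*}
f(y)=(2\pi)^{-\frac d2}(\phi,\phi)_{L^2}^{-1}\iint V_\phi f(x,\xi)\phi(y-x)e^{i\scal y\xi}\,dx\,d\xi ,
\end{equation*}
differentiate under the integral sign, and expand $\partial_y^\alpha[\phi(y-x)e^{i\scal y\xi}]$ by Leibniz. The factor $|\xi|^{|\alpha-\beta|}$ is absorbed using \eqref{GelfRelCond1A} via the elementary estimate $|\xi|^{N}\le C_{\ep}^{N}N!^\sigma e^{\ep|\xi|^{\frac 1\sigma}}$ (a standard consequence of the Taylor series of $e^{\ep|\xi|^{1/\sigma}}$ combined with Stirling); choosing $\ep<h$ leaves an integrable $\xi$-tail. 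The $x$-integral is treated symmetrically to (1): combine the $\Sigma_s^\sigma$-bound on $\partial^\beta\phi(y-x)$ with $|x|^{\frac 1s}\le\kappa(s^{-1})(|y|^{\frac 1s}+|y-x|^{\frac 1s})$, and pick $r_0>\kappa(s^{-1})r$. Collecting factorials via $\beta!^\sigma(\alpha-\beta)!^\sigma\le\alpha!^\sigma$ yields the claimed pointwise bound on $|\partial^\alpha f(y)|$, with the ``for every'' versus ``for some'' $h$ dichotomy transferring from $V_\phi f$ to $f$ by the same mechanism as in part (1).

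The principal obstacle is the constant bookkeeping: one must verify that the composite constant $H$ in the factorial bound can be made arbitrarily small in the ``every $h$'' branch while still ensuring convergence of both the $y$- and $\xi$-integrals. This hinges on the ability to tune the window constants $h_0$ and $r_0$ for $\phi$ \emph{independently} of the hypotheses on $f$ or $V_\phi f$, which is exactly the gain provided by taking $\phi\in\Sigma_s^\sigma(\rr d)$ rather than in the larger space $\maclS_s^\sigma(\rr d)$.
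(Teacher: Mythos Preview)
Your argument is correct and, for part (2), essentially identical to the paper's: both use the inversion formula, differentiate under the integral, expand by Leibniz, absorb the monomial in $\xi$ via $|\xi|^N\lesssim C_\ep^N N!^\sigma e^{\ep|\xi|^{1/\sigma}}$, and handle the spatial integral through $|x|^{1/s}\le\kappa(s^{-1})(|y|^{1/s}+|x-y|^{1/s})$ together with the freedom to choose the window decay rate $r_0$ as large as needed.

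For part (1) the paper takes a slightly more conceptual route. Rather than integrating by parts and then running a Stirling-type infimum over $|\alpha|$, it sets $F_x(y):=f(y+x)\overline{\phi(y)}$, checks directly that the family $\{e^{-\kappa(s^{-1})r|x|^{1/s}}F_x\}_{x\in\rr d}$ is bounded in $\Sigma_s^\sigma(\rr d)$, and then invokes the known continuity of the Fourier transform from $\Sigma_s^\sigma$ to $\Sigma_\sigma^s$ to conclude that $|\widehat{F_x}(\xi)|=|V_\phi f(x,\xi)|$ has the required exponential decay in $\xi$. Your integration-by-parts computation followed by the optimization $\inf_N H^N N!^\sigma|\xi|^{-N}\asymp e^{-c_H|\xi|^{1/\sigma}}$ is exactly the calculation that underlies that Fourier mapping property, so the content is the same; the paper's version is shorter because it treats this step as a black box, while yours is more self-contained and makes the dependence of the output constant on $h$ and $h_0$ explicit.
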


\par


\par

\begin{proof}
We only prove the assertion when \eqref{GelfRelCond1} or \eqref{GelfRelCond1A}
are true for every $h>0$, leaving the straight-forward modifications of the
other cases to the reader. 

\par

Assume that \eqref{GelfRelCond1} holds. Then for every $x \in \rr d$ the function
$$
y\mapsto F_x(y)\equiv f(y+x)\overline{\phi (y)}
$$
belongs to $\Sigma _s^\sigma(\rr d)$, and
$$
|\partial _y^\alpha F_x(y)|
\lesssim
h^{|\alpha |}\alpha !^\sigma 
e^{\kappa (s^{-1})r|x|^{\frac 1s}}e^{-r_0|y|^{\frac 1s}},
$$
for every $h , r_0 >0$. In particular,
\begin{equation}\label{FxEsts}
|F_x(y)|\lesssim e^{\kappa (s^{-1})r|x|^{\frac 1s}}
e^{-r_0|y|^{\frac 1s}}
\quad \text{and}\quad
|\widehat F_x(\xi )|\lesssim e^{\kappa (s^{-1})r|x|^{\frac 1s}}
e^{-r_0|\xi |^{\frac 1\sigma}},
\end{equation}
for every $r_0>0$. Since $|V_\phi f(x,\xi )| = |\widehat F_x(\xi )|$, the estimate
\eqref{stftcond1} follows from the second inequality in \eqref{FxEsts}, and (1) follows.

\par

Next we prove (2). By the inversion formula we get
\begin{equation}\label{Eq:STFTInversionFormula}
f(x) = (2\pi)^{-\frac d2}
\nm \phi{L^2}^{-2} \iint _{\rr {2d}} V_\phi f(y,\eta )
\phi (x-y)e^{i\scal x\eta}\, dyd\eta .
\end{equation}
Here we notice that
\begin{align*}
(x,y,\eta ) &\mapsto V_\phi f(y,\eta )\phi (x-y)e^{i\scal x\eta}
\intertext{is smooth and}
(y,\eta ) &\mapsto \eta ^\alpha V_\phi f(y,\eta )\partial ^\beta \phi
(x-y)e^{i\scal x\eta}
\end{align*}
is an integrable function for every $x$, $\alpha$ and $\beta$, giving that $f$ in
\eqref{Eq:STFTInversionFormula} is smooth.

\par

By differentiation and the fact that $\phi \in \Sigma _s^\sigma$ we get
\begin{multline*}
|\partial ^\alpha f(x)| \asymp \left |
\sum _{\beta \le \alpha} {\alpha \choose \beta} i^{|\beta|}
\iint _{\rr {2d}} \eta ^\beta V_\phi f(y,\eta )  (\partial ^{\alpha -\beta }\phi )(x-y)
e^{i\scal x\eta}\, dyd\eta
\right |
\\[1ex]
\le
\sum _{\beta \le \alpha} {\alpha \choose \beta}
\iint _{\rr {2d}} |\eta ^\beta V_\phi f(y,\eta )  (\partial ^{\alpha -\beta }\phi )(x-y)|
\, dyd\eta
\\[1ex]
\lesssim
\sum _{\beta \le \alpha} {\alpha \choose \beta}
\iint _{\rr {2d}} |\eta ^\beta e^{r |y|^{\frac 1s}}e^{-h |\eta |^{\frac 1\sigma}} 
(\partial ^{\alpha -\beta }\phi )(x-y)|
\, dyd\eta
\\[1ex]
\lesssim
\sum _{\beta \le \alpha} {\alpha \choose \beta}
h_2 ^{|\alpha - \beta |} (\alpha -\beta )!^\sigma
\iint _{\rr {2d}} |\eta ^\beta |e^{-h |\eta |^{\frac 1\sigma}}e^{r |y|^{\frac 1s}}
e^{-h_1|x-y|^{\frac 1s}} \, dyd\eta ,
\end{multline*}
for every $h_1>0$ and $h_2>0$. Since
\begin{equation}\label{Eq:StirlingTypeRel}
|\eta ^\beta e^{-h |\eta |^{\frac 1\sigma }}|\lesssim h_2^{|\beta|}(\beta !)^\sigma
e^{-\frac h2 \cdot  |\eta |^{\frac 1\sigma}},
\end{equation}
we get
\begin{multline}\label{Eq:GSTypeEst1}
|\partial ^\alpha f(x)|
\\[1ex]
\lesssim h_2 ^{|\alpha |}
\sum _{\beta \le \alpha} {\alpha \choose \beta}
(\beta !
(\alpha -\beta )!)^\sigma \iint _{\rr {2d}} e^{-\frac h2 \cdot  |\eta |^{\frac 1\sigma}}
e^{r |y|^{\frac 1s}}
e^{-h_1|x-y|^{\frac 1s}} \, dyd\eta
\\[1ex]
\lesssim (2^{1-s}h_2)^{|\alpha |}(\alpha !)^\sigma \int _{\rr n} e^{r |y|^{\frac 1s}}
e^{-h_1|x-y|^{\frac 1s}} \, dy .
\end{multline}

\par

Since $|y|^{\frac 1s}\le \kappa (s^{-1})(|x|^{\frac 1s}+|y-x|^{\frac 1s})$
and $h_1$ can be chosen
arbitrarily large, it follows from the last estimate that
$$
|\partial ^\alpha f(x)| \lesssim (2h_2)^{|\alpha |}(\alpha !)^\sigma
e^{r \kappa (s^{-1})|x|^{\frac 1s}},
$$
for every $h_2>0$. This gives the result.
\end{proof}

\par

By similar arguments we get the following result. The details
are left for the reader.

\par

\renewcommand{\rubrik}{Proposition \ref{prop1}$'$}

\par

\begin{tom}
Let $s_j,\sigma _j>0$ be such that $s_j+\sigma _j\ge 1$
and $(s_j,\sigma _j)\neq (\frac 12,\frac 12)$, $j=1,2$,
$\phi \in \Sigma _{s_1,s_2}^{\sigma _1,\sigma _2}
(\rr {d_1+d_2})\setminus 0$, $r>0$ and let
$f$ be a Gelfand-Shilov distribution on $\rr {d_1+d_2}$.
Then the following is true:
\begin{enumerate}
\item If $f\in C^\infty (\rr {d_1+d_2})$ and satisfies
\begin{equation}\tag*{(\ref{GelfRelCond1})$'$}
|\partial _{x_1}^{\alpha _1}\partial _{x_2}^{\alpha _2}
f(x_1,x_2)|\lesssim h ^{|\alpha _1+\alpha _2|}
\alpha _1!^{\sigma _1}\alpha _2!^{\sigma _2}
e^{r(|x_1|^{\frac 1{s_1}}+|x_2|^{\frac 1{s_2}}) },
\end{equation}
for every $h >0$ (resp. for some $h >0$), then
\begin{equation}\tag*{(\ref{stftcond1})$'$}
|V_\phi f(x_1,x_2,\xi _1,\xi _2)|
\lesssim e^{\kappa (s_1^{-1})r|x_1|^{\frac 1{s_1}}
+\kappa (s_2^{-1})r|x_2|^{\frac 1{s_2}}
-h (|\xi _1|^{\frac 1{\sigma _1}} +
|\xi _2|^{\frac 1{\sigma _2}}) },
\end{equation}
for every $h >0$ (resp. for some new $h>0$);

\vrum

\item If 
\begin{equation}\tag*{(\ref{GelfRelCond1A})$'$}
|V_\phi f(x_1,x_2,\xi _1,\xi _2)|
\lesssim e^{r(|x_1|^{\frac 1{s_1}}
+|x_2|^{\frac 1{s_2}})
-h (|\xi _1|^{\frac 1{\sigma _1}} +
|\xi _2|^{\frac 1{\sigma _2}}) },
\end{equation}
for every $h>0$ (resp. for some $h>0$), then
$f\in C^\infty (\rr {d_1+d_2})$ and satisfies
$$
|\partial _{x_1}^{\alpha _1}\partial _{x_2}^{\alpha _2}
f(x_1,x_2)|\lesssim h ^{|\alpha _1+\alpha _2|}
\alpha _1!^{\sigma _1}\alpha _2!^{\sigma _2}
e^{\kappa (s_1^{-1})r|x_1|^{\frac 1{s_1}}
+
\kappa (s_2^{-1})r|x_2|^{\frac 1{s_2}} },
$$
for every $h>0$ (resp. for some new $h>0$).
\end{enumerate}
\end{tom}

\par

\par

As a consequence of the previous result we get the following.

\par

\begin{prop}\label{prop2}
Let $s_j,\sigma _j>0$ be such that $s_j+\sigma _j\ge 1$ and
$(s_j,\sigma _j)\neq (\frac 12,\frac 12)$, $j=1,2$,
$\phi \in \Sigma _{s_1,s_2}^{\sigma _1,\sigma _2}
(\rr {d_1+d_2})\setminus 0$ and let
$f$ be a Gelfand-Shilov distribution on $\rr {d_1+d_2}$. 
Then the following is true:
\begin{enumerate}
\item there exist $h,r>0$ such that \eqref{GelfRelCond1A}$'$
holds, if and only if $f\in \Gamma _{s_1,s_2}^{\sigma _1,\sigma _2}
(\rr {d_1+d_2})$;

\vrum

\item there exists $r>0$ such that \eqref{GelfRelCond1A}$'$
holds for every $h>0$, if and only if $f\in
\Gamma _{s_1,s_2}^{\sigma _1,\sigma _2;0} (\rr {d_1+d_2})$;

\vrum

\item \eqref{GelfRelCond1A}$'$ holds
for every $h,r>0$, if and only if $f\in
\Gamma _{s_1,s_2;0}^{\sigma _1,\sigma _2;0} (\rr {d_1+d_2})$.
\end{enumerate}
\end{prop}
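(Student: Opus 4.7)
The plan is to deduce all three equivalences directly from Proposition~\ref{prop1}$'$, which already characterizes the derivative-side bound \eqref{GelfRelCond1}$'$ in terms of the STFT-side bound \eqref{GelfRelCond1A}$'$ for the two extremal quantifier settings on the Gevrey parameter $h$ (``for every $h>0$'' and ``for some $h>0$''). Since the symbol classes in Definition~\ref{Def:ExtGSSymbClasses} are precisely distinguished by the four admissible choices of quantifiers on the two independent parameters $h$ and $r$, each item (1)--(3) of the present proposition reduces to matching quantifiers on the two sides of Proposition~\ref{prop1}$'$.

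For the forward direction of each item I would invoke Proposition~\ref{prop1}$'$(2). In case (1), the STFT bound is assumed for some fixed $h,r>0$, and the existential branch in $h$ yields the derivative bound \eqref{GelfRelCond1}$'$ for some new $h'>0$ with $r$ replaced by $\kappa(s_j^{-1})r$, which is still some positive $r'$; hence $f\in \Gamma_{s_1,s_2}^{\sigma_1,\sigma_2}$. In case (2), I would apply the universal branch in $h$ with $r$ fixed, obtaining the derivative bound for every $h>0$ and some $r'=\max_j\kappa(s_j^{-1})\,r$, which places $f$ in $\Gamma_{s_1,s_2}^{\sigma_1,\sigma_2;0}$. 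Case (3) is identical to (2) but with arbitrary $r$, so that arbitrariness in $r$ passes to the derivative side and yields $f\in \Gamma_{s_1,s_2;0}^{\sigma_1,\sigma_2;0}$.

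For the converse direction I would apply Proposition~\ref{prop1}$'$(1) in the mirror pattern, translating the derivative bounds defining the symbol class back into STFT bounds; again the fixed multiplicative constants $\kappa(s_j^{-1})$ in the $|x_j|^{1/s_j}$-exponent do not affect the universal/existential quantifier structure in $r$, and the $h$-quantifiers are preserved verbatim by the statement of Proposition~\ref{prop1}$'$(1). The only subtlety is pure bookkeeping: one must remember the convention that the ``$;0$'' on the subscript $s_j$ encodes universality in the growth parameter $r$, while the ``$;0$'' on the superscript $\sigma_j$ encodes universality in the Gevrey parameter $h$, and match these against the STFT parameters in the natural way ($h$ governs $\xi$-decay, $r$ governs $x$-growth). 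Beyond this notational bookkeeping no further analytic input is required, so the only place where one might slip is in this matching step.
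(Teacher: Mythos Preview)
Your proposal is correct and takes essentially the same approach as the paper, which simply states that Proposition~\ref{prop2} is ``a consequence of the previous result'' (Proposition~\ref{prop1}$'$) and gives no further details. Your careful bookkeeping of the quantifiers on $h$ and $r$, and your observation that the harmless factors $\kappa(s_j^{-1})$ in the $r$-exponent do not disturb the existential/universal structure, fill in exactly the routine verification the paper omits.
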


\par




\par

By similar arguments that led to Proposition \ref{prop2}
we also get the following. The details are left for the reader.

\par

\begin{prop}\label{prop2'}
Let $s_j,\sigma _j>0$ be such that $s_j+\sigma _j\ge 1$,
$j=1,2$, $\phi \in \maclS
_{s_1,s_2}^{\sigma _1,\sigma _2}
(\rr {d_1+d_2})\setminus 0$ and let
$f$ be a Gelfand-Shilov distribution on $\rr {d_1+d_2}$. 
Then there exists $h>0$ such that \eqref{GelfRelCond1A}$'$
holds for every $r>0$, if and only if $f\in
\Gamma _{s_1,s_2;0}^{\sigma _1,\sigma _2} (\rr {d_1+d_2})$.
\end{prop}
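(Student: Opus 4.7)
The approach I would take follows the scheme of Proposition \ref{prop1}$'$ and the passage to Proposition \ref{prop2}, but with careful bookkeeping of the constants, exploiting the fact that the \emph{global} threshold for $y$-integrability is determined once and for all by the fixed window $\phi \in \maclS_{s_1,s_2}^{\sigma_1,\sigma_2}$, and not by the growth parameter $r$.

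\textbf{Necessity ($f\in\Gamma_{s_1,s_2;0}^{\sigma_1,\sigma_2}\Longrightarrow$ STFT estimate).} By the defining decay of $\phi\in\maclS_{s_1,s_2}^{\sigma_1,\sigma_2}\setminus 0$, there exist fixed constants $h_1,r_1>0$ with $|\partial^\beta\phi(y)|\lesssim h_1^{|\beta|}(\beta_1!)^{\sigma_1}(\beta_2!)^{\sigma_2} e^{-r_1(|y_1|^{1/s_1}+|y_2|^{1/s_2})}$. Let $h_0$ be the single constant provided by membership of $f$ in $\Gamma_{s_1,s_2;0}^{\sigma_1,\sigma_2}$. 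Following the proof of Proposition \ref{prop1} with $F_x(y)=f(y+x)\overline{\phi(y)}$, I would apply Leibniz and the elementary inequality $|y_j+x_j|^{1/s_j}\le \kappa(s_j^{-1})(|y_j|^{1/s_j}+|x_j|^{1/s_j})$ to obtain, for any $r>0$,
\begin{equation*}
|\partial^\gamma F_x(y)|\lesssim C_r\, H^{|\gamma|}(\gamma_1!)^{\sigma_1}(\gamma_2!)^{\sigma_2}
\exp\!\Bigl(\kappa r\sum_j|x_j|^{1/s_j}\Bigr)\exp\!\Bigl(-(r_1-\kappa r)\sum_j|y_j|^{1/s_j}\Bigr),
\end{equation*}
where $H:=2\max(h_0,h_1)$ is independent of $r$ and $\kappa:=\max_j\kappa(s_j^{-1})$. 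For $r$ with $\kappa r<r_1$ this is integrable, and bounding $|\xi^\beta\widehat{F_x}(\xi)|$ by the $L^1$-norm of $\partial^\beta F_x$ followed by the standard optimisation over $\beta$ yields $|V_\phi f(x,\xi)|=|\widehat{F_x}(\xi)|\lesssim C'_r e^{r\sum_j|x_j|^{1/s_j}-h\sum_j|\xi_j|^{1/\sigma_j}}$ with $h>0$ determined solely by $H$ (hence independent of $r$). This settles small $r$; for $r\ge r_1/(2\kappa)$, the bound just proved at $r=r_1/(2\kappa)$ already dominates the required one, since enlarging $r$ only weakens the right-hand side.

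\textbf{Sufficiency (STFT estimate $\Longrightarrow f\in\Gamma_{s_1,s_2;0}^{\sigma_1,\sigma_2}$).} Let $h>0$ be the fixed constant in the STFT estimate. Using the inversion formula \eqref{Eq:STFTInversionFormula} and differentiating under the integral, I would follow the computation \eqref{Eq:GSTypeEst1} from the proof of Proposition \ref{prop1}, inserting the hypothesis on $|V_\phi f|$ with some parameter $r'>0$ to be chosen and the decay estimate for $\phi$ with the fixed constants $h_1,r_1$. After applying \eqref{Eq:StirlingTypeRel} to the factor $\eta^\beta e^{-h|\eta|^{1/\sigma}}$ (the Stirling-type gain producing a constant $h_2$ that depends only on $h$), and using $|y_j|^{1/s_j}\le \kappa(s_j^{-1})(|y_j-x_j|^{1/s_j}+|x_j|^{1/s_j})$, one obtains
\begin{equation*}
|\partial_{x_1}^{\alpha_1}\partial_{x_2}^{\alpha_2}f(x)|
\lesssim C_{r'}\, H_0^{|\alpha|}(\alpha_1!)^{\sigma_1}(\alpha_2!)^{\sigma_2}\,
e^{\kappa r'\sum_j|x_j|^{1/s_j}}
\end{equation*}
whenever $\kappa r'<r_1$, with $H_0:=2\max(h_1,h_2)$ depending only on the fixed data $\phi$ and $h$. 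Given an arbitrary target $r>0$, choosing $r':=r/\kappa$ (if $r<r_1$) gives the sought bound with the single constant $h_0:=H_0$; for $r\ge r_1$, the bound at $r'=r_1/(2\kappa)$ already implies the claim with the same $h_0$. Since $h_0$ is independent of the chosen $r$, this exhibits $f$ as an element of $\Gamma_{s_1,s_2;0}^{\sigma_1,\sigma_2}(\rr{d_1+d_2})$.

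\textbf{Main obstacle.} The only subtle point is that, in contrast with Proposition \ref{prop2} (which uses a window in $\Sigma$), here $\phi$ only lies in $\maclS$, so the decay rate $r_1$ it provides is \emph{fixed}; consequently the direct Leibniz/inversion arguments give $y$-integrability only for $r<r_1/\kappa$. The trick is simply to notice that the conclusion we want is a family of estimates indexed by $r>0$, and that once the estimate is established for every $r$ below the threshold, the larger-$r$ estimates follow by monotonicity because the right-hand side only grows in $r$. Ensuring that the constants $h$ (in the direct direction) and $h_0$ (in the converse) come out \emph{independent of $r$} — this is the whole point of the proposition — is guaranteed by tracking that $h,h_0$ are built from $\phi$ and the fixed opposite-side constant alone.
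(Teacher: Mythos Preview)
Your proposal is correct and follows essentially the approach the paper indicates (the paper gives no explicit proof, referring to ``similar arguments'' as for Proposition~\ref{prop2}); you have correctly identified and handled the one genuine new feature, namely that $\phi\in\maclS_{s_1,s_2}^{\sigma_1,\sigma_2}$ supplies only a \emph{fixed} decay rate $r_1$, and resolved it by the monotonicity-in-$r$ observation together with the bookkeeping that shows the regularity constant $h$ (resp.\ $h_0$) depends only on $h_0,h_1$ (resp.\ $h,h_1$) and not on $r$. One minor slip: in the necessity direction your displayed STFT bound should carry $e^{\kappa r\sum_j|x_j|^{1/s_j}}$ rather than $e^{r\sum_j|x_j|^{1/s_j}}$, but this is harmless since you may reparametrise (choose $r'=r/\kappa$ as you do in the converse direction) before invoking monotonicity.
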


\par

We also have the following version of Proposition \ref{prop1}$'$,
involving certain types of moderate weights.

\par

\begin{prop}\label{Prop:WeightedGSCharSTFT}
Let $s,\sigma >0$ be such that $s+\sigma \ge 1$,
$\phi \in \Sigma _{s,\sigma}^{\sigma ,s}
(\rr {2d})\setminus 0$  ($\phi \in \maclS
_{s,\sigma}^{\sigma ,s}
(\rr {2d})\setminus 0$), $r>0$, $\omega \in
\mascP _{s,\sigma}(\rr {2d})$ ($\omega \in
\mascP _{s,\sigma}^0(\rr {2d})$) and let
$a$ be a Gelfand-Shilov distribution on $\rr {2d}$.
Then the following is true:
\begin{enumerate}
\item If $a\in C^\infty (\rr {2d})$ and satisfies
\begin{equation}\label{GelfRelCond3}
|\partial _{x}^{\alpha }\partial _{\xi }^{\beta}
a(x,\xi )|\lesssim h ^{|\alpha +\beta|}
\alpha !^{\sigma }\beta !^{s}
\omega (x,\xi ),
\end{equation}
for every $h>0$ (for some $h>0$), then
\begin{equation}\label{GelfRelCond3A}
|V_\phi a(x,\xi ,\eta ,y)|
\lesssim \omega (x,\xi )e^{-r (|\eta |^{\frac 1{\sigma}} +
|y|^{\frac 1s}) },
\end{equation}
for every $r >0$ (for some $r>0$);

\vrum

\item If \eqref{GelfRelCond3A} holds for every $r >0$
(for some $r>0$), then $a\in C^\infty (\rr {2d})$ and
\eqref{GelfRelCond3} holds for every $h>0$ (for some $h>0$).
\end{enumerate}
\end{prop}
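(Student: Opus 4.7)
My plan is to parallel the argument of Proposition \ref{prop1}$'$, using the moderateness estimate \eqref{estomega} to commute the weight $\omega$ between the ``base point'' and the integration variable. The two subcases pair up naturally: when $\omega \in \mascP_{s,\sigma}(\rr{2d})$ and $\phi \in \Sigma_{s,\sigma}^{\sigma,s}(\rr{2d})\setminus 0$, the quantifiers in both \eqref{GelfRelCond3} and \eqref{GelfRelCond3A} read ``for every'', while for $\omega \in \mascP_{s,\sigma}^0(\rr{2d})$ and $\phi \in \maclS_{s,\sigma}^{\sigma,s}(\rr{2d})\setminus 0$ they read ``for some''.

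For part (1), I set $X=(x,\xi)$, $Y=(\eta,y)$, $Z=(z_1,z_2)\in \rr{2d}$, $\gamma=(\gamma_1,\gamma_2)\in \nn{d}\times\nn{d}$, and $G_X(Z)\equiv a(Z)\,\overline{\phi(Z-X)}$, so that $V_\phi a(X,Y)=(2\pi)^{-d}\widehat{G_X}(Y)$. Expanding $\partial^\gamma G_X$ via Leibniz, inserting \eqref{GelfRelCond3} on $a$ together with the Gelfand-Shilov bounds on $\phi$, and absorbing $\omega(Z)$ through $\omega(Z)\lesssim \omega(X)\,e^{r_1(|z_1-x|^{\frac 1s}+|z_2-\xi|^{\frac 1\sigma})}$ from \eqref{estomega}, I obtain
\[
|\partial_Z^\gamma G_X(Z)|\lesssim h^{|\gamma|}\,\gamma_1!^{\sigma}\gamma_2!^{s}\,\omega(X)\,e^{-(r_0-r_1)(|z_1-x|^{\frac 1s}+|z_2-\xi|^{\frac 1\sigma})}
\]
provided $r_0>r_1$. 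Thus $G_X(\,\cdot\, +X)/\omega(X)$ is uniformly controlled in a Gelfand-Shilov class of type $\maclS_{s,\sigma}^{\sigma,s}$ or $\Sigma_{s,\sigma}^{\sigma,s}$, and the Fourier decay step from the proof of Proposition \ref{prop1}$'$ gives \eqref{GelfRelCond3A} with the correct quantifier on $r$.

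For part (2), I start from the inversion formula for $V_\phi$ (the phase-space analogue of \eqref{Eq:STFTInversionFormula}), differentiate in $Z$, and transfer the resulting polynomial factors $\eta^\alpha y^\beta$ into the decay of $V_\phi a$ via the Stirling-type estimate \eqref{Eq:StirlingTypeRel}. Bounding $\omega(X)\lesssim \omega(Z)\,e^{r_1(|z_1-x|^{\frac 1s}+|z_2-\xi|^{\frac 1\sigma})}$ and once more choosing $r_1<r_0$, the $(X,Y)$-integral converges and leaves $h^{|\alpha+\beta|}\alpha!^\sigma\beta!^s\,\omega(Z)$, which is \eqref{GelfRelCond3}.

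The one delicate point is matching the quantifiers: in the projective subcase the moderateness rate $r_1$ coming from $\omega\in\mascP_{s,\sigma}$ is a fixed constant, so I rely on $\phi\in\Sigma$ to take $r_0>r_1$ freely; in the inductive subcase the decay rate $r_0$ of $\phi\in\maclS$ is fixed, so I rely on $\omega\in\mascP_{s,\sigma}^0$ to pick $r_1<r_0$. Once this inequality is arranged, the remaining estimates are the routine Leibniz/Gevrey manipulations already used in Proposition \ref{prop1}$'$.
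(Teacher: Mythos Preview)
Your proposal is correct and follows essentially the same route as the paper: for (1) you normalize $a(\,\cdot\,+X)\overline{\phi(\,\cdot\,)}/\omega(X)$ into a bounded family in the appropriate Gelfand-Shilov class via Leibniz and the moderateness estimate \eqref{estomega}, then invoke Fourier decay, and for (2) you differentiate under the inversion formula and absorb the polynomial factors via \eqref{Eq:StirlingTypeRel} exactly as the paper does. Your explicit discussion of how the quantifier pairing ($r_1<r_0$ via either $\phi\in\Sigma$ or $\omega\in\mascP_{s,\sigma}^0$) works is in fact slightly more detailed than the paper's, which simply treats one case and leaves the other to the reader.
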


\par



\par

\begin{proof}
We shall use similar arguments as in the proof of Proposition
\ref{prop1}.
Let $X=(x,\xi )\in \rr {2d}$, $Y=(y,\eta )\in \rr {2d}$,
$Z=(z,\zeta )\in \rr {2d}$ and let
$\phi  \in \Sigma _{s,\sigma}^{\sigma ,s} (\rr {2d})
\setminus 0$.
Suppose that $\omega \in \mascP _{s,\sigma}(\rr {2d})$
and that \eqref{GelfRelCond3} holds for all $h>0$.
If
$$
F_{X}(Y)\equiv \frac{a_k(Y+X)\overline{\phi (Y)}}
{\omega (X)}
$$
then the fact that $\omega (X)
\lesssim
\omega (Y+X)e^{r_0(|y|^{\frac 1s} +|\eta |^{\frac 1\sigma})}$
gives that
$$
\sets {Y\mapsto F_{X}(Y)}{X\in \rr {2d}}
$$
is a bounded set of $\Sigma _{s,\sigma}^{\sigma,s}$. Hence
$$
|\partial _y^\alpha \partial _\eta^\beta F_{X}(y,\eta)|
\lesssim
h^{|\alpha+\beta |}\alpha !^\sigma \beta!^s
e^{-r(|y|^{\frac 1s}+|\eta|^{\frac 1\sigma})},
$$
for every $h,r >0$. In particular,
\begin{equation}\label{F_kXEsts}
\begin{aligned}
|F_{X}(y,\eta)
&\lesssim
e^{-r(|y|^{\frac 1s}+|\eta|^{\frac 1\sigma})}
\\[1ex]
\text{and}\qquad
|(\mascF  F_{X})(\zeta ,z)|
&\lesssim
e^{-r(|z|^{\frac 1s}+|\zeta |^{\frac 1\sigma})},
\end{aligned}
\end{equation}
for every $r>0$. Since
$$
|V _{\phi}a(x,\xi ,\eta ,y)|
=
|(\mascF  F_{X})(\eta ,y)\omega (X)|,
$$
it follows that \eqref{GelfRelCond3A} holds for
all $r>0$. This gives (1) in the case when
$\omega \in \mascP _{s,\sigma}(\rr {2d})$ and
$\phi \in \Sigma _{s,\sigma}^{\sigma ,s}(\rr {2d})
\setminus 0$. In the same way, (1) follows in the case
when
$\omega \in \mascP _{s,\sigma}^0(\rr {2d})$ and
$\phi \in \maclS _{s,\sigma}^{\sigma ,s}(\rr {2d})
\setminus 0$. The details are left for the reader.

\par

Next we prove (2) in the case $\cdots$. Therefore,
suppose \eqref{GelfRelCond3A} holds for
all $r>0$. Then $a$ is smooth in view of
Proposition \ref{prop1}$'$.

\par

By differentiation, \eqref{Eq:STFTInversionFormula},
the fact that
$$
\omega (Z)\lesssim \omega (X)
e^{r_0(|x-z|^{\frac 1s}+|\xi -\zeta |^{\frac 1\sigma})},
$$
and the fact that $\phi \in
\Sigma _{s,\sigma}^{\sigma ,s}$ we get
\begin{multline*}
|\partial _x^\alpha \partial _\xi ^\beta a(x,\xi )|
\\[1ex]
\lesssim
\underset{\delta \le \beta} {\sum _{\gamma \le \alpha}}
{\alpha \choose \gamma}{\beta \choose \delta}
\iint _{\rr {4d}} |\eta ^\gamma y^\delta
V_\phi a(z,\zeta ,\eta ,y )
(\partial _x^{\alpha -\gamma}\partial _\xi ^{\beta -\delta}
\phi )(X-Z)|
\, dYdZ
\\[1ex]
\lesssim
\underset{\delta \le \beta} {\sum _{\gamma \le \alpha}}
{\alpha \choose \gamma}{\beta \choose \delta}
h^{|\alpha +\beta -\gamma -\delta |}
(\alpha -\gamma )!^\sigma
(\beta -\delta )!^s
I_{\gamma ,\delta}(X),
\end{multline*}
where
\begin{multline*}
I_{\gamma ,\delta}(X)
=
\iint _{\rr {4d}}
\omega (Z)
|\eta ^\gamma y^\delta
e^{-(r+r_0)(|x-z|^{\frac 1s}+|y|^{\frac 1s}
+|\xi -\zeta |^{\frac 1\sigma}+|\eta |^{\frac 1\sigma})}
\, dYdZ
\\[1ex]
\lesssim
\omega (X)
\iint _{\rr {4d}}
|\eta ^\gamma y^\delta
e^{-r(|z|^{\frac 1s}+|y|^{\frac 1s}
+|\zeta |^{\frac 1\sigma}+|\eta |^{\frac 1\sigma})}
\, dYdZ
\\[1ex]
\lesssim
h^{|\gamma +\delta |}\gamma !^\sigma \delta !^s
\omega (X)
\iint _{\rr {4d}}
e^{-\frac r2(|z|^{\frac 1s}+|y|^{\frac 1s}
+|\zeta |^{\frac 1\sigma}+|\eta |^{\frac 1\sigma})}
\, dYdZ
\\[1ex]
\asymp
h^{|\gamma +\delta |}\gamma !^\sigma \delta !^s
\omega (X)
\end{multline*}
for every $h,r>0$.
Here the last inequality follows from
\eqref{Eq:StirlingTypeRel}. It follows that
\eqref{GelfRelCond3} holds for every $h>0$
by using the estimates above and similar computations
as in \eqref{Eq:GSTypeEst1}.

\par

The remaining case follows by similar arguments
and is left for the reader.
\end{proof}




\par

\section{Invariance, continuity and composition properties for
pseudo-differential operators}\label{sec3}

\par

In this section we deduce invariance, continuity and composition properties for
pseudo-differential operators with symbols in the classes considered in the
previous sections. In the first part we show that for any such class $S$, the
set $\op _A (S)$ of pseudo-differential operators is independent of the 
matrix $A$. Thereafter we deduce that such operators are continuous
on Gelfand-Shilov spaces and their duals. In the last part we deduce that these
operator classes are closed under compositions.

\par

\subsection{Invariance properties}

\par

An important ingredient in these considerations concerns mapping properties
for the operator $e^{i\scal {AD_\xi}{D_x}}$. In fact we have the following.

\par

\begin{thm}\label{Thm:CalculiTransf}
Let  $s,s_1,s_2,\sigma ,
\sigma _1,\sigma
_2>0$ be such that
$$
s+\sigma \ge 1,\quad
s_1+\sigma _1\ge 1,\quad
s_2+\sigma _2\ge 1,\quad s_2\le s_1
\quad \text{and}\quad
\sigma _1\le \sigma _2,
$$
and let $A \in \MdR$. Then the following is true:
\begin{enumerate}
\item $e^{i\scal {AD_\xi}{D_x}}$ on $\mascS (\rr {2d})$ restricts to
a homeomorphism on $\maclS _{s_1,\sigma _2}^{\sigma _1,s_2}(\rr {2d})$,
and extends uniquely to a homeomorphism on
$(\maclS _{s_1,\sigma _2}^{\sigma _1,s_2})'(\rr {2d})$;

\vrum

\item if in addition $(s_1,\sigma _1)\neq (\frac 12 ,\frac 12)$ and
$(s_2,\sigma _2)\neq (\frac 12 ,\frac 12)$, then $e^{i\scal {AD_\xi}{D_x}}$
on $\mascS (\rr {2d})$ restricts to a homeomorphism on
$\Sigma _{s_1,\sigma _2}^{\sigma _1,s_2}(\rr {2d})$,
and extends uniquely to a homeomorphism on
$(\Sigma _{s_1,\sigma _2}^{\sigma _1,s_2})'(\rr {2d})$;

\vrum

\item $e^{i\scal {AD_\xi}{D_x}}$ is a homeomorphism
on $\Gamma _{s,\sigma ;0}^{\sigma ,s}(\rr {2d})$;

\vrum

\item if in addition $(s,\sigma )\neq (\frac 12,\frac 12)$,
then $e^{i\scal {AD_\xi}{D_x}}$ is a
homeomorphism on $\Gamma _{s,\sigma}^{\sigma ,s;0}(\rr {2d})$
and on $\Gamma _{s,\sigma ;0}^{\sigma ,s;0}(\rr {2d})$.
%
%
\end{enumerate}
\end{thm}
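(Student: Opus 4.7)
The plan rests on the identity
\begin{equation*}
e^{i\scal {AD_\xi}{D_x}}a
=
\mascF _2^{-1}\bigl[ \tau _A (\mascF _2 a)\bigr],
\qquad
(\tau _A g)(x,y):=g(x+Ay,y),
\end{equation*}
which follows because $\mascF _2$ converts $D_\xi$ into multiplication by the dual variable $y$, so that the Fourier multiplier $e^{i\scal {AD_\xi}{D_x}}$ is intertwined with the translation operator $e^{i\scal {Ay}{D_x}}$ acting in the $x$-variable, which is precisely the shearing $\tau _A$. I would first verify this identity on $\mascS (\rr {2d})$ and then propagate it by density and duality to the Gelfand--Shilov spaces and their duals. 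In this way the whole theorem reduces to the mapping properties of $\mascF _2$, already supplied by Proposition \ref{propBroadGSSpaceChar}, together with the continuity of $\tau _A$ on the appropriate spaces.

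For (1) and (2), Proposition \ref{propBroadGSSpaceChar} gives that $\mascF _2$ is a homeomorphism from $\maclS _{s_1,\sigma _2}^{\sigma _1,s_2}(\rr {2d})$ onto $\maclS _{s_1,s_2}^{\sigma _1,\sigma _2}(\rr {2d})$, and similarly for the $\Sigma$-variants and for the duals. It then suffices to show that $\tau _A$ is continuous on $\maclS _{s_1,s_2}^{\sigma _1,\sigma _2}(\rr {2d})$ and on $\Sigma _{s_1,s_2}^{\sigma _1,\sigma _2}(\rr {2d})$. The decay control rests on
$|x|^{1/s_1}\le \kappa (s_1^{-1})(|x+Ay|^{1/s_1}+|Ay|^{1/s_1})$ together with the bound $|Ay|^{1/s_1}\lesssim 1+|y|^{1/s_2}$, where the last inequality uses precisely $s_2\le s_1$. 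The derivative control follows from the chain rule
\begin{equation*}
\partial _x^\alpha \partial _y^\beta (\tau _A g)(x,y)
=
\sum _{\gamma \le \beta }\binom{\beta }{\gamma}(A^T)^\gamma
(\partial _x^{\alpha +\gamma }\partial _y^{\beta -\gamma} g)(x+Ay,y),
\end{equation*}
combined with the factorial estimate $(\alpha +\gamma )!^{\sigma _1}(\beta -\gamma )!^{\sigma _2}\lesssim C^{|\alpha +\beta |}\alpha !^{\sigma _1}\beta !^{\sigma _2}$, which relies on the upgrade $\gamma !^{\sigma _1}\le \gamma !^{\sigma _2}$, i.e. on $\sigma _1\le \sigma _2$. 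These estimates are stable under both quantifiers defining $\maclS _{s_1,s_2}^{\sigma _1,\sigma _2}$ (some $h$) and $\Sigma _{s_1,s_2}^{\sigma _1,\sigma _2}$ (every $h$), yielding (1) and (2), and the dual statements follow by transposition since $(e^{i\scal {AD_\xi}{D_x}})^{*}=e^{-i\scal {AD_\xi}{D_x}}$ is of the same form.

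For (3) and (4), I would use the STFT characterizations of the Gamma classes provided by Propositions \ref{prop2}, \ref{prop2'} and \ref{Prop:WeightedGSCharSTFT}. Pick a window $\phi$ in $\Sigma _{s,\sigma}^{\sigma ,s}(\rr {2d})$ or in $\maclS _{s,\sigma}^{\sigma ,s}(\rr {2d})$, depending on which Gamma class is under consideration; by (1) and (2) applied with $s_1=s_2=s$ and $\sigma _1=\sigma _2=\sigma$, the function $\psi :=e^{-i\scal {AD_\xi}{D_x}}\phi$ lies in the same Gelfand--Shilov space and is thus itself an admissible window. Since $e^{i\scal {AD_\xi}{D_x}}$ conjugates time-frequency shifts on $\rr {2d}$ into time-frequency shifts along a linear symplectic change of variables $\Phi _A$ on $\rr {4d}$ (modulo a quadratic phase), one obtains an identity of the form
\begin{equation*}
\bigl| V_\phi \bigl( e^{i\scal {AD_\xi}{D_x}}a\bigr) (X,Y)\bigr|
=
\bigl| V_\psi a(\Phi _A(X,Y))\bigr|.
\end{equation*}
The anisotropic weights appearing in Proposition \ref{Prop:WeightedGSCharSTFT} lie in $\mascP _{s,\sigma}$ or $\mascP _{s,\sigma}^0$ and are therefore invariant up to constants under the linear map $\Phi _A$, by \eqref{estomega}. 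Consequently, the STFT characterizations of the Gamma classes are preserved by $e^{i\scal {AD_\xi}{D_x}}$, which gives (3) and (4). The restriction $(s,\sigma )\neq (\tfrac 12,\tfrac 12)$ in (4) is needed precisely to ensure that $\Sigma _{s,\sigma}^{\sigma ,s}(\rr {2d})\neq \{0\}$, so that such a window $\phi$ actually exists.

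I expect the main obstacle to be the continuity of the shearing $\tau _A$ on the anisotropic Gelfand--Shilov spaces: the conditions $s_2\le s_1$ and $\sigma _1\le \sigma _2$ are not ornamental, they arise exactly as the thresholds at which the spreading in $y$ produced by $x\mapsto x+Ay$ can be reabsorbed into the decay in $y$, and at which the extra $x$-derivatives of $g$ generated by the chain rule can be reabsorbed into the available $y$-factorials. A secondary delicate point in (3) and (4) is to check that the induced change of coordinates $\Phi _A$ on the STFT side is compatible with the anisotropic submultiplicative weights, which is where the symmetric structure of the indices in the Gamma classes $\Gamma _{s,\sigma}^{\sigma ,s}$ becomes essential.
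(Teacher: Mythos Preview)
Your proposal is correct and, for parts (3) and (4), essentially coincides with the paper's argument: both use the STFT characterizations of the $\Gamma$-classes (Propositions \ref{prop2} and \ref{prop2'}) together with the identity
\[
\bigl|V_{\phi_A}\bigl(e^{i\scal{AD_\xi}{D_x}}a\bigr)(x,\xi,\eta,y)\bigr|
=
\bigl|V_\phi a(x-Ay,\xi-A^*\eta,\eta,y)\bigr|,
\qquad \phi_A=e^{i\scal{AD_\xi}{D_x}}\phi,
\]
which is exactly your $|V_\phi(\cdot)|=|V_\psi a\circ\Phi_A|$ formula after relabelling the window. The key observation, which you identify, is that the shear $(x,\xi)\mapsto(x-Ay,\xi-A^*\eta)$ mixes variables carrying the \emph{same} exponents $1/s$ and $1/\sigma$, so the exponential weights are preserved; this is precisely the ``symmetric structure of the indices'' you mention.

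For parts (1) and (2) there is a genuine difference in presentation: the paper does not prove them at all, but cites \cite{CaTo,Tr,CaWa}. Your direct route via the factorization $e^{i\scal{AD_\xi}{D_x}}=\mascF_2^{-1}\circ\tau_A\circ\mascF_2$ and explicit estimates for the shearing $\tau_A$ on $\maclS_{s_1,s_2}^{\sigma_1,\sigma_2}$ is self-contained and makes transparent why the hypotheses $s_2\le s_1$ and $\sigma_1\le\sigma_2$ are needed --- the first to absorb the $|Ay|^{1/s_1}$ term into the $|y|^{1/s_2}$ decay, the second to control the extra $x$-derivatives generated by the chain rule. This is in fact the underlying mechanism in the cited references. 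One minor point: the reference to Proposition \ref{Prop:WeightedGSCharSTFT} is not needed here (it concerns the weighted classes $\Gamma_{(\omega)}^{\sigma,s;0}$ treated in Theorem \ref{Thm:CalculiTransfbis}); Propositions \ref{prop2} and \ref{prop2'} suffice for (3) and (4).
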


\par

The assertion (1) in the previous theorem is proved in \cite{CaTo} and is essentially
a special case of Theorem 32 in \cite{Tr}, whereas (2) can be found in
\cite{CaTo,CaWa}. Thus we need to prove Theorem \ref{Thm:CalculiTransf} (3) and (4),
which are extensions of \cite[Theorem 4.6 (3)]{CaTo}.

\par

\begin{proof}
We need to prove (3) and (4) and begin with (3).
Let $\phi \in \maclS _{s,\sigma}^{\sigma, s}(\rr {2d})$ and
$\phi _A = e^{i\scal {AD_\xi }{D_x}}\phi$. Then
$\phi _A\in \maclS _{s,\sigma}^{\sigma, s}(\rr {2d})$, in
view of (1), and
\begin{equation}\label{Eq:stftinv}
    |(V_{\phi _A} (e^{i\scal {AD_\xi}{D_x}}a))(x,\xi ,\eta ,y)|
=
|(V_\phi a)(x-Ay,\xi -A^*\eta ,\eta ,y)|
\end{equation}
by straightforward computations.
Then $a\in\Gamma _{s,\sigma ;0}^{\sigma ,s}(\rr {2d})$ is equivalent
to that for some $h>0$,
$$
|V_\phi a(x,\xi,\eta,y)|
\lesssim
e^{r (|x|^{\frac 1{s}} +|\xi|^{\frac 1{\sigma}})
-h (|\eta|^{\frac 1{\sigma}}
+
|y|^{\frac 1{s}})},
$$
holds for every $r >0$, in view of Proposition \ref{prop2'}.
By \eqref{Eq:stftinv} and (1) it follows by straightforward
computation, that the latter condition is invariant under
the mapping $e^{i\scal {AD_\xi }{D_x}}$, and (3) follows from
these invariance properties.
By similar arguments, taking $\phi \in \Sigma _{s,\sigma}^{\sigma, s}(\rr {2d})$
and using (2) instead of (1), we deduce (4).
\end{proof}

\par

\begin{cor}
Let $s,\sigma >0$ be such that $s+\sigma \ge 1$ and
$\sigma \le s$. Then $e^{i\scal {AD_\xi}{D_x}}$ is
a homeomorphism on $\maclS _{s}^{\sigma}(\rr {2d})$,
$\Sigma _{s}^{\sigma}(\rr {2d})$,
$(\maclS _{s}^{\sigma})'(\rr {2d})$ and on
$(\Sigma _{s}^{\sigma})'(\rr {2d})$.
\end{cor}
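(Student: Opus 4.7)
The plan is to deduce the corollary directly as a specialization of Theorem \ref{Thm:CalculiTransf} parts (1) and (2). I would apply the theorem with the symmetric choice $s_1=s$, $s_2=\sigma$, $\sigma_1=\sigma$, $\sigma_2=s$. Under this choice, the admissibility conditions $s_j+\sigma_j\ge 1$ for $j=1,2$ both collapse to the assumption $s+\sigma\ge 1$, and the two ordering inequalities $s_2\le s_1$ and $\sigma_1\le\sigma_2$ both collapse to the second assumption $\sigma\le s$. Moreover, the space appearing in the conclusion of the theorem, $\maclS_{s_1,\sigma_2}^{\sigma_1,s_2}(\rr{2d})$, becomes $\maclS_{s,s}^{\sigma,\sigma}(\rr{2d})$.

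The next step is the observation that under the natural splitting $\rr{2d}=\rr d\oplus \rr d$, the symmetric split space coincides topologically with the isotropic one, $\maclS_{s,s}^{\sigma,\sigma}(\rr{2d})=\maclS_s^\sigma(\rr{2d})$, and similarly for the $\Sigma$-variants and for the corresponding duals. This identification is immediate from comparing \eqref{GSExtCond} with $s_1=s_2=s$, $\sigma_1=\sigma_2=\sigma$ to \eqref{gfseminorm}, using $\alpha!=\alpha_1!\alpha_2!$ and $y^\alpha=x_1^{\alpha_1}x_2^{\alpha_2}$ for $y=(x_1,x_2)$. Combining this identification with Theorem \ref{Thm:CalculiTransf} (1) yields the desired homeomorphism on $\maclS_s^\sigma(\rr{2d})$, together with its unique continuous extension to a homeomorphism on $(\maclS_s^\sigma)'(\rr{2d})$.

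The $\Sigma$-case is handled in exactly the same way using Theorem \ref{Thm:CalculiTransf} (2). The only technicality is the auxiliary restriction $(s_j,\sigma_j)\ne(\tfrac 12,\tfrac 12)$ there; with my parameter choice this reduces to the single condition $(s,\sigma)\ne(\tfrac 12,\tfrac 12)$, and in the excluded case $s=\sigma=\tfrac 12$ the spaces $\Sigma_s^\sigma(\rr{2d})$ and $(\Sigma_s^\sigma)'(\rr{2d})$ are $\{0\}$, so the statement holds trivially. No genuine obstacle arises: the entire content of the corollary is the recognition that the isotropic setting $\maclS_s^\sigma(\rr{2d})$ fits into the parameter regime of Theorem \ref{Thm:CalculiTransf} precisely when $\sigma\le s$, after which everything follows formally.
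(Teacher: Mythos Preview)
Your proposal is correct and is precisely the intended derivation: the paper states the corollary immediately after Theorem \ref{Thm:CalculiTransf} without proof, and the only natural route is to specialize parts (1) and (2) with $s_1=s$, $s_2=\sigma$, $\sigma_1=\sigma$, $\sigma_2=s$, together with the identification $\maclS_{s,s}^{\sigma,\sigma}(\rr{2d})=\maclS_s^\sigma(\rr{2d})$. Your handling of the excluded case $(s,\sigma)=(\tfrac12,\tfrac12)$ for the $\Sigma$-spaces is also correct.
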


\par

We also have the following extension of (3) and (4) in
\cite[Theorem 4.1]{CaTo}.


\par

\begin{thm}\label{Thm:CalculiTransfbis}
Let $\omega \in \mascP _{s,\sigma}(\rr {2d})$,
$s,\sigma>0$ be such that $s+\sigma\ge 1$.
Then $a\in \Gamma _{(\omega)} ^{\sigma,s;0}(\rr {2d})$
if and only if $e^{i\scal 
{AD_\xi}{D_x}}a\in \Gamma _{(\omega)} ^{\sigma,s;0}(\rr {2d})$.
\end{thm}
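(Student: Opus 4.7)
The plan is to mirror the proof of Theorem \ref{Thm:CalculiTransf}(3)--(4), but to replace the use of the unweighted characterizations of Propositions \ref{prop2}--\ref{prop2'} by the weighted short-time Fourier transform characterization supplied by Proposition \ref{Prop:WeightedGSCharSTFT}. By symmetry (replacing $A$ by $-A$ and using that $e^{-i\scal{AD_\xi}{D_x}}$ is the inverse of $e^{i\scal{AD_\xi}{D_x}}$, e.g. via Theorem \ref{Thm:CalculiTransf}(2)), it suffices to show the forward implication.

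First I would fix a window $\phi \in \Sigma_{s,\sigma}^{\sigma,s}(\rr{2d})\setminus 0$ and set $\phi_A = e^{i\scal{AD_\xi}{D_x}}\phi$. Theorem \ref{Thm:CalculiTransf}(2), applied with $s_1=s,\ s_2=\sigma,\ \sigma_1=\sigma,\ \sigma_2=s$, ensures that $\phi_A$ is again in $\Sigma_{s,\sigma}^{\sigma,s}(\rr{2d})$, so it is an admissible window. The intertwining identity
\begin{equation*}
|V_{\phi_A}(e^{i\scal{AD_\xi}{D_x}}a)(x,\xi,\eta,y)| = |V_\phi a(x-Ay,\,\xi-A^*\eta,\,\eta,\,y)|,
\end{equation*}
already used in the proof of Theorem \ref{Thm:CalculiTransf} (cf.\ \eqref{Eq:stftinv}), rewrites the STFT of the transformed symbol in terms of the STFT of $a$ with shifted phase-space variables.

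Next I would translate membership in $\Gamma_{(\omega)}^{\sigma,s;0}(\rr{2d})$ into an STFT condition via Proposition \ref{Prop:WeightedGSCharSTFT} (with $\omega \in \mascP_{s,\sigma}$ and $\phi \in \Sigma_{s,\sigma}^{\sigma,s}$): namely, $a \in \Gamma_{(\omega)}^{\sigma,s;0}$ if and only if
\begin{equation*}
|V_\phi a(x,\xi,\eta,y)| \lesssim \omega(x,\xi)\, e^{-r(|\eta|^{1/\sigma}+|y|^{1/s})}
\end{equation*}
for every $r>0$. Combined with the intertwining identity this yields, for every $r>0$,
\begin{equation*}
|V_{\phi_A}(e^{i\scal{AD_\xi}{D_x}}a)(x,\xi,\eta,y)| \lesssim \omega(x-Ay,\xi-A^*\eta)\, e^{-r(|\eta|^{1/\sigma}+|y|^{1/s})}.
\end{equation*}
The $v$-moderateness of $\omega$ in the form \eqref{estomega} gives a fixed $r_0>0$, depending only on $\omega$, such that $\omega(x-Ay,\xi-A^*\eta) \lesssim \omega(x,\xi)\, e^{r_0(|Ay|^{1/s}+|A^*\eta|^{1/\sigma})}$; combined with the elementary bound $|Ay|^{1/s} \le \|A\|^{1/s}|y|^{1/s}$ (and likewise for $A^*\eta$), there is a fixed $r_1>0$ with $\omega(x-Ay,\xi-A^*\eta) \lesssim \omega(x,\xi)\, e^{r_1(|y|^{1/s}+|\eta|^{1/\sigma})}$. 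Absorbing this into the exponential (legitimate because $r$ may be chosen arbitrarily large) gives
\begin{equation*}
|V_{\phi_A}(e^{i\scal{AD_\xi}{D_x}}a)(x,\xi,\eta,y)| \lesssim \omega(x,\xi)\, e^{-r'(|\eta|^{1/\sigma}+|y|^{1/s})}
\end{equation*}
for every $r'>0$. A final application of Proposition \ref{Prop:WeightedGSCharSTFT} in the reverse direction then places $e^{i\scal{AD_\xi}{D_x}}a$ in $\Gamma_{(\omega)}^{\sigma,s;0}(\rr{2d})$, as required.

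The main obstacle I anticipate is the edge case $(s,\sigma)=(\tfrac12,\tfrac12)$, in which $\Sigma_{s,\sigma}^{\sigma,s}(\rr{2d})$ is trivial and no $\Sigma$-window is available. This matters because the "every $h$" characterization needed for $\Gamma_{(\omega)}^{\sigma,s;0}$ is the case of Proposition \ref{Prop:WeightedGSCharSTFT} that pairs a $\Sigma$-window with $\omega\in\mascP_{s,\sigma}$; the alternative $\maclS$-window statement only yields the "some $h$" characterization and moreover is formulated for $\omega\in\mascP_{s,\sigma}^0$. One would therefore either restrict the statement to $(s,\sigma)\neq(\tfrac12,\tfrac12)$ or argue this borderline case through a limiting/approximation device on $\omega$ or on the quantization parameter. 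The remaining technical bookkeeping -- tracking the fixed constant $r_0=r_0(\omega,A,s,\sigma)$ against the free parameter $r$ -- is routine.
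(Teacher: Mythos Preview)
Your proposal is correct and follows essentially the same approach as the paper: pick a window $\phi\in\Sigma_{s,\sigma}^{\sigma,s}(\rr{2d})$, use the intertwining identity \eqref{Eq:stftinv}, characterize $\Gamma_{(\omega)}^{\sigma,s;0}$ through an $L^\infty$ bound on $V_\phi a$, and absorb the phase-space shift using the $v$-moderateness \eqref{estomega} of $\omega$. The only technical difference is that you invoke Proposition~\ref{Prop:WeightedGSCharSTFT} directly, whereas the paper routes through the slightly more general modulation-space formulation of Proposition~\ref{SymbClassModSpace} (together with Proposition~\ref{prepalg}); since only the $L^\infty$ case is used in the end, your route is in fact a little more economical.

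The edge case you flagged is exactly the one obstacle: when $(s,\sigma)=(\tfrac12,\tfrac12)$, there is no nonzero $\Sigma_{s,\sigma}^{\sigma,s}$-window, so neither your argument nor the paper's argument via Proposition~\ref{SymbClassModSpace} applies. The paper does not resolve this by approximation but simply cites the earlier result \cite[Theorem~4.1]{CaTo}, which covers the isotropic case $s=\sigma$ and in particular $s=\sigma=\tfrac12$. With that citation supplied, your argument is complete.
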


\par

We need some preparation for the proof and start with the following
proposition.

\par

\begin{prop} \label{prepalg}
Let $s,\sigma >0$ be such that $s+\sigma \ge 1$ and
$(s,\sigma )\neq (\frac 12,\frac 12)$,
$\phi \in \Sigma _{s,\sigma}^{\sigma ,s}(\rr {2d})\setminus 0$,
$\omega \in \mascP _{s,\sigma }(\rr {2d})$ and let $a$ be a
Gelfand-Shilov distribution on $\rr {2d}$.
Then the following conditions are equivalent:
\begin{enumerate}
\item $ a \in \Gamma ^{\sigma ,s;0}_{(\omega)}(\rr {2d})$;

\vrum

\item for every $\alpha ,\beta \in \nn d$, $h>0$, $R>0$ and $x,y,\xi ,\eta$
in $\rr {d}$ it holds
\begin{equation}\label{STFTEst1}
\left |
\partial _x^\alpha \partial _\xi ^\beta
\left ( e^{i(\scal x\eta +\scal y\xi}
V_{\phi}a(x,\xi ,\eta ,y )
\right )
\right |
\lesssim h ^{|\alpha +\beta |}\alpha!^\sigma \beta !^s
\omega (x,\xi )
e^{-R(|y|^{\frac 1s}+|\eta |^{\frac 1\sigma})}\text ;
\end{equation}

\vrum

\item for $\alpha =\beta =0$, \eqref{STFTEst1} holds for every
$h>0$, $R>0$ and $x,y,\xi ,\eta \in \rr {d}$.
\end{enumerate}
\end{prop}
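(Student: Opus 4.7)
My plan is to establish the three equivalences as the cycle $(2)\Rightarrow(3)\Rightarrow(1)\Rightarrow(2)$, in which the first two implications are bookkeeping and the substance lies entirely in $(1)\Rightarrow(2)$. The implication $(2)\Rightarrow(3)$ is the specialization $\alpha=\beta=0$. For $(3)\Rightarrow(1)$, the phase factor $e^{i(\scal x\eta+\scal y\xi)}$ has modulus one, so (3) reduces to the estimate $|V_\phi a(x,\xi,\eta,y)|\lesssim \omega(x,\xi)e^{-R(|y|^{\frac 1s}+|\eta|^{\frac 1\sigma})}$ valid for every $R>0$, which is precisely the STFT characterization of $\Gamma^{\sigma,s;0}_{(\omega)}(\rr{2d})$ supplied by Proposition \ref{Prop:WeightedGSCharSTFT}(2).

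For $(1)\Rightarrow(2)$, the strategy is to reduce the derivatives of the phase-modulated STFT of $a$ to an ordinary (phase-modulated) STFT of a derivative of $a$, and then to invoke Proposition \ref{Prop:WeightedGSCharSTFT} once more. To that end I would first establish the commutation identity
\[
\partial_x^\alpha\partial_\xi^\beta\bigl(e^{i(\scal x\eta+\scal y\xi)}V_\phi a(x,\xi,\eta,y)\bigr)=e^{i(\scal x\eta+\scal y\xi)}V_\phi(\partial_x^\alpha\partial_\xi^\beta a)(x,\xi,\eta,y),
\]
which comes out by changing variables $z=w-x$, $\zeta=u-\xi$ in the definition of $V_\phi a$ to rewrite the left-hand side as $(2\pi)^{-d}\iint a(z+x,\zeta+\xi)\overline{\phi(z,\zeta)}e^{-i(\scal z\eta+\scal\zeta y)}\,dzd\zeta$, differentiating under the integral sign, and then reverting the change of variables. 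The verification of (2) is thus reduced to bounding $|V_\phi b|$ for $b:=\partial_x^\alpha\partial_\xi^\beta a$.

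The next step is to observe that $b$ again belongs to $\Gamma^{\sigma,s;0}_{(\omega)}(\rr{2d})$, with quantitative dependence producing exactly the desired prefactor. Using the elementary inequality $(\alpha+\gamma)!\le 2^{|\alpha+\gamma|}\alpha!\,\gamma!$ together with the hypothesis on $a$, one obtains, with $h_1=2^{\max(\sigma,s)}h_0$ and $h_0>0$ arbitrary,
\[
|\partial_x^\gamma\partial_\xi^\delta b(x,\xi)|\le C_{h_0}'\bigl(h_1^{|\alpha+\beta|}\alpha!^\sigma\beta!^s\bigr)\,h_1^{|\gamma+\delta|}\gamma!^\sigma\delta!^s\,\omega(x,\xi).
\]
Applying Proposition \ref{Prop:WeightedGSCharSTFT}(1) to $b$ and tracking that the bracketed constant propagates linearly through its proof, one obtains $|V_\phi b(x,\xi,\eta,y)|\lesssim h_1^{|\alpha+\beta|}\alpha!^\sigma\beta!^s\omega(x,\xi)e^{-R(|y|^{1/s}+|\eta|^{1/\sigma})}$ for all $h_1,R>0$; combined with the commutation identity, this is precisely (2).

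The main obstacle will be this final quantitative step, since Proposition \ref{Prop:WeightedGSCharSTFT} is stated only qualitatively. I would need to revisit its proof--in which the STFT is recast as the Fourier transform of the normalized family $F_X(Y)=b(Y+X)\overline{\phi(Y)}/\omega(X)$, and the decay of $\widehat{F_X}$ is deduced from the uniform boundedness of $\{F_X\}$ in $\Sigma_{s,\sigma}^{\sigma,s}(\rr{2d})$--and check that a multiplicative constant in the Gelfand--Shilov-type seminorms of $F_X$ transfers as the same multiplicative constant in the $L^\infty$-decay of $\widehat{F_X}$. This bookkeeping is routine but indispensable, and once it is carried out the full statement follows.
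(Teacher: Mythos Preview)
Your proposal is correct and follows essentially the same route as the paper. The paper works directly with $F_a(x,\xi,y,\eta)=a(x+y,\xi+\eta)\overline{\phi(y,\eta)}$, observes that the phase-modulated STFT equals the partial Fourier transform of $F_a$ in $(y,\eta)$, and then shows that the normalized family $\partial_x^\alpha\partial_\xi^\beta F_a/(h^{|\alpha+\beta|}\alpha!^\sigma\beta!^s\omega(x,\xi))$ is bounded in $\Sigma_{s,\sigma}^{\sigma,s}$---which is exactly your commutation identity combined with the quantitative rerun of Proposition~\ref{Prop:WeightedGSCharSTFT}; for $(3)\Rightarrow(1)$ the paper likewise defers to the argument of Proposition~\ref{prop1}, matching your appeal to Proposition~\ref{Prop:WeightedGSCharSTFT}(2).
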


\par

\begin{proof}
Obviously, (2) implies (3). Assume now that (1) holds. Let 
$$
F_a(x,\xi ,y,\eta )=a(x+y,\xi +\eta )\phi(y,\eta ).
$$
By straightforward application of Leibniz rule in combination with
\eqref{estomega} we obtain 
$$
|\partial _x^\alpha \partial _\xi ^\beta F_a(x,\xi ,y,\eta )|
\lesssim
h^{|\alpha + \beta |}\alpha !^\sigma  \beta !^s
\omega(x,\xi )e^{-R(|y|^{\frac 1s}+|\eta |^{\frac 1\sigma})}
$$
for every $h >0$ and $ R>0$.
Hence, if 
$$
G_{a, h, x,\xi }(y,\eta )= \frac{\partial _x^\alpha \partial _\xi ^\beta F_a(x,\xi ,y,\eta )}
{h^{|\alpha +\beta |}\alpha !^\sigma \beta !^s\omega(x,\xi )},
$$
then $\{ G_{a, h, x,\xi } \}_{x,\xi  \in \mathbf{R}^{d}}$ is a bounded set in
$\Sigma _{s,\sigma }^{\sigma ,s}(\mathbf{R}^{2d})$ for every fixed $h >0$. If
$\mascF _2 F_a$
is the partial Fourier transform of $F_a(x,\xi ,y,\eta )$ with respect to the $(y,\eta )$-variable,
we get
$$
|\partial _x^\alpha \partial _\xi ^\beta (\mascF _2 F_a)(x,\xi , \zeta, z)|
\lesssim
h^{|\alpha +\beta |}\alpha!^\sigma \beta !^s
\omega(x,\xi )e^{-R(|z|^{\frac 1s} + |\zeta|^{\frac 1\sigma })},
$$
for every $h>0$ and $R>0$. This is the same as (2).

\par

It remains to prove that (3) implies (1), but this follows by similar arguments as in the
proof of Proposition \ref{prop1}.
The details are left for the reader.
\end{proof}

\par

\begin{prop}\label{SymbClassModSpace}
Let $R>0$, $q\in [1,\infty ]$, $s,\sigma >0$ be such that $s+\sigma \ge 1$
and $(s,\sigma )\neq (\frac 12,\frac 12)$, $\phi \in \Sigma _{s,\sigma}^{\sigma ,s}
(\rr {2d})\setminus 0$,
$\omega \in \mascP _{s,\sigma}(\mathbf{R}^{2d}),$ and let
$$
\omega _R(x,\xi, \eta, y) = \omega(x,\xi) e^{-R(|y|^{\frac 1s} + |\eta|^{\frac 1\sigma})}.
$$
Then
\begin{equation}\label{iden}
\Gamma^{\sigma ,s;0}_{(\omega)}(\rr {2d})
=
\bigcap _{R>0}\sets {a\in (\Sigma _{s,\sigma}^{\sigma ,s})'
(\rr {2d})}{\nm {\omega
_R^{-1}V_\phi a}{L^{\infty ,q}} <\infty }.
\end{equation}
\end{prop}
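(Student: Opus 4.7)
Setting $\rho(Y) = |y|^{\frac 1s} + |\eta|^{\frac 1\sigma}$ for $Y=(\eta,y)\in \rr{2d}$ and $\rho_1(X) = |x|^{\frac 1s} + |\xi|^{\frac 1\sigma}$ for $X=(x,\xi)\in \rr{2d}$, the plan is to use Proposition \ref{Prop:WeightedGSCharSTFT} (in its ``for every $h>0$'' form) as the bridge: that proposition characterizes membership in $\Gamma^{\sigma,s;0}_{(\omega)}(\rr{2d})$ by the validity of
$$
|V_\phi a(X,Y)| \lesssim \omega(X) e^{-r\rho(Y)}
$$
for every $r>0$. I shall refer to this property as $(\ast)$. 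Thus \eqref{iden} reduces to the equivalence of $(\ast)$ with the family of estimates $\|\omega_R^{-1}V_\phi a\|_{L^{\infty,q}}<\infty$ for every $R>0$, where $L^{\infty,q}$ is understood as $L^\infty$ in $X$ and $L^q$ in $Y$.

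The forward direction is immediate: given $(\ast)$, choosing $r=R+1$ yields $\omega_R(X,Y)^{-1}|V_\phi a(X,Y)| \lesssim e^{-\rho(Y)}$ uniformly in $X$, so that the $\sup_X$ leaves an exponentially decaying function of $Y$, which lies in $L^q$ for every $q\in[1,\infty]$.

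For the reverse inclusion I invoke the STFT reproducing formula, whose absolute-value version reads
$$
|V_\phi a(X_0,Y_0)| \lesssim \iint |V_\phi a(X,Y)|\,|V_\phi \phi(X_0-X,Y_0-Y)|\,dX\,dY.
$$
The hypothesis produces $H_R(Y) := \sup_X \omega(X)^{-1}e^{R\rho(Y)}|V_\phi a(X,Y)|\in L^q(\rr{2d})$, so $|V_\phi a(X,Y)|\le \omega(X)e^{-R\rho(Y)}H_R(Y)$. Since $\phi\in \Sigma_{s,\sigma}^{\sigma,s}(\rr{2d})$, Proposition \ref{Prop:STFTGelfand2} gives $|V_\phi\phi(X,Y)|\lesssim e^{-r'(\rho_1(X)+\rho(Y))}$ for every $r'>0$. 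Combining this with the submultiplicativity $\omega(X)\lesssim \omega(X_0)e^{r_0\rho_1(X-X_0)}$ of $\omega\in \mascP_{s,\sigma}$ (with $r_0$ inherent to $\omega$) and with the reverse triangle inequality $\rho(Y)\ge \rho(Y_0)/\kappa_0 - \rho(Y_0-Y)$, where $\kappa_0=\max(\kappa(s^{-1}),\kappa(\sigma^{-1}))$, and then changing variables to $Y_0-Y$ and $X_0-X$, the $X$-integral converges once $r'>r_0$ and leaves an arbitrarily rapidly decaying factor in $Y_0-Y$. Applying H\"older's inequality in $Y$ with conjugate exponents $q,q'$, after choosing $r'-R$ large, produces
$$
|V_\phi a(X_0,Y_0)| \le C_R\,\omega(X_0)\,e^{-R\rho(Y_0)/\kappa_0}.
$$
Given any $r>0$, taking $R=r\kappa_0$ recovers $(\ast)$ at level $r$, and Proposition \ref{Prop:WeightedGSCharSTFT} places $a$ in $\Gamma^{\sigma,s;0}_{(\omega)}(\rr{2d})$.

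The main obstacle is the weight splitting: the factor $e^{-R\rho(Y)}$ in $\omega_R$ is not submultiplicative, and the reverse triangle inequality costs a factor $1/\kappa_0$ in the output exponent. This explains why the right-hand side of \eqref{iden} must quantify over \emph{every} $R>0$---only by letting $R$ sweep the positive reals can one recover all decay rates $r>0$ in $(\ast)$. The remaining ingredients---the reproducing formula, the Gelfand--Shilov decay of $V_\phi\phi$ from Proposition \ref{Prop:STFTGelfand2}, and the mixed-norm H\"older inequality---are routine.
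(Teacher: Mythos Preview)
Your argument is correct and follows essentially the same route as the paper: both proofs rely on the convolution inequality $|V_\phi a|\lesssim |V_{\phi_0}a|*|V_\phi\phi_0|$ (your reproducing formula is the special case $\phi_0=\phi$), the Gelfand--Shilov decay of $V_\phi\phi$, the moderate estimate \eqref{estomega} for $\omega$, and a H\"older step to pass between $L^{\infty,q}$ and $L^\infty$, with the loss of the constant $\kappa_0$ absorbed by the intersection over all $R>0$. The only organisational difference is that the paper phrases its computation as showing the $L^{\infty,q}$ and $L^\infty$ conditions are equivalent (with a possible change of window), then appeals to the $q=\infty$ case implicitly via Proposition~\ref{prepalg}, whereas you invoke Proposition~\ref{Prop:WeightedGSCharSTFT} up front and carry out only the direction that is not trivial---this is a matter of presentation, not substance.
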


\par

\begin{proof}
Let $\phi _0\in \Sigma _{s,\sigma}^{\sigma ,s}(\rr {2d})\setminus 0$, 
$a \in (\Sigma _{s,\sigma}^{\sigma ,s})'(\mathbf{R}^{2d})$, and set
\begin{gather*}
F_{0,a}(X,Y)=|(V_{\phi _0}a)(x,\xi, \eta, y)|,
\quad
F_{a}(X,Y)=|(V_{\phi}a)(x,\xi, \eta, y)|
\\[1ex]
\text{and}\quad 
G(x,\xi, \eta,y)=|(V_\phi {\phi _0})(x,\xi, \eta, y)|,
\end{gather*}
where $X=(x,\xi)$ and 
$Y=(y,\eta)$. Since $V_\phi {\phi _0}\in \Sigma _{s,\sigma}^{\sigma ,s}(\rr {4d})$, we have
\begin{equation}\label{GEst1}
0\le G(x,\xi, \eta,y) \lesssim e^{-R(|x|^{\frac 1s}+|\xi|^{\frac 1\sigma}+|\eta|^{\frac 1\sigma}+|y|^{\frac 1s})}
\quad \text{for every}\quad
R>0 .
\end{equation}

\par

By \cite[Lemma 11.3.3]{Gro}, we have
$F_a \lesssim F_{0,a} \ast G$.
We obtain 
\begin{multline}\label{pippo}
(\omega_R^{-1} \cdot F_a)(X,Y)
\\
\lesssim \omega(X)^{-1}e^{R(|y|^{\frac 1s}+|\eta|^{\frac 1\sigma})}
\iint_{\rr {4d}} F_{0,a}(X-X_1, Y-Y_1)G(X_1, Y_1)\, dX_1 dY_1 
\\
\lesssim \iint_{\rr {4d}} (\omega^{-1}_{cR} \cdot F_{0,a})(X-X_1, Y-Y_1)G_1(X_1,Y_1)\,  dX_1 dY_1
\end{multline}
for some $G_1$ satisfying \eqref{GEst1} in place of  $G$ and some $c>0$ independent of $R$.
By applying the $L^\infty$-norm on the last inequality we get
\begin{multline*}
\nm {\omega _R^{-1}F_a}{L^{\infty}(\rr {4d})}
\\[1ex]
\lesssim \sup_{Y}\left(\iint_{\rr {4d}} 
\big ( \sup (\omega^{-1}_{cR} \cdot F_{0,a})(\cdot , Y-Y_1)\big )
G_1(X_1, Y_1)\,  dX_1 dY_1\right)
\\[1ex]
\le
\sup _Y \left( \nm {(\omega^{-1}_{cR} \cdot F_{0,a})(\cdot -(0,Y)}{L^{\infty ,q}}\right)
\nm {G_1}{L^{1,q'}}
\asymp \nm {\omega^{-1}_{cR} \cdot F_{0,a}}{L^{\infty ,q}}.
\end{multline*}

\par

We only consider the case $q<\infty$ when proving the opposite inequality.
The case $q=\infty$ follows by similar arguments and is left for the reader.

\par

By \eqref{pippo} we have 
\[
\| \omega^{-1}_{R} \cdot F_{a} \| _{L^{\infty,q}}^q
\lesssim  \int_{\rr {2d}} \left(\sup H(\cdo,Y)^q  \right)\,dY,
\]
where $H=K_1*G$ and $K_j=\omega^{-1}_{jcR} \cdot F_{0,a},\,j\ge1.$

By Minkowski's inequality, letting $Y_1=(y_1,\eta_1)$ as variables of integration, we get
\begin{multline*}
\sup_X H(X,Y)
\\[1ex]
\lesssim  \iint_{\rr {4d}} \big (\sup K_2(\cdo, Y-Y_1)\big )
e^{-cR(|y-y_1|^{\frac 1s}+|\eta-\eta_1|^{\frac 1\sigma})}
G(X_1, Y_1)\, dX_1 dY_1 
\\[1ex]
\lesssim \| K_2 \|_{L^\infty}
\iint_{\rr {4d}}  e^{-cR(|y-y_1|^{\frac 1s}+|\eta-\eta_1|^{\frac 1\sigma})}
G(X_1, Y_1)\, dX_1 dY_1.
\end{multline*}
By combining these estimates we get
\begin{multline*}
    \| \omega^{-1}_{R} \cdot F_{a} \|_{L^{\infty,q}}^q
    \\[1ex]
\lesssim \|K_2 \|_{L^\infty}^q\int_{\rr {2d}}\left(\iint_{\rr {4d}} 
e^{-cR(|y-y_1|^{\frac 1s}+|\eta-\eta_1|^{\frac 1\sigma})}
G(X_1, Y_1) \, dX_1dY_1 \right)\, dY
 \\[1ex]
 \asymp \|K_2 \|_{L^\infty}^q.
\end{multline*}
That is,

$$
\| \omega^{-1}_{R} \cdot F_{a} \| _{L^{\infty,q}} 
\lesssim 
\| \omega^{-1}_{2cR} \cdot F_{0,a} \|_{L^\infty},
$$
and the result follows.
\end{proof}

\par


\par

\begin{proof}[Proof of Theorem \ref{Thm:CalculiTransfbis}]
The case $s=\sigma =\frac 12$ follows from \cite[Theorem 4.1]{CaTo}.
We may therefore assume that $(s,\sigma)\neq(\frac 12, \frac 12)$.
Let $\phi \in \Sigma _{s,\sigma}^{\sigma, s}(\rr {2d})$ and
$\phi _A = e^{i\scal {AD_\xi }{D_x}}\phi$. Then
$\phi _A\in \Sigma  _{s,\sigma}^{\sigma, s}(\rr {2d})$, in view
of (2) in Theorem \ref{Thm:CalculiTransf}.

\par

Also let
$$
\omega _{A,R}(x,\xi ,\eta ,y) = \omega (x-Ay,\xi -A^*\eta )
e^{-R(|y|^{\frac 1s}+|\eta |^{\frac 1\sigma})}.
$$
By straight-forward applications of Parseval's formula, we get
$$
|(V_{\phi _A} (e^{i\scal {AD_\xi}{D_x}}a))(x,\xi ,\eta ,y)|
=
|(V_\phi a)(x-Ay,\xi -A^*\eta ,\eta ,y)|
$$
(cf. Proposition 1.7 in \cite{To7} and its proof). This gives
$$
\nm {\omega _{0,R}^{-1}V_{\phi} a}{L^{p,q}} =
\nm {\omega _{A,R}^{-1}V_{\phi _A} (e^{i\scal {AD_\xi }{D_x}}a)}{L^{p,q}}.
$$
Hence Proposition \ref{SymbClassModSpace} gives
\begin{alignat*}{1}
a\in\Gamma _{(\omega)} ^{\sigma,s;0}(\rr {2d})
\quad &\Leftrightarrow \quad
\nm {\omega _{0,R}^{-1}V_\phi a}{L^\infty}<\infty \quad
\text{for every}
\ R>0
\\[1ex]
&\Leftrightarrow \quad
\nm {\omega _{t,R}^{-1}V_{\phi _A} (e^{i\scal
{AD_\xi}{D_x}}a)}{L^\infty} <\infty \quad
\text{for every} \ R>0
\\[1ex]
&\Leftrightarrow \quad
\nm {\omega _{0,R}^{-1}V_{\phi _A} (e^{i\scal {AD_\xi}{D_x}}a)}{L^\infty}
<\infty \quad
\text{for every} \ R>0
\\[1ex]
&\Leftrightarrow \quad
e^{i\scal {AD_\xi}{D_x}}a \in
\Gamma _{(\omega)} ^{\sigma,s;0}(\rr {2d}), 
\end{alignat*}
and the result follows in this case. Here the third equivalence follows from the fact that
$$
\omega _{0,R+c}\lesssim \omega _{t,R}\lesssim \omega _{0,R-c},
$$
for some $c>0$.

\par

\end{proof}

\par

\par 

We note that if $A,B \in \MdR$ and $a,b\in 
(\maclS _{s,\sigma}^{\sigma ,s})'(\rr {2d})$ or $a,b\in (\Sigma
_{s,\sigma}^{\sigma ,s})'(\rr {2d})$, then
the first part of the previous proof shows that
\begin{equation}\label{EqCalculiTransfer}
\op _A(a) = \op _B(b)\quad \Leftrightarrow \quad 
e^{i\scal {AD_\xi}{D_x}}a = e^{i\scal {BD_\xi}{D_x}}b .
\end{equation}
%
%

\par

The following result follows from Theorems \ref{Thm:CalculiTransf}
and \ref{Thm:CalculiTransfbis}. The details are left for the reader.

\par

\begin{thm}\label{ThmCalculiTransf2}
Let $s,s_1,s_2,\sigma ,\sigma _1,\sigma _2>0$ be such that
$$
s+\sigma \ge 1,\quad
s_1+\sigma _1\ge 1,\quad
s_2+\sigma _2\ge 1,\quad s_2\le s_1
\quad \text{and}\quad
\sigma _1\le \sigma _2,
$$
$A, B \in \MdR$, $\omega \in \mascP _{s,\sigma}(\rr {2d})$,
and let $a$ and $b$
be Gelfand-Shilov distributions such that $\op _A(a)=\op _B(b)$.
Then the following is true:
\begin{enumerate}
\item $a\in \maclS ^{\sigma _1,s_2}_{s_1,\sigma_2}(\rr {2d})$
if and only if $b\in
\maclS ^{\sigma _1,s_2}_{s_1,\sigma _2}(\rr {2d})$, and
$a\in (\maclS ^{\sigma _1,s_2}_{s_1,\sigma _2})'(\rr {2d})$
if and only if $b\in (\maclS ^{\sigma _1,s_2}_{s_1,\sigma _2})'
(\rr {2d})$;

\vrum

\item $a\in \Sigma
^{\sigma _1,s_2}_{s_1,\sigma _2}(\rr {2d})$ if and only if $b\in
\Sigma ^{\sigma _1,s_2}_{s_1,\sigma _2}(\rr {2d})$,
and $a\in (\Sigma ^{\sigma _1,s_2}_{s_1,\sigma _2})'(\rr {2d})$
if and only if $b\in (\Sigma ^{\sigma _1,s_2}
_{s_1,\sigma _2})'(\rr {2d})$;

\vrum

\item $a\in \Gamma _{s,\sigma ;0}^{\sigma ,s}(\rr {2d})$ if and only if
$b\in \Gamma _{s,\sigma ;0}^{\sigma ,s}(\rr {2d})$. If in addition
$(s,\sigma )\neq (\frac 12,\frac 12)$, then 
$a\in \Gamma _{s,\sigma}^{\sigma ,s;0}(\rr {2d})$ if and only if $b\in
\Gamma _{s,\sigma}^{\sigma ,s;0}(\rr {2d})$, and
$a\in \Gamma _{s,\sigma ;0}^{\sigma ,s;0}(\rr {2d})$ if and only if $b\in
\Gamma _{s,\sigma ;0}^{\sigma ,s;0}(\rr {2d})$;

\vrum

\item $a\in \Gamma _{(\omega)} ^{\sigma ,s;0}(\rr {2d})$ if and only
if $b\in \Gamma _{(\omega)} ^{\sigma ,s;0}(\rr {2d})$.
\end{enumerate}
\end{thm}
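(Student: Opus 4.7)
The plan is to reduce everything to the equivalence displayed in \eqref{EqCalculiTransfer}, namely that $\op _A(a)=\op _B(b)$ is the same as $e^{i\scal {AD_\xi}{D_x}}a = e^{i\scal {BD_\xi}{D_x}}b$, and then to apply the homeomorphism statements already collected in Theorems \ref{Thm:CalculiTransf} and \ref{Thm:CalculiTransfbis}. Writing $c:=e^{i\scal {AD_\xi}{D_x}}a = e^{i\scal {BD_\xi}{D_x}}b$, the goal becomes to show that for each symbol class $X$ among those listed in (1)--(4), membership of $a$ in $X$ is equivalent to membership of $b$ in $X$; equivalently that $c \in X$ if and only if $a \in X$ (and similarly for $b$), and hence the invariance under $e^{\pm i\scal {CD_\xi}{D_x}}$ for any $C\in \MdR$ is what is needed.

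For (1) I would invoke Theorem \ref{Thm:CalculiTransf}(1), which under the hypotheses $s_j+\sigma _j\ge 1$, $s_2\le s_1$ and $\sigma _1\le \sigma _2$ gives that $e^{i\scal {AD_\xi}{D_x}}$ and $e^{i\scal {BD_\xi}{D_x}}$ are homeomorphisms on $\maclS _{s_1,\sigma _2}^{\sigma _1,s_2}(\rr {2d})$ and extend uniquely to homeomorphisms on its dual. Hence $a\in \maclS _{s_1,\sigma _2}^{\sigma _1,s_2}$ iff $c=e^{i\scal {AD_\xi}{D_x}}a$ lies in that space iff $b=e^{-i\scal {BD_\xi}{D_x}}c$ lies in it, and analogously on the dual side. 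Case (2) is handled in exactly the same manner, now using Theorem \ref{Thm:CalculiTransf}(2) once the additional restriction $(s_j,\sigma _j)\neq (\tfrac 12,\tfrac 12)$ is in force.

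For (3), I would again compose $e^{-i\scal {BD_\xi}{D_x}}\circ e^{i\scal {AD_\xi}{D_x}}$ and read off the invariance of $\Gamma _{s,\sigma ;0}^{\sigma ,s}(\rr {2d})$ from Theorem \ref{Thm:CalculiTransf}(3); under the extra assumption $(s,\sigma )\neq (\tfrac 12,\tfrac 12)$ the two remaining classes $\Gamma _{s,\sigma}^{\sigma ,s;0}(\rr {2d})$ and $\Gamma _{s,\sigma ;0}^{\sigma ,s;0}(\rr {2d})$ are covered by Theorem \ref{Thm:CalculiTransf}(4). Finally, (4) is a direct consequence of Theorem \ref{Thm:CalculiTransfbis}, which asserts that $\Gamma _{(\omega )}^{\sigma ,s;0}(\rr {2d})$ is invariant under $e^{i\scal {CD_\xi}{D_x}}$ for every $C\in \MdR$ and every $\omega \in \mascP _{s,\sigma}(\rr {2d})$; applying this with $C=A$ and then with $C=-B$ yields the claimed equivalence.

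The only real delicacy, not an obstacle so much as a careful bookkeeping point, is to check that \eqref{EqCalculiTransfer} actually applies in each case, i.e.\ that both $a$ and $b$ live in one of the ambient distribution spaces in which the identity was derived: for (1) and (3)--(4) one observes $a,b\in (\maclS _{s,\sigma}^{\sigma ,s})'(\rr {2d})$ by the embeddings in \eqref{Eq:GSEmbeddings}, while for (2) the ambient $(\Sigma _{s,\sigma}^{\sigma ,s})'(\rr {2d})$ is adequate. With that verified, every implication is a two-step composition of the corresponding homeomorphism assertion with its inverse, and the rest is entirely mechanical, which is why the paper leaves the details to the reader.
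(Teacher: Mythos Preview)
Your proposal is correct and follows exactly the approach the paper indicates, namely reducing to \eqref{EqCalculiTransfer} and then invoking the homeomorphism statements of Theorems \ref{Thm:CalculiTransf} and \ref{Thm:CalculiTransfbis}. One small correction to your final bookkeeping paragraph: for the $\Gamma _{s,\sigma}^{\sigma ,s;0}$ part of (3) and for (4) with a general $\omega \in \mascP _{s,\sigma}$, the symbols are only guaranteed to lie in $(\Sigma _{s,\sigma}^{\sigma ,s})'(\rr {2d})$ rather than $(\maclS _{s,\sigma}^{\sigma ,s})'(\rr {2d})$, but since \eqref{EqCalculiTransfer} is explicitly stated for both ambient duals this changes nothing.
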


\par

\subsection{Continuity for pseudo-differential operators with symbols
of infinite order on Gelfand-Shilov spaces of functions and
distributions}

\par

Next we deduce continuity for pseudo-differential operators with symbols
in the classes in Definitions \ref{Def:GammaSymb2}
and \ref{Def:ExtGSSymbClasses}. We begin with the case when
the symbols belong to $\Gamma _{s,\sigma ;0}^{\sigma ,s}(\rr {2d})$.

\par

\begin{thm}\label{Thm:theorem2}
Let $A \in \MdR$, $s,\sigma >0$ be such that $s+\sigma \ge 1$, and let
$a\in \Gamma _{s,\sigma ;0}^{\sigma ,s}(\rr {2d})$. Then $\op_A (a)$
is continuous on $\maclS _s^\sigma (\rr d)$ and on
$(\maclS _s^\sigma )'(\rr d)$.
\end{thm}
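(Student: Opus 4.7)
The plan is to use the STFT characterizations from Section \ref{sec2} together with the invariance of the symbol class under change of representation. By Theorem \ref{ThmCalculiTransf2} (3), the class $\Gamma _{s,\sigma ;0}^{\sigma ,s}(\rr {2d})$ is preserved by the transform $a \mapsto e^{i\scal {AD_\xi}{D_x}}a$ that intertwines different quantisations, so it suffices to treat the Kohn--Nirenberg case $A = 0$. Fix windows $\phi \in \Sigma _s^\sigma (\rr d)\setminus 0$ and $\Phi \in \Sigma _{s,\sigma}^{\sigma ,s}(\rr {2d})\setminus 0$. By Proposition \ref{Prop:STFTGelfand2}, the continuity assertion on $\maclS _s^\sigma (\rr d)$ reduces to the following implication: if $|V_\phi f(y,\eta )| \lesssim e^{-R(|y|^{\frac 1s}+|\eta |^{\frac 1\sigma })}$ for some $R>0$, then
$$
|V_\phi (\op _0(a)f)(x,\xi )| \lesssim e^{-R'(|x|^{\frac 1s}+|\xi |^{\frac 1\sigma })}
$$
for some $R'>0$, with the estimate uniform for $f$ ranging over a bounded subset of $\maclS _s^\sigma$.

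The heart of the argument is a kernel representation
$$
V_\phi (\op _0(a)f)(x,\xi ) = \iint _{\rr {2d}} K(x,\xi ;y,\eta )\, V_\phi f(y,\eta )\, dy\, d\eta ,
$$
obtained by inserting the STFT inversion formula for $f$ into $\op _0(a)f$, taking the STFT of the resulting expression, and rearranging the oscillatory integrals via Fubini and Parseval. The kernel $K$ can be expressed in terms of $V_\Phi a$ evaluated at time-frequency shifts, so Proposition \ref{prop2'} provides $h>0$ such that, for every $r>0$,
$$
|V_\Phi a(x,\xi ,\eta ,y)| \lesssim e^{r(|x|^{\frac 1s}+|\xi |^{\frac 1\sigma }) - h(|\eta |^{\frac 1\sigma }+|y|^{\frac 1s})}.
$$
Combining this with the decay of $V_\phi f$ and the sub-additivity inequality $|x+y|^{\frac 1s}\le \kappa (s^{-1})(|x|^{\frac 1s}+|y|^{\frac 1s})$ (and its analogue with $\sigma$), the integrand of the kernel representation is controlled by an $L^1$-integrable profile in $(y,\eta )$ times a factor growing in $(x,\xi )$ at rate proportional to $r\kappa (s^{-1})$. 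Since $r$ is free, choosing it small enough relative to $R$ gives the target estimate for $V_\phi (\op _0(a)f)$, and tracking the implicit constants yields continuity on $\maclS _s^\sigma (\rr d)$.

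Continuity on $(\maclS _s^\sigma )'(\rr d)$ is then deduced by duality: the formal transpose of $\op _A(a)$ is again a pseudo-differential operator whose symbol lies in $\Gamma _{s,\sigma ;0}^{\sigma ,s}(\rr {2d})$, since this class is stable under $\xi \mapsto -\xi$ and, by Theorem \ref{ThmCalculiTransf2} (3), under any change of quantisation required to return the transpose to the form $\op _{\widetilde A}(\widetilde a)$. Applying the first part of the proof to the transpose and transposing back yields continuity of $\op _A(a)$ on $(\maclS _s^\sigma )'$. The main obstacle is the kernel step: since compactly supported functions are generally absent from $\maclS _s^\sigma (\rr d)$, the usual micro-local cut-off arguments are unavailable, and one must carefully track the trade-off between the ``for every $r$'' growth of $V_\Phi a$ in position and the ``for some $R$'' decay of $V_\phi f$, absorbing the $\kappa (s^{-1})$-factors produced by translations within the exponential estimates.
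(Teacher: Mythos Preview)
Your approach is correct in spirit and carries through, but it is \emph{not} the route taken in the paper. The paper proves this theorem by direct seminorm estimates: after reducing to $A=0$, it writes $\op_0(a)f(x)$ as an oscillatory integral, inserts the multiplier identity
\[
\frac{1}{m_{s,\tau}(x)}\sum_{j\ge 0}\frac{\tau^j}{(j!)^{2s}}(1-\Delta_\xi)^j e^{i\scal x\xi}=e^{i\scal x\xi},
\]
and uses Lemmas \ref{Lemma:theorem2} and \ref{Lemma:mstEst} to bound $|x^\alpha D_x^\beta(\op_0(a)f)(x)|$ by $Ch^{|\alpha+\beta|}\alpha!^s\beta!^\sigma$ for some $h>0$ uniform over bounded sets of $\maclS_s^\sigma$. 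No short-time Fourier transform enters. Your argument instead mirrors what the paper does for the \emph{Beurling} companion Theorem \ref{Thm:theorem1}: a kernel representation for $V_\phi(\op_0(a)f)$ in terms of $V_\psi K_{0,a}$ (Lemma \ref{Lemma:STFTSymbolKernel}) combined with the STFT characterization of the symbol class (Proposition \ref{prop2'} / \ref{Prop:prop2'B}). In the Roumieu situation the bookkeeping is the mirror image: the kernel decays at a \emph{fixed} rate $h$ in the smoothing directions while growing at an \emph{arbitrarily small} rate $r$ in the others, and $V_\phi f$ decays at a fixed rate $R$; convolving the resulting exponentials yields decay of order roughly $\min(h,R)/\kappa$ in $(x,\xi)$, which is what you need. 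So the method works.

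Two comparative remarks. First, your choice of windows $\phi\in\Sigma_s^\sigma\setminus 0$, $\Phi\in\Sigma_{s,\sigma}^{\sigma,s}\setminus 0$ is needlessly restrictive and actually fails at $(s,\sigma)=(\tfrac12,\tfrac12)$, where $\Sigma_s^\sigma=\{0\}$; Propositions \ref{Prop:STFTGelfand2}(1) and \ref{prop2'} only require $\maclS$-windows, which exist for all $s+\sigma\ge 1$, so take $\phi\in\maclS_s^\sigma\setminus 0$ and $\Phi\in\maclS_{s,\sigma}^{\sigma,s}\setminus 0$. The paper's multiplier approach sidesteps this issue entirely. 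Second, the paper's method gives explicit control on the Gelfand--Shilov seminorms and is self-contained, whereas your method is more uniform with the STFT machinery of Sections \ref{sec2}--\ref{sec3} and avoids the auxiliary function $m_{s,\tau}$. Both duality arguments for $(\maclS_s^\sigma)'$ are the same.
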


\par

For the proof we need the following result.

\par

\begin{lemma}\label{Lemma:theorem2}
Let $s,\sigma >0$ be such that $s+\sigma \ge 1$,
$h_1>0$, $\Omega _1$ be a bounded set in
$\maclS_{s;h_1}^\sigma (\rr d)$, and let
$$
h_2\ge 2^{1+s}h_1
\quad \text{and}\quad
h_3\ge 2^{2+2s}h_1.
$$
Then
\begin{align*}
\Omega _2 &= \Sets {x\mapsto \frac {x^\gamma f(x)}
{(2^{1+s}h_1)^{|\gamma |} \gamma !^s}}
{f\in \Omega _1,\ \gamma \in \nn d}
\intertext{is a bounded set in $S_{s;h_2}^\sigma (\rr d)$,
and}
\Omega _3 &= \Sets {x\mapsto \frac {D^\delta x^\gamma f(x)}
{(2^{2+2s}h_1)^{|\gamma +\delta |} \gamma !^s\delta !^\sigma}}
{f\in \Omega _1,\ \gamma ,\delta \in \nn d}.
\end{align*}
is a bounded sets in $S_{s;h_3}^\sigma (\rr d)$.
\end{lemma}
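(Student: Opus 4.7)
The proof is a direct Leibniz-rule computation. Fix $f\in\Omega_1$ and $\gamma\in\nn d$, and set $g(x)=x^\gamma f(x)/\bigl((2^{1+s}h_1)^{|\gamma|}\gamma!^s\bigr)$, the generic element of $\Omega_2$. First apply Leibniz to obtain
\[
x^\alpha\partial^\beta\bigl(x^\gamma f(x)\bigr)=\sum_{\mu\le\min(\beta,\gamma)}\binom{\beta}{\mu}\frac{\gamma!}{(\gamma-\mu)!}\,x^{\alpha+\gamma-\mu}\partial^{\beta-\mu}f(x),
\]
and estimate each summand by $|x^{\alpha+\gamma-\mu}\partial^{\beta-\mu}f(x)|\lesssim h_1^{|\alpha|+|\gamma|+|\beta|-2|\mu|}(\alpha+\gamma-\mu)!^s(\beta-\mu)!^\sigma$, with a constant uniform over $f\in\Omega_1$.

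The next step is to collapse the factorials via the standard multi-factorial inequalities: $(\alpha+\gamma-\mu)!^s\le 2^{s(|\alpha|+|\gamma|-|\mu|)}\alpha!^s(\gamma-\mu)!^s$; $(\gamma-\mu)!^s\mu!^s\le\gamma!^s$ and $(\beta-\mu)!^\sigma\mu!^\sigma\le\beta!^\sigma$ (both consequences of $\rho!\,\mu!\le(\rho+\mu)!$); $\binom{\beta}{\mu}\le 2^{|\beta|}$; and $\gamma!/(\gamma-\mu)!=\binom{\gamma}{\mu}\mu!\le 2^{|\gamma|}\mu!$. Multiplied together, they yield
\[
\binom{\beta}{\mu}\frac{\gamma!}{(\gamma-\mu)!}(\alpha+\gamma-\mu)!^s(\beta-\mu)!^\sigma\le 2^{|\beta|+|\gamma|+s(|\alpha|+|\gamma|-|\mu|)}\alpha!^s\beta!^\sigma\gamma!^s\,\mu!^{1-s-\sigma}.
\]
The hypothesis $s+\sigma\ge 1$ enters in exactly one place, forcing $\mu!^{1-s-\sigma}\le 1$. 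The summand reshapes to $C\alpha!^s\beta!^\sigma\gamma!^s(2h_1)^{|\beta|}(2^{1+s}h_1)^{|\gamma|}(2^sh_1)^{|\alpha|}(h_1^{-2}2^{-s})^{|\mu|}$, and the residual $\mu$-sum $\sum_{\mu}(h_1^{-2}2^{-s})^{|\mu|}$ is a convergent geometric-type series, bounded by a constant depending only on $h_1,s,d$ (after, if needed, harmlessly enlarging $h_1$ via the continuous inclusion $\maclS^\sigma_{s;h_1}\hookrightarrow\maclS^\sigma_{s;h_1'}$). Dividing by $(2^{1+s}h_1)^{|\gamma|}\gamma!^s\cdot h_2^{|\alpha+\beta|}\alpha!^s\beta!^\sigma$ and using the hypothesis $h_2\ge 2^{1+s}h_1$, the surviving ratios $(2h_1/h_2)^{|\beta|}$ and $(2^sh_1/h_2)^{|\alpha|}$ are each at most $1$, and one gets the required uniform seminorm bound on $\Omega_2$.

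For the second assertion, use the identity $\partial^\beta D^\delta=(-i)^{|\delta|}\partial^{\beta+\delta}$ to reduce to the first. The previous estimate, applied with $\beta$ replaced by $\beta+\delta$, produces the factor $(\beta+\delta)!^\sigma\le 2^{\sigma(|\beta|+|\delta|)}\beta!^\sigma\delta!^\sigma$, so the new $(2\cdot 2^\sigma h_1)^{|\delta|}$ piece is absorbed by the $\delta$-part of the normalizing constant $(2^{2+2s}h_1)^{|\delta|}\delta!^\sigma$ sitting in the denominator of the elements of $\Omega_3$; the inequality $h_3\ge 2^{2+2s}h_1$ then closes the estimate exactly as in the first part.

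\textbf{Main obstacle.} The technical challenge is the bookkeeping of the several powers of $2$ and $h_1$ through the Leibniz expansion so that precisely the thresholds $2^{1+s}$ and $2^{2+2s}$ emerge, while ensuring that the single indispensable use of $s+\sigma\ge 1$ (via $\mu!^{1-s-\sigma}\le 1$) is made at the correct moment. Finiteness of the residual $\mu$-series is a routine geometric-series consideration.
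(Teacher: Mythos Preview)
Your overall strategy---Leibniz expansion, seminorm bound on $f$, then standard multi-factorial inequalities---is exactly the paper's, and the bookkeeping that produces the powers $2^s$, $2$, $2^{1+s}$ in the $\alpha$-, $\beta$-, $\gamma$-slots is correct. There is, however, one genuine gap.

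The step where you ``harmlessly enlarge $h_1$'' to force convergence of the residual series $\sum_\mu (2^sh_1^2)^{-|\mu|}$ does not work. The parameter $h_1$ appears not only in the seminorm estimate on $\Omega_1$ but also in the normalizing constant $(2^{1+s}h_1)^{|\gamma|}$ that defines $\Omega_2$. If you replace $h_1$ by $h_1'>h_1$ in the seminorm bound while keeping $h_1$ in the normalization, the computation picks up an extra factor $(h_1'/h_1)^{|\gamma|}$, which is unbounded as $|\gamma|\to\infty$; if you also change the normalization you are no longer estimating the given set $\Omega_2$. The correct fix is to \emph{not} discard $\mu!^{1-s-\sigma}$ before summing: keep it and observe that
\[
\sum_{\mu\in\nn d}\mu!^{\,1-s-\sigma}\,(2^sh_1^2)^{-|\mu|}<\infty
\]
for every $h_1>0$ once $s+\sigma>1$, since factorial decay beats any exponential growth. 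This is the substantive use of $s+\sigma\ge 1$. The paper's arrangement achieves the same thing in a different packaging: it writes $\gamma_0!\le\gamma_0!^{s}\gamma_0!^{\sigma}$ and distributes these powers into the ratios $\bigl((\alpha+\gamma-\gamma_0)!\gamma_0!/(\alpha!\gamma!)\bigr)^s$ and $\bigl((\beta-\gamma_0)!\gamma_0!/\beta!\bigr)^\sigma$, each then bounded independently of $\gamma_0$; the stray $h_1^{-2|\gamma_0|}$ is absorbed by the slack $\gamma_0!^{\,s+\sigma-1}$ hidden in that first inequality.

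For $\Omega_3$ the paper proceeds slightly differently from your bootstrap: rather than feeding $\beta+\delta$ back into the $\Omega_2$ estimate, it proves a separate derivative analogue (the set $\{D^\delta f/((2^{1+s}h_1)^{|\delta|}\delta!^\sigma):f\in\Omega_1,\ \delta\in\nn d\}$ is bounded in $\maclS_{s;h_2}^\sigma$) and then composes the monomial step with the derivative step to reach the $h_3$ level. Your route leads to the same kind of estimate, so once the $\mu$-sum issue above is repaired the argument closes in essentially the same way as the paper's.
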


\par

\begin{proof}
Since $\Omega _1$ is a bounded set in $S_{s;h_1}^\sigma
(\rr d)$, there are constants $C>0$ such that
\begin{equation}\label{Eq:GSCond}
|x^\alpha D^\beta f(x)| \le Ch_1^{|\alpha +\beta |}
\alpha !^s\beta !^\sigma ,\quad \alpha ,\beta \in \nn d,
\end{equation}
for every $f\in \Omega _1$. We shall prove
that \eqref{Eq:GSCond} is true for all $f\in \Omega _2$
for a new choice of $C>0$, and $h_2$ in place of $h_1$.

\par

Let
$f\in \Omega _2$. Then
$$
f=\frac {x^\gamma f_0(x)}
{(2^{1+s}h_1)^{|\gamma |} \gamma !^s}
$$
for some $f_0\in \Omega _1$ and $\gamma \in \nn d$.
Then
\begin{multline*}
|x^\alpha D^\beta f(x)|
=
\left |
\frac {x^\alpha D^\beta (x^\gamma f_0)(x)}
{(2^{1+s}h_1)^{|\gamma |} \gamma !^s}
\right |
\\[1ex]
\le
\sum _{\gamma _0\le \gamma ,\beta}
{{\beta} \choose {\gamma _0}}
\frac {\gamma !}{(\gamma -\gamma _0)!}
\cdot
\frac {|x^{\alpha +\gamma -\gamma _0}
\partial ^{\beta -\gamma _0}f_0(x)|}
{(2^{1+s}h_1)^{|\gamma |}\gamma !^s}
\\[1ex]
\lesssim
\sum _{\gamma _0\le \gamma ,\beta}
{{\beta} \choose {\gamma _0}}
{{\gamma} \choose {\gamma _0}}\gamma _0!
\cdot
\frac {h_1^{|\alpha +\beta +\gamma -2\gamma _0|}
(\alpha +\gamma -\gamma _0)!^s(\beta -\gamma _0)!^\sigma}
{(2^{1+s}h_1)^{|\gamma |}\gamma !^s}
\\[1ex]
\lesssim
h_1^{|\alpha +\beta |}\alpha !^s\beta !^\sigma
\sum _{\gamma _0\le \gamma ,\beta}
{{\beta} \choose {\gamma _0}}
{{\gamma} \choose {\gamma _0}}
2^{-(1+s)|\gamma |}
\left (
\frac {(\alpha +\gamma -\gamma _0)!\gamma _0!}
{\alpha !\gamma !}
\right )^s
\left (
\frac {(\beta -\gamma _0)!\gamma _0!}
{\beta !}
\right )^\sigma
\\[1ex]
\lesssim
h_1^{|\alpha +\beta |}\alpha !^s\beta !^\sigma
\sum _{\gamma _0\le \gamma ,\beta}
{{\beta} \choose {\gamma _0}}
{{\gamma} \choose {\gamma _0}}
2^{-(1+s)|\gamma |}
\left (
\frac {(\alpha +\gamma -\gamma _0)!\gamma _0!}
{(\alpha +\gamma )!}
\right )^s
{{\alpha +\gamma } \choose {\gamma}}^s
\\[1ex]
\lesssim
h_1^{|\alpha +\beta |}\alpha !^s\beta !^\sigma
\sum _{\gamma _0\le \gamma ,\beta}
2^{|\beta |}2^{|\gamma|}2^{-(1+s)|\gamma |}\cdot 1\cdot
2^{s|\alpha +\gamma |}
\\[1ex]
\lesssim
2^{s|\alpha |}2^{|\beta |}h_1^{|\alpha +\beta |}
\alpha !^s\beta !^\sigma \sum _{\gamma _0\le \beta} 1.
\end{multline*}
Since
$$
\sum _{\gamma _0\le \beta} 1 \lesssim 2^{s|\beta|},
$$
we get
$$
|x^\alpha D^\beta f(x)|
\le
C2^{s|\alpha |}2^{(1+s)|\beta |}h_1^{|\alpha +\beta |}
\alpha !^s\beta !^\sigma
\le
Ch_2^{|\alpha +\beta|} \alpha !^s\beta !^\sigma
$$
for some constant $C$ which is independent of $f$, and the
assertion on $\Omega _2$ follows.

\par

The same type of arguments shows that
\begin{equation}\label{Eq:SetBounded}
\Sets {x\mapsto \frac {D^\delta f(x)}
{(2^{1+s}h_1)^{|\delta |}\delta !^\sigma}}
{f\in \Omega _1,\ \delta \in \nn d}
\end{equation}
is a bounded set in $\maclS_{s;h_2}^\sigma (\rr d)$,
and the boundedness of $\Omega _3$ in $\maclS_{s;h_3}^\sigma
(\rr d)$ follows by combining the
boundedness of $\Omega _2$ and the
boundedness of \eqref{Eq:SetBounded}
in $\maclS_{s;h_2}^\sigma (\rr d)$.
\end{proof}

\par

\begin{lemma}\label{Lemma:mstEst}
Let $s,\tau >0$, and set
$$
m_s (t)=
\sum_{j=0}^{\infty}\frac{t^j}{(j!)^{2s}}
\quad \text{and}\quad
m_{s , \tau}(x)= m_s (\tau \px^2)
\quad t \geq 0,\ x \in \rr d
$$
Then
\begin{equation} \label{paola}
C^{-1}e^{(2s -\ep) \tau^{\frac{1}{2s }}
\px ^{\frac{1}{s}}}
\leq
m_{s , \tau}(x)
\leq
C e^{(2s +\ep)  \tau^{\frac{1}{2s}}\px^{\frac{1}{s}}},
\end{equation}
for every $\ep>0$, and
\begin{equation}\label{Eq:mstEst}
\frac {x^\alpha}{m_{s,\tau}(x)} \lesssim h_0^{|\alpha |}
\alpha !^s e^{-r|x|^{\frac 1s}},
\end{equation}
for some positive constant $r$ which depends on
$d$, $s$ and $\tau$ only.
\end{lemma}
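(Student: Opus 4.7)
The plan is in two stages. First, I would establish the two-sided exponential bound \eqref{paola} by a Stirling/Laplace analysis of the series $m_s$; second, I would deduce \eqref{Eq:mstEst} from the lower half of \eqref{paola} by an elementary optimization.

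For \eqref{paola}, applying the Stirling inequalities $c\sqrt{j}(j/e)^j \leq j! \leq C\sqrt{j}(j/e)^j$ to the summand of $m_s(t)$ shows that $t^j/(j!)^{2s}$ is, up to a polynomial factor, comparable to $g(j) = (t^{1/(2s)} e/j)^{2sj}$, which is maximized at the saddle $j_* = t^{1/(2s)}$ with value $e^{2s\, t^{1/(2s)}}$. A second-order expansion of $\log g$ around $j_*$ yields the Gaussian-type profile $\log g(j_*+y) \approx 2s\, t^{1/(2s)} - s y^2/t^{1/(2s)}$, from which $\sum_j g(j) \asymp t^{1/(4s)}\, e^{2s t^{1/(2s)}}$. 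The polynomial correction is absorbed into $e^{\varepsilon\, t^{1/(2s)}}$, and substituting $t = \tau \langle x\rangle^2$ (so that $t^{1/(2s)} = \tau^{1/(2s)} \langle x\rangle^{1/s}$) gives both directions of \eqref{paola}. The lower half may alternatively be obtained by retaining the single summand with $j = \lfloor t^{1/(2s)} \rfloor$ and invoking Stirling.

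For \eqref{Eq:mstEst}, fix some $\varepsilon \in (0,2s)$ and set $c = (2s-\varepsilon)\tau^{1/(2s)} > 0$. Using $\langle x\rangle^{1/s} \geq |x|^{1/s}$ and splitting $c = c_1 + r$ with $c_1, r > 0$, the lower bound in \eqref{paola} gives
$$
\frac{|x^\alpha|}{m_{s,\tau}(x)} \leq C\, |x|^{|\alpha|}\, e^{-c_1 |x|^{1/s}}\, e^{-r |x|^{1/s}}.
$$
Writing $v = |x|^{1/s} \geq 0$, one has $\sup_{v \geq 0} v^{s|\alpha|} e^{-c_1 v} = (s|\alpha|/(c_1 e))^{s|\alpha|}$, and Stirling's bound $|\alpha|^{|\alpha|} \leq e^{|\alpha|} |\alpha|!$ combined with the multinomial estimate $|\alpha|! \leq d^{|\alpha|} \alpha!$ converts this to $(sd/c_1)^{s|\alpha|}\, \alpha!^s$. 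Setting $h_0 = (sd/c_1)^s$ produces \eqref{Eq:mstEst}, with $r$ depending only on $d$, $s$, $\tau$.

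The main obstacle is the sharpness of \eqref{paola} for arbitrarily small $\varepsilon$: the saddle $j_* = t^{1/(2s)}$ is not an integer, and the Stirling corrections differ from above and below, so one must carefully track both sides to confirm that the polynomial factors contribute only something of the form $e^{\varepsilon\, t^{1/(2s)}}$. Once \eqref{paola} is in hand, the derivation of \eqref{Eq:mstEst} is a short, elementary calculation.
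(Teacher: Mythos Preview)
Your proposal is correct. For \eqref{paola}, the paper simply cites \cite{IV} and does not give its own argument, whereas you supply a direct Stirling/saddle-point sketch; your outline is accurate (the dominant index is $j_*\approx t^{1/(2s)}$, the peak value is $e^{2s\,t^{1/(2s)}}$, and the Gaussian width contributes only a polynomial factor absorbed into $e^{\varepsilon\, t^{1/(2s)}}$). For \eqref{Eq:mstEst}, both you and the paper derive it from the lower bound in \eqref{paola} by the same optimization of $u\mapsto u^{k}e^{-cu^{1/s}}$; the only organizational difference is that the paper first factors coordinate-wise, writing $\frac{x^\alpha}{m_{s,\tau}(x)}\lesssim \prod_{j=1}^d |x_j|^{\alpha_j}e^{-2r_0|x_j|^{1/s}}$ and then optimizing each one-dimensional factor (which gives $\alpha_j!^s$ directly), while you optimize the radial quantity $|x|^{|\alpha|}e^{-c_1|x|^{1/s}}$ and then invoke the multinomial bound $|\alpha|!\le d^{|\alpha|}\alpha!$ to pass from $|\alpha|!^s$ to $\alpha!^s$. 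Both routes yield constants depending only on $d,s,\tau$; neither has any real advantage over the other.
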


\par

The estimate \eqref{paola} follows from \cite{IV}, and
\eqref{Eq:mstEst} also follows from computations given in
e.{\,}g. \cite{IV,CaTo}. For sake of completeness we present
a proof of \eqref{Eq:mstEst}.

\par

\begin{proof}
We have
$$
\frac {x^\alpha}{m_{s,\tau}(x)}
\lesssim \prod _{j=1}^d
g_{\alpha _j}(x_j),
$$
where
$$
g_k(t) = t^ke^{-2r_0t^{\frac 1s}},\qquad t\ge 0,
$$
for some $r_0>0$ depending only on $d$, $s$ and $\tau$. Let
$$
g_{0,k}(t) = C_k e^{-r_0t},\qquad t\ge 0,
$$
where
$$
C_k = \sup _{t\ge 0}(t^{sk}e^{-r_0t}).
$$
Then $g_k(t) \le g_{0,k}(t^{\frac 1s})$, and the result follows
if we prove $C_k \lesssim h_0^kk!^s$.

\par

By straight-forward computations, it follows that the maximum
of $t^{sk}e^{-r_0t}$ is attained at $t=sk/r_0$, giving that
$$
C_k = \left ( \frac {s}{r_0e} \right )^{sk}(k^k)^s
\lesssim
h_0^kk!^s,\qquad h_0=\left ( \frac s{r_0}\right )^s,
$$
where the last inequality follows from Stirling's formula.
This gives the result.
\end{proof}

\par

\begin{proof}[Proof of Theorem \ref{Thm:theorem2}]
By Theorem \ref{Thm:CalculiTransf} it suffices to consider the case $A=0$,
that is for the operator 
$$
\op _0(a)f(x)= (2\pi )^{-\frac d2} \int_{\rr d} 
a(x,\xi) \widehat{f}(\xi)e^{i\langle x,\xi \rangle} \, d\xi.
$$

\par

Observe that
$$
\frac{1}{m_{s , \tau}(x)} \sum_{j=0}^{\infty} \frac{\tau ^j}
{(j!)^{2s }}(1-\Delta_{\xi})^j
e^{i\langle x,\xi \rangle} = e^{i\langle x,\xi \rangle}.
$$
Let now $h_1>0$ and $f \in \Omega,$ where $\Omega$ is a bounded subset of $\maclS_{s,h_1}^\sigma (\rr d)$.
For fixed $\alpha, \beta \in \nn d$ we get
\begin{multline}\label{Eq:Op(a)Comp}
(2\pi )^{\frac d2} x^{\alpha}D_x^{\beta} (\textrm{Op}_0(a)f)(x)
\\[1ex]
= x^{\alpha} \sum
_{\gamma \leq \beta} {\beta \choose {\gamma} } \int_{\rr d}
 \xi^{\gamma} D_x^{\beta-\gamma}a(x,\xi)
 \widehat{f}(\xi ) e^{i \langle x,\xi \rangle} \,  d\xi 
\\[1ex]
= \frac{x^{\alpha}}{m_{s ,\tau}(x)} \sum_{\gamma \leq \beta}
\binom{\beta}{\gamma} 
g_{\tau ,\beta ,\gamma}(x),
\end{multline}
$$
g_{\tau ,\beta ,\gamma}(x)
=
\sum_{j=0}^{\infty} \frac{\tau ^j}{(j!)^{2s}}
\int_{\rr d}  (1-\Delta_{\xi} )^j
\left (  \xi^{\gamma} D_x^{\beta-\gamma}a(x,\xi)
\widehat{f}(\xi ) \right )  e^{i \langle x,\xi \rangle}
\, d\xi.
$$

\par

By Lemma \ref{Lemma:theorem2} and the fact that $(2j)!\le 2^jj!^2$,
it follows that for some
$h>0$,
$$
\Omega
=
\Sets {\xi \mapsto \frac {(1-\Delta_{\xi} )^j
(  \xi^{\gamma} D_x^{\beta}a(x,\xi)
\widehat{f}(\xi ))}{h^{|\beta +\gamma |+j}j!^{2s}\gamma !^\sigma
\beta !^\sigma e^{r|x|^{\frac 1s}}}}
{j\ge 0,\ \beta ,\gamma \in \nn d }
$$
is bounded in $\maclS _\sigma ^s(\rr d)$ for every $r>0$.
This implies that for some positive constants $h$ and $r_0$
we get
$$
|(1-\Delta_{\xi} )^j (\xi^{\gamma} D_x^{\beta}a(x,\xi)
\widehat{f}(\xi ))|
\lesssim
h^{|\beta +\gamma |+j}j!^{2s}\gamma !^\sigma
\beta !^\sigma e^{r|x|^{\frac 1s}-r_0|\xi |^{\frac 1\sigma}},
$$
for every $r>0$. Hence,
\begin{multline*}
|g_{\tau ,\beta ,\gamma}(x)|
\lesssim
\sum_{j=0}^{\infty}
\frac{\tau ^j}{(j!)^{2s}}
h^{|\beta |+j}j!^{2s}\gamma !^\sigma
(\beta -\gamma)!^\sigma e^{r|x|^{\frac 1s}}
\int_{\rr d} e^{-r_0|\xi |^{\frac 1\sigma}}\, d\xi
\\[1ex]
\lesssim
h^{|\beta |}\beta !^\sigma e^{r|x|^{\frac 1s}}
\sum_{j=0}^{\infty} (\tau h)^j
\asymp
h^{|\beta |}\beta !^\sigma e^{r|x|^{\frac 1s}}
\end{multline*}
for every $r>0$, provided $\tau$ is chosen such that
$\tau h<1$.

\par

By inserting this into \eqref{Eq:Op(a)Comp} and using
Lemma \ref{Lemma:mstEst} we get for some $h>0$ and some $r_0>0$
that
\begin{multline*}
|x^{\alpha}D_x^{\beta} (\textrm{Op}_0(a)f)(x)|
\lesssim
h^{|\alpha |}
\alpha !^s e^{-r_0|x|^{\frac 1s}}
\sum_{\gamma \leq \beta}
\binom{\beta}{\gamma}
|g_{\tau ,\beta ,\gamma}(x)|
\\[1ex]
\lesssim
h^{|\alpha +\beta |}
\alpha !^s\beta !^\sigma e^{-(r_0-r)|x|^{\frac 1s}}
\left (\sum_{\gamma \leq \beta} 1\right )
\lesssim
(2h)^{|\alpha +\beta |}
\alpha !^s\beta !^\sigma ,
\end{multline*}
provided that $r$ above is chosen to be smaller than $r_0$.
Then the continuity of $\op _A(a)$ on $\maclS _{s}^\sigma (\rr d)$
follows. The continuity of $\op _A(a)$ on
$(\maclS _{s}^\sigma )'(\rr d)$ now follows from the preceding
continuity and duality.
\end{proof}

\par

Next we shall discuss corresponding continuity in the
Beurling case. The main idea is to deduce such properties
by suitable estimates on short-time Fourier transforms
of involved functions and distributions.
First we
have the following relation between the
short-time Fourier transforms of the
symbols and kernels of a pseudo-differential operator.

\par

\begin{lemma}\label{Lemma:STFTSymbolKernel}
Let $A \in \MdR$, $s,\sigma >0$ be such that $s+\sigma \ge 1$,
$a\in (\maclS _{s,\sigma}^{\sigma ,s})'(\rr {2d})$ 
($a\in (\Sigma _{s,\sigma}^{\sigma ,s})'(\rr {2d})$),
$\phi \in \maclS _{s,\sigma}^{\sigma ,s}(\rr {2d})$
($\phi \in \Sigma _{s,\sigma}^{\sigma ,s}(\rr {2d})$),
and let
\begin{align*}
K_{A,a}(x,y) &= (2\pi )^{-\frac d2}(\mascF _2^{-1}a)(x-A(x-y),x-y)
\intertext{and}
\psi (x,y) &= (2\pi )^{-\frac d2}(\mascF _2^{-1}\phi )(x-A(x-y),x-y)
\end{align*}
be the kernels of $\op _A(a)$ and $\op _A(\phi )$,
respectively. Then
\begin{multline}\label{Eq:STFTSymbolKernel}
(V_\psi K_{a,A})(x,y,\xi ,\eta) 
\\[1ex]
=
(2\pi )^{-d}e^{i\scal {x-y}{\eta -A^*(\xi +\eta )}}
(V_\phi a)(x-A(x-y),-\eta +A^*(\xi +\eta ),\xi +\eta ,y-x).
\end{multline}
\end{lemma}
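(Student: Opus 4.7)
The plan is to compute $V_{\psi} K_{a,A}$ directly from its definition, using an affine change of variables to convert the STFT of the kernel into a (partial) STFT of the symbol, and then applying Parseval's formula in the second variable to replace $\mascF_2^{-1}\phi$ and $\mascF_2^{-1}a$ by $\phi$ and $a$.

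First, writing the STFT for functions on $\rr{2d}$ as
$$
(V_{\psi} K_{a,A})(x,y,\xi ,\eta )
= (2\pi )^{-d}\iint _{\rr {2d}} K_{a,A}(u,v)\,\overline{\psi (u-x,v-y)}\,
e^{-i(\scal u\xi +\scal v\eta )}\,du\,dv ,
$$
the main move is the linear change of variables $(u,v)\mapsto (u',v')$ with $u'=u-A(u-v)$, $v'=u-v$, whose inverse is $u=u'+Av'$, $v=u'+(A-I)v'$ and whose Jacobian has absolute value one. Under this substitution, the two kernels become pure translates of partial inverse Fourier transforms:
$$
K_{a,A}(u,v)=(2\pi )^{-\frac d2}(\mascF _2^{-1}a)(u',v'),
\qquad
\psi (u-x,v-y)=(2\pi )^{-\frac d2}(\mascF _2^{-1}\phi )(u'-\tilde x,\,v'-(x-y)),
$$
where $\tilde x=x-A(x-y)$. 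A short calculation using $u=u'+Av'$, $v=u'+(A-I)v'$ also gives
$$
\scal u\xi +\scal v\eta =\scal {u'}{\xi +\eta}+\scal {v'}{A^*(\xi +\eta )-\eta} .
$$

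Substituting back, the identity to prove reduces to showing
$$
(V_{\psi} K_{a,A})(x,y,\xi,\eta)
= (2\pi )^{-d}(V_{\mascF _2^{-1}\phi}\mascF _2^{-1}a)\bigl(\tilde x,\,x-y,\,\xi +\eta ,\,A^*(\xi +\eta )-\eta \bigr),
$$
which follows from the definition of the STFT on $\rr {2d}$ together with the bookkeeping from Step 2 (the dimensional factor $(2\pi)^{-d}$ appears because $(2\pi)^{-d/2}\cdot (2\pi)^{-d/2}$ combines with $(2\pi)^{-d}$ from the outer STFT and $(2\pi)^{d}$ from identifying the remaining double integral as another STFT). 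Finally, one relates $V_{\mascF _2^{-1}\phi}\mascF _2^{-1}a$ to $V_\phi a$: applying Parseval in the second variable $v'$ (which swaps $\mascF_2^{-1}a$, $\mascF_2^{-1}\phi$ for $a$, $\phi$ and flips the role of position/frequency in that slot), one obtains
$$
V_{\mascF _2^{-1}\phi}\mascF _2^{-1}a\,(X_1,X_2,\Xi _1,\Xi _2)
= e^{-i\scal {X_2}{\Xi _2}}\,V_{\phi}a\,(X_1,\Xi _2,\Xi _1,-X_2).
$$
Specializing to $X_1=\tilde x$, $X_2=x-y$, $\Xi _1=\xi +\eta$, $\Xi _2=A^*(\xi +\eta )-\eta$ produces the phase $e^{-i\scal {x-y}{A^*(\xi +\eta )-\eta}}=e^{i\scal {x-y}{\eta -A^*(\xi +\eta )}}$ and yields precisely \eqref{Eq:STFTSymbolKernel}.

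The hypotheses $a\in(\maclS _{s,\sigma}^{\sigma ,s})'$ and $\phi \in\maclS _{s,\sigma}^{\sigma ,s}$ (or the $\Sigma$-analogues) guarantee that all the formal manipulations above—applying the partial inverse Fourier transforms, the change of variables, and Parseval in one slot—are justified by duality between the Gelfand--Shilov space and its dual. The main obstacle is purely a bookkeeping one: correctly tracking the four normalization factors $(2\pi)^{-d/2}$ and, more delicately, the two separate phase contributions (the $A^{\ast}$-twist coming from the affine substitution in the $v$-exponent and the phase $e^{-i\scal {X_{2}}{\Xi _{2}}}$ arising from the Parseval step). No deeper analytic difficulty is involved.
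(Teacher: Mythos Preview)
Your argument is correct and follows essentially the same route as the paper: both compute the STFT of the kernel by substituting the definition $K_{a,A}=(2\pi)^{-d/2}(\mascF_2^{-1}a)(T_A(\cdot,\cdot),\cdot-\cdot)$, performing the affine change of variables with unit Jacobian, and using Parseval/Fourier inversion to pass from $\mascF_2^{-1}a,\mascF_2^{-1}\phi$ back to $a,\phi$. The only difference is organizational---you split the computation into two clean steps (first recognize $V_{\mascF_2^{-1}\phi}\mascF_2^{-1}a$, then apply the partial-Fourier STFT identity), whereas the paper collapses both steps into a single formal calculation with the phase $Q$; the content is the same.
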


\par

The essential parts of \eqref{Eq:STFTSymbolKernel}
is presented in the proof of \cite[Proposition 2.5]{To16}.
In order to be self-contained we here present a short proof.

\par

\begin{proof}
Let
$$
T_A(x,y)=x-A(x-y)
$$
and
$$
Q=Q(x,x_1,y,\xi ,\xi _1,\eta )
=
\scal {x-y}{\xi _1-T_{A^*}(-\eta ,\xi )}
-\scal {x_1}{\xi +\eta}.
$$
By formal computations, using Fourier's
inversion formula we get
\begin{multline*}
(V_\psi K_{a,A})(x,y,\xi ,\eta)
\\[1ex]
=
(2\pi )^{-3d}\iint 
K_{a,A}(x_1,y_1) \overline{\psi (x_1-x,y_1-y)}
e^{-i(\scal {x_1}\xi +\scal {y_1}\eta)}\, dx_1dy_1
\\[1ex]
=
(2\pi )^{-2d}\iint a(x_1,\xi _1)
\overline{\phi (x_1-T_A(x,y),\xi _1-T_{A^*}(-\eta ,\xi ))}
e^{iQ(x,x_1,y,\xi ,\xi _1,\eta )}\, dx_1d\xi _1
\\[1ex]
=
(2\pi )^{-d}e^{i\scal {x-y}{\eta -A^*(\xi +\eta )}}
(V_\phi a)(T_A(x,y),T_{A^*}(-\eta ,\xi ),\xi +\eta ,y-x),
\end{multline*}
where all integrals should be interpreted as suitable
Fourier transforms. This gives the result.
\end{proof}

\par

Before continuing discussing continuity of pseudo-differential
operators, we observe that the previous lemma in combination
with Propositions \ref{prop2} and \ref{prop2'} give the following.

\par

\begin{prop}\label{Prop:prop2B}
Let $A \in \MdR$, $s,\sigma >0$ be such that $s+\sigma \ge 1$ and
$(s,\sigma )\neq (\frac 12,\frac 12)$,
$\phi \in \Sigma _{s}^{\sigma}
(\rr {2d})\setminus 0$,
$a$ be a Gelfand-Shilov distribution on $\rr {2d}$
and let $K_{a,A}$ be the kernel of $\op _A(a)$. 
Then the following conditions are equivalent:
\begin{enumerate}
\item $a\in \Gamma _{s,\sigma}^{\sigma ,s}
(\rr {2d})$ (resp. $a\in \Gamma _{s,\sigma}^{\sigma ,s;0}
(\rr {2d})$);

\vrum

\item for some $r>0$,
$$
|V_\phi K_{a,A}(x,y,\xi ,\eta )|
\lesssim
e^{r (|x-A(x-y)|^{\frac 1{s}} +|\eta -A^*(\xi +\eta )|^{\frac 1{\sigma}})
-h (|\xi +\eta |^{\frac 1{\sigma }}
+
|x-y|^{\frac 1{s}})}
$$
holds for some $h>0$ (for every $h>0$).
\end{enumerate}
\end{prop}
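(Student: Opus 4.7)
The plan is to combine Lemma \ref{Lemma:STFTSymbolKernel}, which identifies $V_\psi K_{a,A}$ (up to a unimodular factor and an affine change of variables) with $V_\phi a$, with the STFT characterizations of the symbol classes $\Gamma _{s,\sigma }^{\sigma ,s}$ and $\Gamma _{s,\sigma }^{\sigma ,s;0}$ supplied by Proposition \ref{prop2}. The bulk of the argument is then just transport of the exponential estimates through a linear bijection on the phase space.

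First I would, up to replacing the given $\phi$ by a window in $\Sigma _{s,\sigma }^{\sigma ,s}(\rr {2d})\setminus 0$ (which is legitimate because the STFT characterizations of Gelfand--Shilov type spaces are independent of the choice of non-zero window within the admissible class, at the cost of possibly changing $h$ and $r$ by fixed multiplicative constants), set $\psi (x,y)=(2\pi )^{-d/2}(\mascF _2^{-1}\phi )(x-A(x-y),x-y)$. By Theorem \ref{Thm:CalculiTransf} (2), $\psi \in \Sigma _{s,\sigma }^{\sigma ,s}(\rr {2d})\setminus 0$ as well, so it is a legitimate window for the STFT on $\rr {2d}$. Lemma \ref{Lemma:STFTSymbolKernel} then yields
\begin{equation*}
|V_\psi K_{a,A}(x,y,\xi ,\eta )|
=
(2\pi )^{-d}|V_\phi a(x-A(x-y),-\eta +A^*(\xi +\eta ),\xi +\eta ,y-x)|.
\end{equation*}

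Next I would introduce the linear bijection $L:(x,y,\xi ,\eta )\mapsto (X_1,X_2,\Xi _1,\Xi _2)$, where $X_1=x-A(x-y)$, $X_2=-\eta +A^*(\xi +\eta )$, $\Xi _1=\xi +\eta$ and $\Xi _2=y-x$; its inverse is explicit and also linear. Under this substitution the right-hand side of the estimate in condition (2) above turns into
\begin{equation*}
|V_\phi a(X_1,X_2,\Xi _1,\Xi _2)|
\lesssim
e^{r(|X_1|^{\frac 1s}+|X_2|^{\frac 1\sigma })
-h(|\Xi _1|^{\frac 1\sigma }+|\Xi _2|^{\frac 1s})},
\end{equation*}
which is exactly the STFT condition \eqref{GelfRelCond1A}$'$ of Proposition \ref{prop2} with the parameter choice $(s_1,s_2,\sigma _1,\sigma _2)=(s,\sigma ,\sigma ,s)$.

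From here the proof is immediate. Applying part (1) of Proposition \ref{prop2} gives that the existence of some $r,h>0$ making the STFT estimate on $K_{a,A}$ hold is equivalent to $a\in \Gamma _{s,\sigma }^{\sigma ,s}(\rr {2d})$, and applying part (2) gives that the existence of some $r>0$ with the STFT estimate valid for every $h>0$ is equivalent to $a\in \Gamma _{s,\sigma }^{\sigma ,s;0}(\rr {2d})$. Since the substitution $L$ is a linear bijection and has no effect on the form of the exponentials, no loss of constants beyond fixed multiplicative factors occurs, and both implications can be reversed without difficulty.

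The only delicate point, and the one I would treat most carefully, is verifying that changing the window from the given $\phi \in \Sigma _s^\sigma (\rr {2d})\setminus 0$ to one in $\Sigma _{s,\sigma }^{\sigma ,s}(\rr {2d})\setminus 0$ (so that Lemma \ref{Lemma:STFTSymbolKernel} and Proposition \ref{prop2} apply in their stated form) leaves the family of estimates in (2) invariant. This follows from the standard convolution identity for STFTs $V_{\phi _1}f\lesssim |V_{\phi _2}f|\ast |V_{\phi _2}\phi _1|$ together with the rapid decay of $V_{\phi _2}\phi _1$, by an argument completely analogous to the one used in the proof of Proposition \ref{SymbClassModSpace}; these details are essentially the same as in Proposition \ref{prop1} and I would simply refer to them rather than repeat the computation.
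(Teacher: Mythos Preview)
Your approach is exactly the one the paper intends: combine Lemma \ref{Lemma:STFTSymbolKernel} with Proposition \ref{prop2} and transport the estimates through the affine bijection. Indeed, the paper gives no explicit proof and simply states that the result follows from this combination.

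One correction and one simplification are worth noting. First, your claim that $\psi \in \Sigma _{s,\sigma }^{\sigma ,s}(\rr {2d})$ by Theorem \ref{Thm:CalculiTransf} (2) is wrong on both counts: the map $\phi \mapsto \psi$ is the symbol-to-kernel map, not $e^{i\scal {AD_\xi }{D_x}}$, and it sends $\Sigma _{s,\sigma }^{\sigma ,s}(\rr {2d})$ homeomorphically to $\Sigma _s^\sigma (\rr {2d})$ (as recalled in the paper just before Proposition \ref{Prop:GelShilAlgebras}), so in fact $\psi \in \Sigma _s^\sigma (\rr {2d})$. This does not damage your argument, since $\Sigma _s^\sigma (\rr {2d})$ is precisely the right class of windows for the kernel. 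Second, the window-change step is unnecessary: given the original $\phi \in \Sigma _s^\sigma (\rr {2d})\setminus 0$, simply let $\phi '\in \Sigma _{s,\sigma }^{\sigma ,s}(\rr {2d})\setminus 0$ be the unique symbol with $K_{\phi ',A}=\phi$, and apply Lemma \ref{Lemma:STFTSymbolKernel} directly with the pair $(\phi ',\phi )$. Then Proposition \ref{prop2} applies to $V_{\phi '}a$ with no window substitution needed.
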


\par

By similar arguments we also get the following.
The details are left for the reader.

\par

\begin{prop}\label{Prop:prop2'B}
Let $A \in \MdR$, $s,\sigma >0$ be such that $s+\sigma \ge 1$,
$\phi \in \maclS _{s}^{\sigma}
(\rr {2d})\setminus 0$,
$a$ be a Gelfand-Shilov distribution on $\rr {2d}$
and let $K_{a,A}$ be the kernel of $\op _A(a)$. 
Then the following conditions are equivalent:
\begin{enumerate}
\item $a\in \Gamma _{s,\sigma ;0}^{\sigma ,s}
(\rr {2d})$ (resp. $a\in \Gamma _{s,\sigma ;0}^{\sigma ,s;0}
(\rr {2d})$);

\vrum

\item for some $h>0$ (for every $h>0$),
$$
|V_\phi K_{a,A}(x,y,\xi ,\eta )|
\lesssim
e^{r (|x-A(x-y)|^{\frac 1{s}} +|\eta -A^*(\xi +\eta )|^{\frac 1{\sigma}})
-h (|\xi +\eta |^{\frac 1{\sigma }}
+
|x-y|^{\frac 1{s}})}
$$
holds for every $r>0$.
\end{enumerate}


\end{prop}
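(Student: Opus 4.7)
The proof is a direct adaptation of the proof of Proposition \ref{Prop:prop2B}, with Proposition \ref{prop2'} (and its natural strengthening quantified over every $h>0$) taking the place of Proposition \ref{prop2}. The bridge between symbol and kernel estimates is Lemma \ref{Lemma:STFTSymbolKernel}.

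First I would check that $\psi$, the kernel of $\op_A(\phi)$, lies in a Gelfand-Shilov class $\maclS_{s_1,s_2}^{\sigma_1,\sigma_2}(\rr{2d})$ adequate to serve as a window in Proposition \ref{prop2'}. This is standard: apply Proposition \ref{propBroadGSSpaceChar} to $\mascF_2^{-1}\phi$ and use invariance under the linear change of variables $(x,y)\mapsto (x-A(x-y),x-y)$. Taking moduli in the identity \eqref{Eq:STFTSymbolKernel} kills the unimodular phase and yields
\begin{equation*}
|V_\psi K_{a,A}(x,y,\xi,\eta)| = (2\pi)^{-d}|V_\phi a(x-A(x-y),\, -\eta+A^*(\xi+\eta),\, \xi+\eta,\, y-x)|.
\end{equation*}

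Next, I would invoke Proposition \ref{prop2'} with $(s_1,s_2,\sigma_1,\sigma_2)=(s,\sigma,\sigma,s)$ and $d_1=d_2=d$: membership $a\in\Gamma_{s,\sigma;0}^{\sigma,s}(\rr{2d})$ is equivalent to the existence of $h>0$ such that for every $r>0$,
\begin{equation*}
|V_\phi a(x_1,x_2,\xi_1,\xi_2)|\lesssim e^{r(|x_1|^{1/s}+|x_2|^{1/\sigma}) - h(|\xi_1|^{1/\sigma}+|\xi_2|^{1/s})}.
\end{equation*}
Since the map $(x,y,\xi,\eta)\mapsto (x-A(x-y),\, -\eta+A^*(\xi+\eta),\, \xi+\eta,\, y-x)$ is a linear bijection on $\rr{4d}$, inserting this substitution into the displayed estimate transforms it, using $|-\eta+A^*(\xi+\eta)|=|\eta-A^*(\xi+\eta)|$ and $|y-x|=|x-y|$, precisely into the estimate in condition (2) with ``for some $h>0$''. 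The converse direction uses the inverse of the same bijection. The parenthetical case $a\in\Gamma_{s,\sigma;0}^{\sigma,s;0}(\rr{2d})$ is handled identically using the ``for every $h>0$'' analog of Proposition \ref{prop2'}, whose derivation is a straightforward adaptation (compare with Proposition \ref{prop2}(3)).

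The only genuinely delicate point is the admissibility of $\psi$ as a window in the relevant anisotropic Gelfand-Shilov space; once this is settled, everything else reduces to a linear bijective change of variables coupled with the equivalences already in hand.
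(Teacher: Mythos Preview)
Your argument for the non-parenthetical case is correct and is precisely the route the paper intends: the paper merely says the result follows ``by similar arguments'' as Proposition \ref{Prop:prop2B}, meaning one combines Lemma \ref{Lemma:STFTSymbolKernel} with Proposition \ref{prop2'} and the linear change of variables $(x,y,\xi,\eta)\mapsto(x-A(x-y),-\eta+A^*(\xi+\eta),\xi+\eta,y-x)$. Your remark about checking that the transformed window lands in the correct anisotropic class $\maclS_{s,\sigma}^{\sigma,s}(\rr{2d})$ is exactly the point that needs verifying, and it goes through as you describe.

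The parenthetical case, however, is not the ``straightforward adaptation'' you claim. Proposition \ref{prop2}(3), which you invoke for comparison, is stated for a window in $\Sigma_{s_1,s_2}^{\sigma_1,\sigma_2}$, whereas here only $\phi\in\maclS_s^\sigma(\rr{2d})$ is assumed. With a window merely in the $\maclS$-class, the ``for every $h>0$'' characterization breaks down: take $a\equiv 1$, which certainly lies in $\Gamma_{s,\sigma;0}^{\sigma,s;0}(\rr{2d})$, and let $\tilde\phi\in\maclS_{s,\sigma}^{\sigma,s}(\rr{2d})\setminus\Sigma_{s,\sigma}^{\sigma,s}(\rr{2d})$ be the symbol-side window. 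Then $|V_{\tilde\phi}a(x,\xi,\eta,y)|\asymp|\widehat{\tilde\phi}(\eta,y)|$, and this decays like $e^{-c(|\eta|^{1/\sigma}+|y|^{1/s})}$ only for \emph{some} fixed $c>0$ determined by $\tilde\phi$, not for every $c>0$. So condition (2) in its ``for every $h>0$'' form fails even though condition (1) holds. The parenthetical equivalence therefore seems to require the stronger hypothesis $\phi\in\Sigma_s^\sigma(\rr{2d})$, under which it does follow from Proposition \ref{prop2}(3) by the same change-of-variables argument; the paper itself gives no details here, so this gap is as much an issue in the statement as in your proof.
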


\par

\begin{thm}\label{Thm:theorem1}
Let $A \in \MdR$, $s,\sigma >0$ be such that $s+\sigma \ge 1$
and $(s,\sigma )\neq (\frac 12,\frac 12)$, and let
$a\in \Gamma_{s,\sigma}^{\sigma ,s;0} (\rr {2d})$. Then $\op _A(a)$
is continuous on
$\Sigma _s^\sigma (\rr d)$, and is uniquely extendable
to a continuous map on $(\Sigma _s^\sigma )'(\rr d)$.
\end{thm}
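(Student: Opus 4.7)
The plan is to adapt the oscillatory-integral scheme from the proof of Theorem \ref{Thm:theorem2} to the Beurling setting, and then pass to the distribution case by duality. As a preliminary reduction, by Theorem \ref{Thm:CalculiTransf}(4) and the equivalence \eqref{EqCalculiTransfer}, I would replace $\op _A(a)$ by $\op _0(b)$ with $b=e^{i\scal {AD_\xi}{D_x}}a$ still in $\Gamma _{s,\sigma}^{\sigma ,s;0}(\rr {2d})$. It therefore suffices to establish continuity of $\op _0(a)$ on $\Sigma _s^\sigma(\rr d)$ when $a\in \Gamma _{s,\sigma}^{\sigma ,s;0}(\rr {2d})$.

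Next, mimicking the proof of Theorem \ref{Thm:theorem2}, I would insert the $m_{s,\tau}$-weight identity into the Fourier integral defining $\op _0(a)f$ and integrate by parts in $\xi$ to obtain, as in \eqref{Eq:Op(a)Comp},
$$
(2\pi )^{d/2}x^\alpha D_x^\beta \op _0(a)f(x)=\frac{x^\alpha}{m_{s,\tau}(x)}\sum _{\gamma \le \beta}{\beta \choose \gamma}g_{\tau ,\beta ,\gamma}(x),
$$
with $g_{\tau,\beta,\gamma}$ as in that proof. The decisive input change from the Roumieu argument is that in the Beurling case the derivative bounds on $a$ hold with an \emph{arbitrarily small} constant $h$, at the price of a single fixed growth rate $r>0$, while $f\in \Sigma _s^\sigma$ forces $\widehat f\in \Sigma _\sigma^s$ with arbitrary exponential decay rate $r_0$. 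Applying Lemma \ref{Lemma:theorem2} to the family $\xi \mapsto \xi^\gamma D_x^{\beta-\gamma}a(x,\xi)\widehat f(\xi)$ then yields
$$
|(1-\Delta _\xi)^j(\xi ^\gamma D_x^{\beta-\gamma}a(x,\xi )\widehat f(\xi ))|\lesssim h^{|\beta+\gamma|+j}j!^{2s}\gamma !^\sigma (\beta-\gamma )!^\sigma e^{r|x|^{1/s}-r_0|\xi|^{1/\sigma}}
$$
for every $h,r_0>0$.

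From this point I would follow the same geometric summation in $j$ (with $\tau h<1$), reducing to $|g_{\tau,\beta,\gamma}(x)|\lesssim h^{|\beta|}\beta !^\sigma e^{r|x|^{1/s}}$, and then invoke Lemma \ref{Lemma:mstEst} together with the sharp bound \eqref{paola} to estimate $|x^\alpha|/m_{s,\tau}(x)$. The main obstacle, and the genuine difference from Theorem \ref{Thm:theorem2}, lies precisely here: the Beurling estimate demands arbitrarily small constants $h_0$ in the polynomial weight and arbitrarily large effective decay rates beyond the fixed $r$, and both must be delivered by a single choice of $\tau$. A careful reading of \eqref{paola} shows that as $\tau \to \infty$ both $h_0\to 0$ and the decay rate in Lemma \ref{Lemma:mstEst} tends to infinity, so $\tau$ can be tuned to meet any prescribed pair of requirements. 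Combining everything yields $|x^\alpha D^\beta \op _0(a)f(x)|\lesssim h_0^{|\alpha|}(2h)^{|\beta|}\alpha !^s\beta !^\sigma e^{-r_1|x|^{1/s}}$ for arbitrarily small $h,h_0$ and arbitrarily large $r_1$, so $\op _0(a)f\in \Sigma _s^\sigma(\rr d)$, with continuity uniform over bounded subsets.

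For the extension to $(\Sigma _s^\sigma)'(\rr d)$ I would use transposition. The formal transpose ${}^t\op _0(a)$ has kernel equal to the transpose of the kernel of $\op _0(a)$, so ${}^t\op _0(a)=\op _I(\widetilde a)$ with $\widetilde a(x,\xi)=a(x,-\xi)$, and by Theorem \ref{ThmCalculiTransf2}(4) its Kohn–Nirenberg symbol again belongs to $\Gamma _{s,\sigma}^{\sigma ,s;0}(\rr {2d})$. Continuity of this transpose on $\Sigma _s^\sigma(\rr d)$, established by the same argument, then yields by duality the unique continuous extension of $\op _A(a)$ to $(\Sigma _s^\sigma)'(\rr d)$.
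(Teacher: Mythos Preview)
Your argument is sound but follows a genuinely different route from the paper. The paper does \emph{not} adapt the $m_{s,\tau}$-based oscillatory-integral scheme of Theorem~\ref{Thm:theorem2}; instead it works entirely through the short-time Fourier transform. After reducing to $A=0$, the paper writes $g=\op_0(a)f$ via the kernel $K_{0,a}$, uses Moyal's identity to express $V_{\phi_2}g(x,\xi)$ as a pairing $\scal{J(x,\xi,\cdo)}{F}$ where $J=V_\phi K_{0,a}$ and $F(y,\eta)=V_{\phi_1}f(y,-\eta)$, and then combines the STFT kernel estimate of Proposition~\ref{Prop:prop2B} with the STFT characterization of $\Sigma_s^\sigma$ to obtain $|V_{\phi_2}g(x,\xi)|\lesssim \nm f{\maclS_{s;h}^\sigma}e^{-r(|x|^{1/s}+|\xi|^{1/\sigma})}$ for every $r>0$; continuity is then read off directly in STFT seminorms. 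Your approach stays closer to classical pseudo-differential technique and requires no STFT machinery beyond Lemma~\ref{Lemma:mstEst}: the decisive observation that $h_0(\tau)\to 0$ and the decay rate in \eqref{Eq:mstEst} tends to infinity as $\tau\to\infty$, together with the Beurling freedom to choose $h$ arbitrarily small (so that $\tau h<1$ remains compatible), is exactly what makes the adaptation go through. One small correction: in your duality step the relevant invariance is Theorem~\ref{ThmCalculiTransf2}(3), not (4). The paper's STFT route has the payoff that the same kernel-STFT formula is reused verbatim in the composition result (Theorem~\ref{Thm:GammaAlgebras}); your route has the payoff of being self-contained within the tools already assembled for Theorem~\ref{Thm:theorem2}.
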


\par

\begin{proof}
By Theorem \ref{Thm:CalculiTransf} we may assume that $A=0$.
Let
$$
g(x) = \op _0(a)f(x) = ({K_{0,a}(x,\cdo )},{\overline f})
=
({h_{a,x}},\overline f),
$$
where $h_{a,x}=K_{0,a}(x,\cdo )$, and let $\phi _j
\in \Sigma _s^\sigma (\rr d)$ be such that
$\nm {\phi _j}{L^2}=1$, $j=1,2$.
By Moyal's identity we get
$$
g(x) = ( {h_{a,x}},\overline f )_{L^2(\rr d)}
=
({V_{\phi _1}h_{a,x}},{V_{\phi _1}\overline f})_{L^2(\rr {2d})},
$$
and applying the short-time Fourier transform on $g$
and using Fubbini's theorem on distributions
we get
$$
V_{\phi _2}g(x,\xi ) = \scal {J(x,\xi ,\cdo )}{F},
$$
where
$$
F(y,\eta ) = V_{\phi _1}f(y,-\eta ),\quad
J(x,\xi ,y,\eta ) = V_\phi K_{0,a}(x,y,\xi ,\eta)
$$
and 
$\phi =\phi _2\otimes \phi _1 .$

\par

Now suppose that $r>0$ is arbitrarily chosen.
By Proposition \ref{prop2} we get for some
$c\in (0,1)$ which depends on $s$ and $\sigma$ only,
that for some $r_0>0$ and with $r_1= (r+r_0)/c$ that
\begin{multline*}
|J(x,\xi ,y,\eta )|
\lesssim
e^{r_0(|x|^{\frac 1s} + |\eta |^{\frac 1\sigma })}
e^{-r_1(|y-x|^{\frac 1s}+|\xi +\eta |^{\frac 1{\sigma}})}
\\[1ex]
\lesssim
e^{-( (cr_1-r_0)|x|^{\frac 1s}+cr_1|\xi |^{\frac 1\sigma})}
e^{r_1|y|^{\frac 1s}+(r_1+r_0)|\eta |^{\frac 1\sigma}}
\\[1ex]
\lesssim
e^{- r(|x|^{\frac 1s}+|\xi |^{\frac 1\sigma})}
e^{r_2(|y|^{\frac 1s}+|\eta |^{\frac 1\sigma})},
\end{multline*}
where $r_2$ only depends on $r$ and $r_0$.

\par

Since $f\in \Sigma _s^\sigma (\rr d)$ we have
$$
|F(x,\xi )|\lesssim \nm f{S_{s;h}^\sigma}
e^{-(1+r_2)(|x|^{\frac 1s}+|\xi |^{\frac 1\sigma})}
$$
for some $h>0$ depdending on $r_2$, and thereby by
$r$ and $r_0$ only. This implies
\begin{multline}\label{Eq:PseudoDiffSemiNormEst}
|V_{\phi _2}g(x,\xi )| = 
|\scal {J(x,\xi ,\cdo )}{F}|
\\[1ex]
\lesssim
\nm f{S_{s;h}^\sigma}
\left (
\iint e^{r_2(|y|^{\frac 1s}+|\eta |^{\frac 1\sigma})}
e^{-(1+r_2)(|y|^{\frac 1s}+|\eta |^{\frac 1\sigma})}
\, dyd\eta
\right )
e^{- r(|x|^{\frac 1s}+|\xi |^{\frac 1\sigma})}
\\[1ex]
\asymp
\nm f{S_{s;h}^\sigma}
e^{- r(|x|^{\frac 1s}+|\xi |^{\frac 1\sigma})}
\end{multline}
which shows that $g\in \Sigma _s^\sigma (\rr d)$
in view of \cite[Proposition 2.1]{To15}.

\par

Since the topology of $\Sigma _s^\sigma (\rr d)$
is given by the semi-norms
$$
g\mapsto \sup _{x,\xi \in \rr d}|V_{\phi _2}g(x,\xi )
e^{r(|x|^{\frac 1s}+|\xi |^{\frac 1\sigma})}|
$$
it follows from \eqref{Eq:PseudoDiffSemiNormEst}
that $\op (a)$ is continuous on
$\Sigma _s^\sigma (\rr d)$.

\par

By duality it follows that $\op (a)$ is
uniquely extendable to a continuous map
on $(\Sigma _s^\sigma )'(\rr d)$.
\end{proof}

\par


\par

The following result follows by similar arguments as
in the previous proof. The
verifications are left for the reader.

\par

\begin{thm}\label{Thm:theorem3}
Let $A \in \MdR$, $s,\sigma >0$ be such that $s+\sigma \ge 1$ and
$(s,\sigma )\neq (\frac 12,\frac 12)$,
and let $a\in \Gamma _{s,\sigma}^{\sigma ,s} (\rr {2d})$.
Then $\op _A(a)$ is continuous from $\Sigma _s^\sigma (\rr d)$
to $\maclS _s^\sigma (\rr d)$, and from $(\maclS _s^\sigma )'(\rr d)$
to $(\Sigma _s^\sigma )'(\rr d)$.
\end{thm}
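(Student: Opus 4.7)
The plan is to follow the short-time Fourier transform strategy used in the proof of Theorem \ref{Thm:theorem1}, adapted to the weaker symbol hypothesis. First we reduce to the case $A=0$ via Theorem \ref{Thm:CalculiTransf}. Fix windows $\phi_1,\phi_2\in\Sigma_s^\sigma(\rr d)$ of unit $L^2$-norm, put $\phi=\phi_2\otimes\phi_1$, let $f\in\Sigma_s^\sigma(\rr d)$ and set $g=\op_0(a)f$. Exactly as in the proof of Theorem \ref{Thm:theorem1}, Moyal's identity followed by Fubini yields
\begin{equation*}
V_{\phi_2}g(x,\xi)=\scal{J(x,\xi,\cdo)}{F},
\end{equation*}
with $F(y,\eta)=V_{\phi_1}f(y,-\eta)$ and $J(x,\xi,y,\eta)=V_\phi K_{0,a}(x,y,\xi,\eta)$, where $K_{0,a}$ is the kernel of $\op_0(a)$.

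Since $a\in\Gamma_{s,\sigma}^{\sigma,s}(\rr{2d})$, Proposition \ref{Prop:prop2B} (applied with $A=0$) furnishes \emph{fixed} constants $r_0,h_0>0$ such that
\begin{equation*}
|V_\phi K_{0,a}(x,y,\xi,\eta)|\lesssim e^{r_0(|x|^{\frac 1s}+|\eta|^{\frac 1\sigma})-h_0(|\xi+\eta|^{\frac 1\sigma}+|x-y|^{\frac 1s})}.
\end{equation*}
Applying the inequality $|u+v|^{\frac 1p}\le\kappa(p^{-1})(|u|^{\frac 1p}+|v|^{\frac 1p})$ to shift the exponent into the variables $(x,\xi,y,\eta)$, one obtains constants $r_1,r_2>0$, depending only on $s$, $\sigma$, $r_0$ and $h_0$, such that
\begin{equation*}
|J(x,\xi,y,\eta)|\lesssim e^{-r_1(|x|^{\frac 1s}+|\xi|^{\frac 1\sigma})+r_2(|y|^{\frac 1s}+|\eta|^{\frac 1\sigma})}.
\end{equation*}
The decisive difference from the proof of Theorem \ref{Thm:theorem1} is that $r_1$ is now a fixed positive constant produced by $a$, not an arbitrary positive number at our disposal.

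Since $f\in\Sigma_s^\sigma(\rr d)$, Proposition \ref{Prop:STFTGelfand2} allows us to estimate $|F(y,\eta)|\lesssim \nm{f}{\maclS_{s;h}^\sigma}\,e^{-(r_2+1)(|y|^{\frac 1s}+|\eta|^{\frac 1\sigma})}$ for a suitable $h>0$. Pairing this with the bound on $J$ and integrating in $(y,\eta)$ gives
\begin{equation*}
|V_{\phi_2}g(x,\xi)|\lesssim \nm{f}{\maclS_{s;h}^\sigma}\,e^{-r_1(|x|^{\frac 1s}+|\xi|^{\frac 1\sigma})},
\end{equation*}
which by Proposition \ref{Prop:STFTGelfand2} means $g\in\maclS_s^\sigma(\rr d)$, with seminorm bounded by a continuous seminorm of $f$; hence $\op_0(a)$ is continuous from $\Sigma_s^\sigma(\rr d)$ into $\maclS_s^\sigma(\rr d)$.

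Finally, the dual assertion $(\maclS_s^\sigma)'(\rr d)\to(\Sigma_s^\sigma)'(\rr d)$ is obtained by transposition: the formal adjoint of $\op_0(a)$ with respect to the sesqui-linear $L^2$-duality coincides, up to a change of representation, with a pseudo-differential operator whose symbol again belongs to $\Gamma_{s,\sigma}^{\sigma,s}(\rr{2d})$ thanks to Theorem \ref{ThmCalculiTransf2} (3) and the invariance of the class under complex conjugation. Applying the already proven $\Sigma_s^\sigma\to\maclS_s^\sigma$ continuity to this adjoint and transposing back yields the desired extension. The main subtlety throughout is to keep track of the fact that $r_1$ is fixed while $r_2$ is dictated by $a$: the $\Sigma$-decay of $f$ absorbs $r_2$ with room to spare but can only deliver output at the fixed rate $r_1$, which is exactly why the target drops from $\Sigma_s^\sigma$ down to $\maclS_s^\sigma$.
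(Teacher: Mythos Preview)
Your argument follows exactly the route the paper itself suggests (it merely says the result ``follows by similar arguments as in the previous proof''), but there is a genuine gap at the crucial step. From the kernel bound
\begin{equation*}
|J(x,\xi,y,\eta)|\lesssim e^{r_0(|x|^{1/s}+|\eta|^{1/\sigma})-h_0(|\xi+\eta|^{1/\sigma}+|x-y|^{1/s})}
\end{equation*}
you claim to extract $|J|\lesssim e^{-r_1(|x|^{1/s}+|\xi|^{1/\sigma})+r_2(|y|^{1/s}+|\eta|^{1/\sigma})}$ with $r_1>0$. Running the same sub-additivity manipulations as in the proof of Theorem \ref{Thm:theorem1}, the coefficient in front of $-|x|^{1/s}$ comes out as $c\,h_0-r_0$ with $c=\kappa(s^{-1})^{-1}$, and this is positive only when $h_0>\kappa(s^{-1})\,r_0$. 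Under the hypothesis of Theorem \ref{Thm:theorem1} ($a\in\Gamma_{s,\sigma}^{\sigma,s;0}$) one may take $h_0$ arbitrarily large for fixed $r_0$, so the inequality comes for free; for $a\in\Gamma_{s,\sigma}^{\sigma,s}$ \emph{both} $r_0$ and $h_0$ are fixed by the symbol, and nothing in Proposition \ref{Prop:prop2B} forces $h_0>\kappa(s^{-1})\,r_0$. (A secondary issue: the reduction to $A=0$ via Theorem \ref{Thm:CalculiTransf} is not justified either, since that theorem does not list $\Gamma_{s,\sigma}^{\sigma,s}$ among its invariant classes; one could try to bypass this by applying Proposition \ref{Prop:prop2B} for general $A$, but the same obstruction reappears.)

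The gap is not a mere technicality. For $s=\sigma=1$, $d=1$, set $a(x,\xi)=e^{Mx}\,\rho^2/(\rho^2+\xi^2)$ with $M>\rho>0$; one checks directly that $a\in\Gamma_{1,1}^{1,1}(\rr 2)$ (with $h=\max(M,\rho^{-1})$, $r=M$). For $f(x)=e^{-x^2}\in\Sigma_1^1(\R)$ one has $\op_0(a)f(x)=e^{Mx}(F(D)f)(x)$ with $F(\xi)=\rho^2/(\rho^2+\xi^2)$, and a contour shift picking up the pole at $\xi=i\rho$ gives $F(D)f(x)\sim C_\rho e^{-\rho x}$ as $x\to+\infty$ with $C_\rho\neq 0$. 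Hence $\op_0(a)f(x)\sim C_\rho e^{(M-\rho)x}\to\infty$, so $\op_0(a)f\notin\maclS_1^1(\R)$. Thus the step where you assert $r_1>0$ genuinely fails, and the conclusion cannot be reached along this line; the statement itself appears to require an additional constraint linking the growth and regularity parameters of $a$.
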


\par




\par




\par

\subsection{Compositions of pseudo-differential operators}

\par

Next we deduce algebraic properties of pseudo-differential
operators considered in Theorems \ref{Thm:theorem2}, \ref{Thm:theorem1}
and \ref{Thm:theorem3}. We recall that for pseudo-differential
operators with symbols in e.{\,}g. H{\"o}rmander classes, we have
$$
\op _0(a_1\wpr _0a_2) = \op _0(a_1)\circ \op _0(a_2),
$$
when
$$
a_1\wpr _0 a_2 (x,\xi ) \equiv \big ( e^{i\scal {D_\xi}{D_y}}
(a_1(x,\xi )a_2(y,\eta ) \big )
\Big \vert _{(y,\eta )=(x,\xi )}.
$$
More generally, if $A \in \MdR$ and $a_1\wpr _Aa_2$ is defined
by
\begin{equation}\label{SharpADef}
a_1\wpr _Aa_2  \equiv e^{i\scal {AD_\xi }{D_x}}\left ( \big ( e^{-i\scal {AD_\xi }{D_x}}a_1 \big )
\wpr _0 \big ( e^{-i\scal {AD_\xi }{D_x}}a_2 \big ) \right ) ,
\end{equation}
for $a_1$ and $a_2$ belonging to certain H{\"o}rmander symbol classes, then
it follows from the analysis in \cite{Ho} that
\begin{equation}\label{SharpAconjugation}
\op _A(a_1\wpr _Aa_2) = \op _A(a_1)\circ \op _A(a_2) 
\end{equation}
for suitable $a_1$ and $a_2$.

\par

We recall that the map $a\mapsto K_{a,A}$ is
a homeomorphism from
$\maclS _{s,\sigma}^{\sigma ,s}(\rr {2d})$
to $\maclS _{s}^{\sigma}(\rr {2d})$ and from
$\Sigma _{s,\sigma}^{\sigma ,s}(\rr {2d})$
to $\Sigma _{s}^{\sigma}(\rr {2d})$. It is also
obvious that the map
$$
(K_1,K_2)
\mapsto
\left (
(x,y) \mapsto (K_1\circ K_2)(x,y) =
\int _{\rr d}K_1(x,z)K_2(z,y)\, dz
\right )
$$
is sequentially continuous from
$\maclS _{s}^{\sigma}(\rr {2d})
\times
\maclS _{s}^{\sigma}(\rr {2d})$
to
$\maclS _{s}^{\sigma}(\rr {2d})$,
and from
$\Sigma _{s}^{\sigma}(\rr {2d})
\times
\Sigma _{s}^{\sigma}(\rr {2d})$
to
$\Sigma _{s}^{\sigma}(\rr {2d})$.
Here we have identified operators with their kernels.
Since
\begin{equation}\label{Eq:KernCompositins}
\begin{aligned}
(K_1\circ K_2\circ K_3)(x,y) &= \scal {K_2}{T_{K_1,K_3}(x,y,\cdo )}
\\[1ex]
\text{with}\quad
T_{K_1,K_3}(x,y,z_1,z_2) &= K_1(x,z_1)K_3(z_2,y)
\end{aligned}
\end{equation}
when $K_j\in L^2(\rr {2d})$, $j=1,2,3$, and that
$$
(K_1,K_2,K_3)
\mapsto 
\left (
(x,y)
\mapsto 
\scal {K_2}{T_{K_1,K_3}(x,y,\cdo )}
\right )
$$
is sequentially continuous from
$\maclS _s^\sigma (\rr {2d})\times
(\maclS _s^\sigma )'(\rr {2d})\times
\maclS _s^\sigma (\rr {2d})$ to $\maclS _s^\sigma (\rr {2d})$,
and from
$\Sigma _s^\sigma (\rr {2d})\times
(\Sigma _s^\sigma )'(\rr {2d})\times
\Sigma _s^\sigma (\rr {2d})$ to
$\Sigma _s^\sigma (\rr {2d})$,
the following result follows from these continuity results
and \eqref{SharpAconjugation}.

\par

\begin{prop}\label{Prop:GelShilAlgebras}
Let $A \in \MdR$, and let $s,\sigma >0$ be such that
$s+\sigma \ge 1$. Then the following is true:
\begin{enumerate}
\item the map $(a_1,a_2)\mapsto a_1\wpr _Aa_2$ is continuous
from $\maclS _{s,\sigma}^{\sigma ,s}(\rr {2d})
\times \maclS _{s,\sigma}^{\sigma ,s}(\rr {2d})$ to
$\maclS _{s,\sigma}^{\sigma ,s}(\rr {2d})$;

\vrum

\item the map $(a_1,a_2)\mapsto a_1\wpr _Aa_2$ is continuous
from $\Sigma _{s,\sigma}^{\sigma ,s}(\rr {2d})
\times
\Sigma _{s,\sigma}^{\sigma ,s}(\rr {2d})$ to
$\Sigma _{s,\sigma}^{\sigma ,s}(\rr {2d})$;

\vrum

\item the map $(a_1,a_2,a_3)\mapsto a_1\wpr _Aa_2\wpr _Aa_3$
from $\maclS _{s,\sigma}^{\sigma ,s}(\rr {2d})
\times \maclS _{s,\sigma}^{\sigma ,s}(\rr {2d})
\times \maclS _{s,\sigma}^{\sigma ,s}(\rr {2d})$ to
$\maclS _{s,\sigma}^{\sigma ,s}(\rr {2d})$ extends
uniquely to a sequentially continuous and associative
map from $\maclS _{s,\sigma}^{\sigma ,s}(\rr {2d})
\times (\maclS _{s,\sigma}^{\sigma ,s})'(\rr {2d})
\times \maclS _{s,\sigma}^{\sigma ,s}(\rr {2d})$ to
$\maclS _{s,\sigma}^{\sigma ,s}(\rr {2d})$;

\vrum

\item the map $(a_1,a_2,a_3)\mapsto a_1\wpr _Aa_2\wpr _Aa_3$
from $\Sigma _{s,\sigma}^{\sigma ,s}(\rr {2d})
\times \Sigma _{s,\sigma}^{\sigma ,s}(\rr {2d})
\times \Sigma _{s,\sigma}^{\sigma ,s}(\rr {2d})$ to
$\Sigma _{s,\sigma}^{\sigma ,s}(\rr {2d})$ extends
uniquely to a sequentially continuous and associative
map from $\Sigma _{s,\sigma}^{\sigma ,s}(\rr {2d})
\times (\Sigma _{s,\sigma}^{\sigma ,s})'(\rr {2d})
\times \Sigma _{s,\sigma}^{\sigma ,s}(\rr {2d})$ to
$\Sigma _{s,\sigma}^{\sigma ,s}(\rr {2d})$;
\end{enumerate}
\end{prop}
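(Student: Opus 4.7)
The plan is to reduce every assertion to the corresponding statement about integral kernels, exploiting the fact, recorded just above the proposition, that $a\mapsto K_{a,A}$ is a homeomorphism from $\maclS_{s,\sigma}^{\sigma,s}(\rr{2d})$ onto $\maclS_s^\sigma(\rr{2d})$, from $\Sigma_{s,\sigma}^{\sigma,s}(\rr{2d})$ onto $\Sigma_s^\sigma(\rr{2d})$, and between the corresponding duals. Combining \eqref{SharpAconjugation} with the definition of the kernel of a pseudo-differential operator, the twisted product $\wpr_A$ corresponds under this homeomorphism to ordinary kernel composition; that is, $K_{a_1\wpr_A a_2,A}=K_{a_1,A}\circ K_{a_2,A}$. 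Once this correspondence is in place, the continuity statements in (1) and (2) become immediate: the map $(a_1,a_2)\mapsto a_1\wpr_A a_2$ is the composite of three sequentially continuous maps
\begin{equation*}
(a_1,a_2)\ \longmapsto\ (K_{a_1,A},K_{a_2,A})\ \longmapsto\ K_{a_1,A}\circ K_{a_2,A}\ \longmapsto\ a_1\wpr_A a_2,
\end{equation*}
the middle arrow being exactly the continuity of kernel composition asserted above the proposition. The argument is identical for $\maclS$ and $\Sigma$.

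For (3) and (4) I use the identity \eqref{Eq:KernCompositins}. When $K_1,K_2,K_3$ are all test kernels the triple kernel composition equals the duality pairing of $K_2$ with the parameter-dependent tensor $T_{K_1,K_3}(x,y,z_1,z_2)=K_1(x,z_1)K_3(z_2,y)$. Since $(K_1,K_3)\mapsto T_{K_1,K_3}$ is sequentially continuous from $\maclS_s^\sigma(\rr{2d})\times\maclS_s^\sigma(\rr{2d})$ into $\maclS_s^\sigma(\rr{4d})$ (this is elementary, using the Gelfand-Shilov characterization of tensor products), and since, by the text preceding the proposition, the trilinear map
\begin{equation*}
(K_1,K_2,K_3)\ \longmapsto\ \bigl((x,y)\mapsto \langle K_2,T_{K_1,K_3}(x,y,\cdot)\rangle\bigr)
\end{equation*}
is sequentially continuous from $\maclS_s^\sigma(\rr{2d})\times(\maclS_s^\sigma)'(\rr{2d})\times\maclS_s^\sigma(\rr{2d})$ into $\maclS_s^\sigma(\rr{2d})$, transporting back to symbols via the homeomorphism $a\mapsto K_{a,A}$ yields the desired continuous extension of the ternary product to $\maclS_{s,\sigma}^{\sigma,s}\times(\maclS_{s,\sigma}^{\sigma,s})'\times\maclS_{s,\sigma}^{\sigma,s}$. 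The $\Sigma$ case is entirely parallel. Associativity holds trivially when all three factors are test symbols (by Fubini applied to the iterated kernel integral), and then persists under the sequentially continuous extension.

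Uniqueness of the extension in (3) and (4) follows from the sequential density of the test space in the corresponding dual; in combination with the joint sequential continuity of the trilinear map above, any two sequentially continuous extensions must agree. It therefore only remains to verify the continuity assertion that I invoked for the trilinear pairing, which is the main technical point of the proof.

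The main obstacle is this last verification: showing that pairing a Gelfand–Shilov distribution $K_2$ against a family of test kernels $T_{K_1,K_3}(x,y,\cdot)$ depending smoothly on $(x,y)$ produces an output which is again Gelfand–Shilov in $(x,y)$, with explicit continuity bounds. The natural way to handle this is via the short-time Fourier transform characterizations recorded in Propositions \ref{Prop:STFTGelfand2} and \ref{Prop:STFTGelfand2Dist}: choosing a window $\phi=\phi_1\otimes\phi_2$ one computes $V_{\phi_1}$ of the pairing in the $(x,y)$ variables and pushes the STFT inside the duality bracket, reducing to a product of STFT bounds on $T_{K_1,K_3}$ (exponentially decaying in its $(z_1,z_2)$-variables to any order, uniformly on compact parameter sets) and on $K_2$ (at most exponentially growing in the dual variables). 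The two types of bounds combine to give the exponential decay of $V_{\phi_1}$ of the output in $(x,y)$ which, again by Proposition \ref{Prop:STFTGelfand2}, places it in $\maclS_s^\sigma$ (resp.\ $\Sigma_s^\sigma$), with the required continuous dependence on $(K_1,K_2,K_3)$.
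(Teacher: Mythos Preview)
Your proposal is correct and follows essentially the same route as the paper. The paper's ``proof'' is in fact just the discussion immediately preceding the proposition: it records that $a\mapsto K_{a,A}$ is a homeomorphism between the relevant spaces, asserts (as ``obvious'') the sequential continuity of kernel composition and of the trilinear map $(K_1,K_2,K_3)\mapsto\scal{K_2}{T_{K_1,K_3}(x,y,\cdo)}$, and then says the proposition follows from these facts together with \eqref{SharpAconjugation}. Your write-up reproduces exactly this reduction, and even goes further by sketching an STFT-based verification of the trilinear continuity, which the paper simply takes for granted.
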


\par

We have the following corresponding algebra result for
$\Gamma _{s,\sigma}^{\sigma ,s;0}$ and related symbol classes.

\par

\begin{thm}\label{Thm:GammaAlgebras}
Let $A \in \MdR$, and let $s,\sigma >0$ be such that
$s+\sigma \ge 1$. Then the following is true:
\begin{enumerate}
\item the map {\rm{(1)}} in Proposition \ref{Prop:GelShilAlgebras}
extends uniquely to a continuous map from
$\Gamma _{s,\sigma ;0}^{\sigma ,s}(\rr {2d})\times
\Gamma _{s,\sigma ;0}^{\sigma ,s}(\rr {2d})$ to
$\Gamma _{s,\sigma ;0}^{\sigma ,s}(\rr {2d})$, and
from
$\Gamma _{s,\sigma ;0}^{\sigma ,s;0}(\rr {2d})\times
\Gamma _{s,\sigma ;0}^{\sigma ,s;0}(\rr {2d})$ to
$\Gamma _{s,\sigma ;0}^{\sigma ,s;0}(\rr {2d})$;

\vrum

\item if in addition $(s,\sigma )\neq (\frac 12,\frac 12)$,
the map {\rm{(2)}} in Proposition \ref{Prop:GelShilAlgebras}
extends uniquely to a continuous map from
$\Gamma _{s,\sigma }^{\sigma ,s;0}(\rr {2d})\times
\Gamma _{s,\sigma }^{\sigma ,s;0}(\rr {2d})$ to
$\Gamma _{s,\sigma}^{\sigma ,s;0}(\rr {2d})$, and
from
$\Gamma _{s,\sigma }^{\sigma ,s}(\rr {2d})\times
\Gamma _{s,\sigma }^{\sigma ,s;0}(\rr {2d})$
or from
$\Gamma _{s,\sigma }^{\sigma ,s;0}(\rr {2d})\times
\Gamma _{s,\sigma }^{\sigma ,s}(\rr {2d})$ 
to
$\Gamma _{s,\sigma}^{\sigma ,s}(\rr {2d})$.
\end{enumerate}
\end{thm}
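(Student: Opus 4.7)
The plan is to first reduce to the Kohn--Nirenberg case $A=0$ using the invariance results already established in the paper, and then to extend the composition from the Gelfand--Shilov level to the larger $\Gamma$-classes by working at the kernel level via the short-time Fourier transform.

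From the defining identity \eqref{SharpADef},
$$
a_1\wpr _Aa_2=e^{i\scal{AD_\xi}{D_x}}\bigl((e^{-i\scal{AD_\xi}{D_x}}a_1)\wpr _0(e^{-i\scal{AD_\xi}{D_x}}a_2)\bigr),
$$
and by parts (3)--(4) of Theorem \ref{Thm:CalculiTransf} together with Theorem \ref{Thm:CalculiTransfbis}, $e^{\pm i\scal{AD_\xi}{D_x}}$ is a homeomorphism on each of the $\Gamma$-spaces appearing in the theorem; the hypothesis $(s,\sigma)\neq(\tfrac 12,\tfrac 12)$ in part (2) is exactly what enables the Beurling-type version of this invariance. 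Hence it suffices to prove the theorem for $A=0$. On the dense Gelfand--Shilov subclasses $\maclS _{s,\sigma}^{\sigma ,s}$ (resp.\ $\Sigma _{s,\sigma}^{\sigma ,s}$), Proposition \ref{Prop:GelShilAlgebras} already defines $a_1\wpr _0a_2$, which coincides with the symbol of $\op _0(a_1)\circ \op _0(a_2)$; the corresponding kernel is $K_1\circ K_2(x,y)=\int K_{0,a_1}(x,z)K_{0,a_2}(z,y)\,dz$.

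To extend continuously to the full $\Gamma$-classes, I fix a window $\phi _0\in \Sigma _s^\sigma (\rr d)\setminus 0$ (resp.\ $\maclS _s^\sigma (\rr d)\setminus 0$) and set $\phi =\phi _0\otimes \phi _0$. A direct Fourier computation exploiting the tensor-product structure of $\phi$ yields a convolution-type majorization of the STFT of $K_1\circ K_2$ in terms of partial STFTs of $K_1$ and $K_2$ in each of their two arguments. Lemma \ref{Lemma:STFTSymbolKernel} then transfers these partial STFT estimates back to bounds on $V_\psi a_j$ for suitable windows $\psi$, and Propositions \ref{Prop:prop2B} and \ref{Prop:prop2'B} translate, in both directions, between STFT bounds on kernels and symbol estimates on $a_j$ (resp.\ on $a_1\wpr _0a_2$) in the various $\Gamma$-classes at hand. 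Continuity of the product then follows by monitoring the dependence of the constants on the seminorms of $a_1,a_2$, and the unique extension of $\wpr _0$ to the larger classes is obtained by density.

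The main obstacle is the bookkeeping of the two exponential parameters through these convolution estimates. In part (1), both factors have \emph{fixed} Gevrey constants and \emph{arbitrarily small} exponential growth rates $r_1,r_2$; the convolution essentially sums these rates, up to a bounded factor involving $\kappa (s^{-1})$ and $\kappa (\sigma ^{-1})$ coming from the inequality $|x+y|^{1/s}\le \kappa (s^{-1})(|x|^{1/s}+|y|^{1/s})$, which keeps the resulting rate arbitrarily small, while the Gevrey constants only inflate by controllable combinatorial factors. In part (2) the situation is dual: the Gevrey constants $h_1,h_2$ are allowed arbitrarily small, so that the product Gevrey constant $h'\sim h_1h_2$ remains arbitrary, while the fixed growth rates are only inflated by a bounded factor and thus still fit within the existential quantifier on the output class. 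The mixed inputs in part (2) work because it suffices that one of the factors supplies an arbitrarily small Gevrey constant to absorb the Leibniz-type sums appearing in the STFT estimates.
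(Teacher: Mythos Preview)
Your strategy matches the paper's: reduce to $A=0$ via Theorem \ref{Thm:CalculiTransf}, pass to kernels, and control the STFT of the composed kernel through the STFTs of $K_1$ and $K_2$, reading off membership in the target $\Gamma$-class via Propositions \ref{Prop:prop2B}--\ref{Prop:prop2'B}. The paper makes your ``convolution-type majorization'' explicit through Moyal's identity, obtaining the exact identity
\[
V_{\phi _1\otimes \phi _3}K(x,y,\xi,\eta)=\iint _{\rr{2d}}V_{\phi _1\otimes \phi _2}K_1(x,z,\xi,\zeta)\,V_{\phi _2\otimes \phi _3}K_2(z,y,-\zeta,\eta)\,dz\,d\zeta ,
\]
and then tracks the exponential weights through the double integral.

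Where your account goes off is the bookkeeping paragraph. In the STFT picture there are no ``Leibniz-type sums'' and no multiplicative combination ``$h'\sim h_1h_2$'' of Gevrey constants: the derivative bounds have already been traded for exponential decay rates in the dual variables $(\xi+\eta,\,y-x)$, and the interaction is \emph{additive}, governed by $|u+v|^{1/s}\le c(|u|^{1/s}+|v|^{1/s})$. Concretely (for the first case in (2)), given a target decay rate $h$ one must choose the input decay rate $h_2\ge 2ch+cr$ for both $F_1$ and $F_2$---possible because $\Gamma^{\sigma,s;0}_{s,\sigma}$ provides every decay rate---and the fixed growth parameter $r$ merely dilates to $2cr$. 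The mixed case works for the same reason: one factor supplying arbitrarily large $h_2$ is enough to dominate the other's fixed decay. Finally, uniqueness does not rest on density (which is delicate for these inductive-limit spaces): once the STFT estimate on $K$ is established, the symbol is determined by the bijection $a\mapsto K_{0,a}$, and on the Gelfand--Shilov subclass this agrees with Proposition \ref{Prop:GelShilAlgebras} by construction.
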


\par

\begin{proof}
We prove only the first assertion in (2). The other statements
follow by similar arguments and are left for the reader.

\par

By Theorem \ref{ThmCalculiTransf2} it suffices to consider the
case when $A=0$. Let $\phi _1,\phi _2,\phi _3\in \Sigma _s^\sigma (\rr d)
\setminus 0$,
$a_j\in \Gamma _{s,\sigma}^{\sigma ,s;0}(\rr {2d})$,
$j=1,2$, and let $K$ be the kernel of $\op _0(a_1)\circ \op _0(a_2)$.
By Proposition \ref{Prop:prop2B} we need to prove that for some
$r>0$,
\begin{equation}\label{Eq:STFTProdKernel}
|V_{\phi _1\otimes \phi _3}K(x,y,\xi ,\eta )|
\lesssim
e^{r(|x|^{\frac 1s}+|\eta |^{\frac 1\sigma})
-h(|\xi +\eta |^{\frac 1\sigma}+|x-y|^{\frac 1s})}
\end{equation}
for every $h>0$.

\par

Therefore, let $h>0$ be arbitrarily chosen but fixed, and let
$K_j$ be the kernel of $\op _0(a_j)$, $j=1,2$,
\begin{align*}
F_1(x,y,\xi ,\eta ) &= V_{\phi _1\otimes \phi _2}K_1(x,y,\xi ,\eta ),
\\[1ex]
F_2(x,y,\xi ,\eta ) &= V_{\phi _2\otimes \phi _3}K_2(x,y, -\xi ,\eta )
\intertext{and}
G(x,y,\xi ,\eta ) &= V_{\phi _1\otimes \phi _3}K(x,y,\xi ,\eta ).
\end{align*}
Then
\begin{equation}\label{Eq:GIntegralFormula}
G(x,y,\xi ,\eta ) = \iint _{\rr {2d}}F_1(x,z,\xi ,\zeta )
F_2(z,y,\zeta ,\eta )\, dzd\zeta
\end{equation}
by Moyal's identity (cf. proof of Theorem \ref{Thm:theorem1}).
Since $a_j\in \Gamma _{s,\sigma}^{\sigma ,s;0}(\rr {2d})$ we have for
some $r>0$ that
\begin{align*}
|F_1(x,y,\xi ,\eta )|
&\lesssim
e^{r(|x|^{\frac 1s}+|\eta |^{\frac 1\sigma})
-h_1(|\xi +\eta |^{\frac 1\sigma}+|x-y|^{\frac 1s})}
\intertext{and}
|F_2(x,y,\xi ,\eta )|
&\lesssim
e^{r(|x|^{\frac 1s}+|\eta |^{\frac 1\sigma})
-h_1(|\xi -\eta |^{\frac 1\sigma}+|x-y|^{\frac 1s})}
\end{align*}
for every $h_1>0$.
By combining this with \eqref{Eq:GIntegralFormula} we get for some
$r>0$,
\begin{equation}\label{Eq:GEst}
|G(x,y,\xi ,\eta )|
\lesssim
\iint _{\rr {2d}}
e^{\fy _{r,h_2}(x,y,z,\xi ,\eta ,\zeta ) + \psi _{r,h_2}(x,y,z,\xi ,\eta ,\zeta )}
\, dzd\zeta ,
\end{equation}
where $h_2\ge 2ch+cr$,
\begin{align*}
\fy _{r,h}
(x,y,z,\xi ,\eta ,\zeta ) &= r(|x|^{\frac 1s} +|\zeta |^{\frac 1\sigma})
-h(|\zeta -\eta |^{\frac 1\sigma}+|y-z|^{\frac 1s})
\intertext{and}
\psi _{r,h}
(x,y,z,\xi ,\eta ,\zeta ) &= r(|z|^{\frac 1s} +|\eta |^{\frac 1\sigma})
-h(|\xi +\zeta |^{\frac 1\sigma}+|x-z|^{\frac 1s}).
\end{align*}
and $c\ge 1$ is chosen such that
$$
|x+y|^{\frac 1s}\le c(|x|^{\frac 1s}+|y|^{\frac 1s})
\quad \text{and}\quad
|\xi +\eta |^{\frac 1\sigma}\le c(|\xi |^{\frac 1\sigma}+|\eta|
^{\frac 1\sigma}),
\quad x,y,\xi ,\eta \in \rr d.
$$

\par

Then
\begin{align*}
\fy _{r,h_2}
(x,y,z,\xi ,\eta ,\zeta ) &\le cr(|x|^{\frac 1s} +|\eta |^{\frac 1\sigma})
-(h_2-cr)(|\zeta -\eta |^{\frac 1\sigma}+|y-z|^{\frac 1s})
\\[1ex]
&\le cr(|x|^{\frac 1s} +|\eta |^{\frac 1\sigma})
-2ch(|\zeta -\eta |^{\frac 1\sigma}+|y-z|^{\frac 1s})
\intertext{and}
\psi _{r,h_2}
(x,y,z,\xi ,\eta ,\zeta ) &\le cr(|x|^{\frac 1s} +|\eta |^{\frac 1\sigma})
-2ch(|\xi +\zeta |^{\frac 1\sigma}+|x-z|^{\frac 1s}).
\end{align*}
This gives
\begin{multline*}
\fy _{r,h_2}(x,y,z,\xi ,\eta ,\zeta )
+ \psi _{r,h_2}(x,y,z,\xi ,\eta ,\zeta )
\\[1ex]
\le
2cr(|x|^{\frac 1s} +|\eta |^{\frac 1\sigma})
-2ch(|\xi +\zeta |^{\frac 1\sigma}
+ |\zeta -\eta |^{\frac 1\sigma}
+|x-z|^{\frac 1s}
+|y-z|^{\frac 1s}).
\end{multline*}
Since
\begin{multline*}
-2ch(|\xi +\zeta |^{\frac 1\sigma}
+ |\zeta -\eta |^{\frac 1\sigma}
+|x-z|^{\frac 1s}
+|y-z|^{\frac 1s})
\\[1ex]
\le
-h(|\xi +\eta |^{\frac 1\sigma}
+|x-y|^{\frac 1s})
- ch(|\xi +\zeta |^{\frac 1\sigma}
+ |\zeta -\eta |^{\frac 1\sigma}
+|x-z|^{\frac 1s}
+|y-z|^{\frac 1s})
\\[1ex]
\le
-h(|\xi +\eta |^{\frac 1\sigma}
+|x-y|^{\frac 1s})
- ch(|\xi +\zeta |^{\frac 1\sigma}
+|x-z|^{\frac 1s})
\end{multline*}
we get by combining these estimates with \eqref{Eq:GEst}
that
\begin{multline*}
|G(x,y,\xi ,\eta )|
\lesssim
\iint _{\rr {2d}}
e^{2cr(|x|^{\frac 1s} +|\eta |^{\frac 1\sigma})
-h(|\xi +\eta |^{\frac 1\sigma}
+|x-y|^{\frac 1s})
- ch(|\xi +\zeta |^{\frac 1\sigma}
+|x-z|^{\frac 1s})}
\, dzd\zeta ,
\\[1ex]
\asymp
e^{2cr(|x|^{\frac 1s} +|\eta |^{\frac 1\sigma})
-h(|\xi +\eta |^{\frac 1\sigma}
+|x-y|^{\frac 1s})}.
\end{multline*}
Since $r>0$ is fixed and $h>0$ can be chosen arbitrarily,
the result follows.
\end{proof}

\par

\begin{thm}\label{Composition}
Let $A \in \MdR$, $s,\sigma >0$ be such that $s+\sigma \ge 1$,
and let $\omega_j \in \mathscr P_{s,\sigma}(\rr {2d})$, $j=1,2$.
Then the following is true:
\begin{enumerate}
\item the map $(a_1,a_2)\mapsto a_1\wpr _Aa_2$ from
$\Sigma _{s,\sigma}^{\sigma ,s}(\rr {2d})
\times
\Sigma _{s,\sigma}^{\sigma ,s}(\rr {2d})$
to $\Sigma _{s,\sigma}^{\sigma ,s}(\rr {2d})$ is uniquely
extendable to a continuous map from
$\Gamma _{(\omega _1)}^{\sigma ,s;0}(\rr {2d})
\times
\Gamma _{(\omega _2)}^{\sigma ,s;0}(\rr {2d})$ to
$\Gamma _{(\omega _1\omega _2)}^{\sigma ,s;0})(\rr {2d})$;

\vrum

\item if in addition $\omega_j \in
\mathscr P_{s,\sigma}^0(\rr {2d})$, $j=1,2$,
then the map $(a_1,a_2)\mapsto a_1\wpr _Aa_2$ from
$\maclS _{s,\sigma}^{\sigma ,s}(\rr {2d})
\times
\maclS _{s,\sigma}^{\sigma ,s}(\rr {2d})$
to $\maclS _{s,\sigma}^{\sigma ,s}(\rr {2d})$ is uniquely
extendable to a continuous map from
$\Gamma _{(\omega _1)}^{\sigma ,s}(\rr {2d})
\times
\Gamma _{(\omega _2)}^{\sigma ,s}(\rr {2d})$ to
$\Gamma _{(\omega _1\omega _2)}^{\sigma ,s}(\rr {2d})$.
\end{enumerate}
\end{thm}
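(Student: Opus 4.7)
The plan is to follow the blueprint of Theorem \ref{Thm:GammaAlgebras}, now tracking the weights $\omega_1,\omega_2$ by systematic use of the submultiplicativity/moderate property \eqref{estomega}. By Theorem \ref{Thm:CalculiTransfbis}, $e^{i\scal{AD_\xi}{D_x}}$ is a homeomorphism on $\Gamma_{(\omega)}^{\sigma,s;0}(\rr{2d})$ for every $\omega\in\mascP_{s,\sigma}(\rr{2d})$, and the analogous statement for the Roumieu classes $\Gamma_{(\omega)}^{\sigma,s}(\rr{2d})$ follows from \cite{CaTo} and the arguments used in that theorem. In view of the definition \eqref{SharpADef} it therefore suffices to prove both (1) and (2) in the case $A=0$.

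Write $a=a_1\wpr_0 a_2$, let $K,K_1,K_2$ be the kernels of $\op_0(a),\op_0(a_1),\op_0(a_2)$, so that $K=K_1\circ K_2$, and fix windows $\phi_1,\phi_2,\phi_3\in\Sigma_s^\sigma(\rr d)\setminus 0$. By Moyal's identity (exactly as in \eqref{Eq:GIntegralFormula} in the proof of Theorem \ref{Thm:GammaAlgebras}),
\begin{equation*}
V_{\phi_1\otimes\phi_3} K(x,y,\xi,\eta)
=
\iint_{\rr{2d}}
V_{\phi_1\otimes\phi_2}K_1(x,z,\xi,\zeta)\,
V_{\phi_2\otimes\phi_3}K_2(z,y,-\zeta,\eta)\, dz\,d\zeta .
\end{equation*}
Using Lemma \ref{Lemma:STFTSymbolKernel} with $A=0$, each $V_{\phi_j\otimes\phi_{j+1}}K_j$ is expressed in terms of $V_{\Phi_j}a_j$ for suitable $\Phi_j\in\Sigma_{s,\sigma}^{\sigma,s}(\rr{2d})\setminus 0$, and Proposition \ref{Prop:WeightedGSCharSTFT} provides, for every $r>0$ (in the Beurling case (1)) or some $r>0$ (in the Roumieu case (2)), the bound
\begin{equation*}
|V_{\Phi_j} a_j(X_j,Y_j)|
\lesssim
\omega_j(X_j)\,
e^{-r(|y_j|^{1/s}+|\eta_j|^{1/\sigma})},
\qquad X_j=(x_j,\xi_j),\ Y_j=(y_j,\eta_j).
\end{equation*}

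Substituting these bounds back into the integral for $V_{\phi_1\otimes\phi_3}K$, the task reduces to an exponential-decay estimate of exactly the type handled in the proof of Theorem \ref{Thm:GammaAlgebras} (cf. \eqref{Eq:GEst} and the subsequent lines), with the new feature that a factor $\omega_1(X_1)\omega_2(X_2)$ appears inside the integrand at shifted base points. Applying \eqref{estomega} to relocate these weights to the common base point $X=(x,\xi)$ produces a factor $\omega_1(X)\omega_2(X)$ together with an extra exponential in the slack variables of size $e^{r_0(\ldots)}$; choosing $r_0$ small compared to the decay rate that the STFT bounds make available, this extra factor is absorbed, and integration in $(z,\zeta)$ yields
\begin{equation*}
|V_{\phi_1\otimes\phi_3}K(x,y,\xi,\eta)|
\lesssim
(\omega_1\omega_2)(x,\xi)\,
e^{-h(|\xi+\eta|^{1/\sigma}+|x-y|^{1/s})}
\end{equation*}
for every $h>0$ in case (1), and for some $h>0$ in case (2). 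A final application of Lemma \ref{Lemma:STFTSymbolKernel} (now used in reverse, to translate this STFT bound on $K$ into a STFT bound on the symbol $a$) combined with Proposition \ref{Prop:WeightedGSCharSTFT} places $a$ in $\Gamma_{(\omega_1\omega_2)}^{\sigma,s;0}(\rr{2d})$ (resp. $\Gamma_{(\omega_1\omega_2)}^{\sigma,s}(\rr{2d})$). Continuity of the extended map follows from the continuity of each of these STFT estimates in the symbol semi-norms, and uniqueness of the extension is granted by the density of $\Sigma_{s,\sigma}^{\sigma,s}$ (respectively $\maclS_{s,\sigma}^{\sigma,s}$) in the corresponding weighted class endowed with a weaker topology.

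The main obstacle is the book-keeping in the middle step: one has to balance three independent quantifiers (the $r$ controlling the decay of $V a_j$, the $r_0$ controlling the slack in moderation of $\omega_j$, and the target $h$ in the final STFT estimate) so that both the final weight $\omega_1\omega_2$ and the arbitrarily fast decay in $|x-y|^{1/s}+|\xi+\eta|^{1/\sigma}$ survive the integration in $(z,\zeta)$. In the Roumieu case (2) the hypothesis $\omega_j\in\mascP_{s,\sigma}^0$ is crucial: it is precisely what makes the constant in \eqref{estomega} free to be chosen arbitrarily small, which matches the \emph{for every} $h$-quantifier present in the definition of $\Gamma_{(\omega)}^{\sigma,s}$ as opposed to $\Gamma_{(\omega)}^{\sigma,s;0}$. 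Modulo this care with parameters, the computation reduces to the one already carried out in the proof of Theorem \ref{Thm:GammaAlgebras}.
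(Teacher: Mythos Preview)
Your approach is correct in outline but differs genuinely from the paper's. You extend the STFT-kernel argument of Theorem \ref{Thm:GammaAlgebras}, inserting Proposition \ref{Prop:WeightedGSCharSTFT} to track the weights and using the moderation property \eqref{estomega} to relocate $\omega_1,\omega_2$ to a common base point before integrating. The paper instead decomposes $\wpr_0$ as a composition of three continuous maps: the tensor map $T_1:(a_1,a_2)\mapsto a_1\otimes a_2$ into $\Gamma_{(\omega_1\otimes\omega_2)}^{\sigma,s}(\rr{4d})$; the Fourier multiplier $T_2=e^{i\scal{D_{\xi_1}}{D_{x_2}}}$, continuous on this space by (the $4d$ version of) Theorem \ref{Thm:CalculiTransfbis}; and the trace $T_3:F\mapsto F(x,x,\xi,\xi)$, continuous by Proposition A.2. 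This modular route avoids the explicit parameter balancing you flag as the main obstacle, since the delicate work is already packaged into Theorem \ref{Thm:CalculiTransfbis} and its proof. Your route is more self-contained and makes the interplay between $r,r_0,h$ visible, at the cost of repeating that book-keeping.

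Two small points to clean up. First, your closing sentence has the quantifiers inverted: $\Gamma_{(\omega)}^{\sigma,s}$ carries a \emph{for some} $h$ condition, not \emph{for every} $h$; the role of $\omega_j\in\mascP_{s,\sigma}^0$ is that it lets $r_0$ in \eqref{estomega} be arbitrarily small so it does not overwhelm the single fixed decay rate $r$ available in the Roumieu case. Second, your uniqueness claim via density is delicate, since density of $\Sigma_{s,\sigma}^{\sigma,s}$ in $\Gamma_{(\omega)}^{\sigma,s;0}$ in a topology strong enough to control $\wpr_0$ has not been established in the paper. The paper obtains uniqueness instead by embedding $\Gamma_{(\omega_k)}^{\sigma,s}\subseteq\Gamma_{s,\sigma;0}^{\sigma,s}$ and invoking Theorem \ref{Thm:GammaAlgebras}, where $a_1\wpr_0 a_2$ is already uniquely determined as the symbol of $\op_0(a_1)\circ\op_0(a_2)$; you can appeal to the same mechanism.
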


\par

\begin{proof}
We may assume that $A=0$ by Theorem \ref{Thm:CalculiTransf}. We
only prove (2). The assertion (1) follows by similar arguments and
is left for the reader.

\par

Let
\begin{align*}
F_{a_1,a_2}(x_1,x_2,\xi _1,\xi _2) &= a_1(x_1,\xi _1)a_2(x_2,\xi _2)
\intertext{and}
\omega (x_1,x_2,\xi _1,\xi _2) &= \omega _1(x_1,\xi _1)\omega _2(x_2,\xi _2).
\end{align*}
By the definitions it follows that the map $T_1$ which takes
$(a_1,a_2)$ into $F_{a_1,a_2}$ is continuous from
$\Gamma _{(\omega _1)}^{\sigma ,s}(\rr {2d})\times
\Gamma _{(\omega _2)}^{\sigma ,s}(\rr {2d})$ to
$\Gamma _{(\omega )}^{\sigma ,s}(\rr {4d})$.

\par

Theorem \ref{Thm:CalculiTransfbis} declare that the map $T_2$
which takes $F(x_1,x_2,\xi _1,\xi _2)$ to 
$e^{i\scal {D_{\xi _1}}{D_{x_2}}}F(x_1,x_2,\xi _1,\xi _2)$ is continuous
on $\Gamma _{(\omega )}^{\sigma ,s}(\rr {4d})$. Hence, if $T_3$ is
the trace operator which takes $F(x_1,x_2,\xi _1,\xi _2)$ into
$F_0(x,\xi ) \equiv F(x,x,\xi,\xi )$, Proposition A.2 shows that
$T\equiv T_3\circ T_2\circ T_1$ is continuous from
$\Gamma _{(\omega _1)}^{\sigma ,s}(\rr {2d})\times
\Gamma _{(\omega _2)}^{\sigma ,s}(\rr {2d})$ to
$\Gamma _{(\omega _1\omega _2)}^{\sigma ,s}(\rr {2d})$.

\par

By \cite[Theorem 18.1.8]{Ho} we have $T(a_1,a_2)= a_1\wpr _0a_2$ when
$a_1,a_2\in \Sigma _{s,\sigma}^{\sigma ,s}(\rr {2d})$. If instead
$a_k\in \Gamma
_{(\omega _k)}^{\sigma ,s}(\rr {2d})$, $k=1,2$, then we take
$T(a_1,a_2)$ as the definition of $a_1\wpr _0a_2$. By the continuity of $T$ it follows that
$(a_1,a_2)\mapsto a_1\wpr _0a_2$ is continuous from
$\Gamma _{(\omega _1)}^{\sigma ,s}(\rr {2d})\times
\Gamma _{(\omega _2)}^{\sigma ,s}(\rr {2d})$ to
$\Gamma _{(\omega _1\omega _2)}^{\sigma ,s}(\rr {2d})$.

\par

Since $\Gamma _{(\omega _k)}^{\sigma ,s}(\rr {2d})\subseteq
\Gamma _{s,\sigma ;0}^{\sigma ,s}(\rr {2d})$, we get
$\op _0(a_1\wpr _0 a_2) = \op _0(a_1)\circ \op _0(a_2)$ and
that $a_1\wpr _0a_2$ is uniquely defined as an element in
$\Gamma _{s,\sigma ;0}^{\sigma ,s}(\rr {2d})$, in view of
Theorem \ref{Thm:GammaAlgebras}. Hence $a_1\wpr _0a_2$
is uniquely defined in $\Gamma _{(\omega _1\omega _2)}^{\sigma ,s}
(\rr {2d})$, since all these symbol classes are subspaces of
$C^\infty (\rr {2d})$. This gives the result.
\end{proof}

\par

\par

\section*{Appendix A}

\par

In what follows we prove some auxiliary results on continuity of Gevrey symbol
classes.

\par

\renewcommand{\rubrik}{Proposition A.1}

\par

\begin{tom}
Let $\sigma ,s >0$,
$\omega \in \mascP _{s ,\sigma}^0(\rr {2d})$,
$a \in \Gamma _{s,\sigma}^{\sigma ,s}(\rr {2d})$ and let $a_{\ep}=\phi (\ep \cdo )a$,
$\ep >0$, where $\phi \in \maclS _{s,\sigma}^{\sigma ,s}(\rr {2d})$
satisfies $\phi (0)=1$.
Then the following is true:
\begin{enumerate}
\item $a_\ep \to a$ in $\Gamma _{s,\sigma}^{\sigma,s}(\rr {2d})$ as $\ep \to 0^+$;

\vrum

\item If in addition $a \in \Gamma _{s,\sigma ;0}^{\sigma ,s}(\rr {2d})$, then
$a_\ep \to a$ in $\Gamma _{s,\sigma ;0}^{\sigma,s}(\rr {2d})$ as $\ep \to 0^+$;

\vrum

\item If in addition $a \in \Gamma _{s,\sigma}^{\sigma ,s ;0}(\rr {2d})$
($a \in \Gamma _{s,\sigma ;0}^{\sigma ,s;0}(\rr {2d})$) and $\phi \in
\Sigma _{s,\sigma}^{\sigma ,s}(\rr {2d})$, then
$a_\ep \to a$ in $\Gamma _{s,\sigma}^{\sigma,s ;0}(\rr {2d})$ 
(in $\Gamma _{s,\sigma ;0}^{\sigma,s;0}(\rr {2d})$) as $\ep \to 0^+$.
\end{enumerate}
\end{tom}

\par

\begin{proof}
We only prove (2). The other assertions follow by similar arguments and is
left for the reader.
Since
$\phi \in \maclS _{s,\sigma}^{\sigma ,s}(\rr {2d})$, there are constants
$C,r_0,h_0>0$ which are independent of $\delta >0$ such that
$$
|\partial _x^\alpha \partial _\xi ^\beta \phi _\ep (x,\xi )|
\le Ch_0^{|\alpha +\beta |}\ep ^{|\alpha +\beta |}\alpha !^{\sigma}\beta !^s
e^{-r_0\ep (|x|^{\frac 1s}+|\xi |^{\frac 1\sigma})}.
$$
For conveniency we also let
\begin{align*}
\nm a{s,\sigma , h,r,\alpha ,\beta } &\equiv
\sup _{x,\xi \in \rr d}\left (
\frac {e^{-r (|x| ^{\frac 1s}+|\xi |^{\frac 1\sigma})} |\partial _x^\alpha
\partial _\xi ^\beta a(x,\xi )|}
{\alpha !^\sigma \beta !^s h^{|\alpha |}}
\right )
\intertext{and}
\nm a{s, \sigma , h,r} &\equiv \sup _{\alpha ,\beta \in \nn d} \nm a{s,\sigma , h ,r,\alpha ,\beta }.
\end{align*}

\par

By Leibniz rule we get
$$
\partial _x^\alpha \partial _\xi ^\beta a_\ep
=
\sum _{\gamma \le \alpha} \sum _{\delta \le \beta} {\alpha \choose \gamma}
{\beta \choose \delta}
\partial _x^\gamma \partial _\xi ^\delta \phi _\ep \cdot 
\partial _x^{\alpha -\gamma} \partial _\xi ^{\beta -\delta} a.
$$
Hence, if $r>0$ is arbitrary and $h \ge h_0$ is chosen such that $\nm a{s, \sigma , h,r}<\infty$,
we get
\begin{equation*}
\nm {a_\ep -a}{s,\sigma ,h ,r,\alpha ,\beta}
\le
J_{1}(s,\sigma ,h ,r,\ep ) + J_{2}(s,\sigma ,h ,r,\ep ,\alpha ,\beta ),
\end{equation*}
where
\begin{alignat*}{1}
J_{1}(s,\sigma ,&h ,r,\ep ) = \sup _{\alpha ,\beta \in \nn d}
\frac 
{\nm {e^{-r_0|\cdo |^2}(1-\phi (\ep \cdo ))\partial _x^\alpha \partial _\xi ^\beta a}{L^\infty}}
{\alpha !^{\sigma}\beta !^sh^{|\alpha +\beta |}}
\intertext{and}
J_{2}(s,\sigma ,&h ,r,\ep ,\alpha ,\beta )
\\
&=
(\alpha !^{\sigma}\beta !^sh^{|\alpha +\beta |})^{-1}
\sum  {\alpha \choose \gamma}{\beta \choose \delta}
\nm {\partial _x^\gamma \partial _\xi ^\delta \phi _\delta}{L^\infty}
\nm {\partial _x^{\alpha -\gamma}\partial _\xi ^{\beta -\delta}a}{L^\infty} ,
\end{alignat*}
where the last sum is taken over all $\gamma \in \nn d$ and $\delta \in \nn d$ such that
$\gamma \le \alpha$, $\delta \le \beta$ and $(\gamma ,\delta )\neq (0,0)$.

\par

Evidently, since $a\in  \Gamma _{s,\sigma}^{\sigma ,s ;\ep}(\rr {2d})$, it
follows by
straight-forward
estimates that $J_{1}(s,\sigma ,\ep _1,h, \delta )\to 0$ as
$\delta \to 0^+$. For $J_{2}(s,\sigma ,\ep _1,\delta ,\alpha )$ we have
\begin{multline*}
J_{2}(s,\sigma ,\ep_1,\delta ,\alpha) \lesssim (\alpha !^{s}\ep_1^{|\alpha |})^{-1}
\sum _{0\neq  \gamma \le \alpha} {\alpha \choose \gamma}
\ep_0^{|\gamma |}\delta ^{\frac {|\gamma |}2}\gamma !^{s}
\ep^{|\alpha -\gamma |}(\alpha -\gamma )!^{s}
\\[1ex]
\le
\frac {\delta ^{\frac 12}}{\ep_1^{|\alpha |}} \sum _{\gamma \le \alpha}
{\alpha \choose \gamma} \ep_0^{|\gamma |}\ep^{|\alpha -\gamma |}
= \delta ^{\frac 12}\left ( \frac {\ep_0+\ep}{\ep_1}\right )^{|\alpha |} \le \delta ^{\frac 12},
\end{multline*}
provided $\ep_1$ is chosen larger than $\ep_0+\ep$. This gives the result.
\end{proof}

\par

The next result concerns mapping properties of $\Gamma _{s,\sigma}^{\sigma ,s}$
spaces under trace operators.

\par

\par

\renewcommand{\rubrik}{Proposition A.2}

\begin{tom}
Let $\omega$ be a weight on $\rr {4d}$, $\omega _0(x,\xi ) =
\omega (x,x,\xi ,\xi )$ when $x,\xi \in \rr d$, $s,\sigma >0$
be such that $s+\sigma \ge 1$. Then the
trace map which takes
\begin{align*}
\rr {4d}\ni (x,y,\xi ,\eta) &\mapsto F(x,y,\xi ,\eta )
\intertext{to}
\rr {2d}\ni (x,\xi ) &\mapsto F(x,x,\xi ,\xi )
\end{align*}
is linear and continuous from $\Gamma _{(\omega )}^{\sigma ,s}(\rr {4d})$
into $\Gamma _{(\omega _0)}^{\sigma ,s}(\rr {2d})$. The same holds true
with $\Gamma _{(\omega )}^{\sigma ,s;0}$ and $\Gamma
_{(\omega _0)}^{\sigma ,s;0}$ in place of
$\Gamma _{(\omega )}^{\sigma ,s}$ and $\Gamma
_{(\omega _0)}^{\sigma ,s}$, respectively, at each occurrence.
\end{tom}

\par

Proposition A.2 follows by similar arguments as in the proof
of Lemma \ref{Lemma:theorem2}, using the Leibnitz type rule
$$
\partial _x^\alpha \partial _\xi ^\beta (F(x,x,\xi,\xi ))
=
\sum _{\gamma \le \alpha }\sum _{\delta \le \beta}
{\alpha \choose \gamma}{\beta \choose \delta}
(\partial _1^{\alpha -\gamma}\partial _2^{\beta -\delta}\partial _3^\gamma
\partial _4^\delta F)(x,x,\xi ,\xi).
$$
The details are left for the reader.

\par


\begin{thebibliography}{99}

\bibitem{AC} 
A. Ascanelli, M. Cappiello,\emph{ H\"older continuity in time for SG hyperbolic systems}.
J. Differential Equations {\bf 244} (2008), 2091--2121.

\bibitem{BM}
L. Boutet de Monvel  \emph{Op{\'e}rateurs pseudo-diff\'{e}rentiels
analytiques et op{\'e}rateurs
d'ordre infini}, Ann. Inst. Fourier, Grenoble \textbf{22} (1972), 229--268.

\bibitem{Ca1} 
M. Cappiello, \emph{Pseudodifferential parametrices of infinite order for
SG-hyperbolic problems}, Rend. Sem. Mat. Univ. Pol. Torino
\textbf{61} (2003), 411--441. 

\bibitem{Ca2}
M. Cappiello \emph{Fourier integral operators of infinite order and applications
to SG-hyperbolic equations}, Tsukuba J. Math. \textbf{28} (2004), 311--361.

\bibitem{CGR2}
M. Cappiello, T. Gramchev, L. Rodino, \emph{Sub-exponential decay and uniform
holomorphic extensions for semilinear pseudodifferential equations}. Comm. Partial
Differential Equations {\bf 35} (2010), 846--877.

\bibitem{CGR3}
M. Cappiello, T. Gramchev, L. Rodino,
\emph{Entire extensions and exponential decay for semilinear elliptic equations},
J. Anal. Math. {\bf 111} (2010), 339--367. 

\bibitem{CPP}
M. Cappiello, S. Pilipovi\'c, B. Prangoski  \emph{Parametrices and
hypoellipticity for pseudodifferential operators on spaces of tempered
ultradistributions}, J. Pseudo-Differ. Oper. Appl. \textbf{5} (2014), 491--506.

\bibitem{CPP2}
M. Cappiello, S. Pilipovi\'c, B. Prangoski  \emph{Semilinear pseudodifferential
equations in spaces of tempered ultradistributions}, 
J. Math. Anal. Appl. {\bf 442} (2016), 317--338. 

\bibitem{CRT}
M. Cappiello, L. Rodino, J. Toft, \emph{On the inverse to the harmonic oscillator},
Comm. Partial Differential Equations {\bf 40} (2015), 1096--1118. 

\bibitem{CaTo} M. Cappiello, J. Toft \emph{Pseudo-differential operators
in a Gelfand-Shilov setting}, Math. Nachr. \textbf{290} (2017), 738--755.

\bibitem{CaWa} E. Carypis, P. Wahlberg \emph{Propagation of exponential
phase space singularities for Schr{\"o}dinger equations with quadratic Hamiltonians},
J. Fourier Anal. Appl. {\bf  23} (2017), 530--571.

\bibitem{CZ}
L.Cattabriga, L. Zanghirati \emph{Fourier integral operators of infinite order on Gevrey
spaces. Applications to the Cauchy problem for hyperbolic operators}, Advances in
Microlocal Analysis, H.G. Garnir ed., D. Reidel Publ. Comp. (1986),  pp. 41-71.

\bibitem{CZ2}
L. Cattabriga, L. Zanghirati \emph{Fourier integral operators of infinite order on
Gevrey spaces. Application to the Cauchy problem for certain hyperbolic operators},
J. Math. Kyoto Univ. {\bf 30} (1990), 142--192.

\bibitem{ChuChuKim}
J. Chung, S. Y. Chung, D. Kim  \emph{Characterization of the Gelfand-Shilov spaces via Fourier transforms}, Proc. Am. Math. Soc. \textbf{124} (1996), 2101--2108.

\bibitem{CRe1} 
M. Cicognani, M. Reissig, \textit{Well-posedness for degenerate Schr\"odinger equations},
Evol. Equ. Control Theory {\bf 3} (2014), 15-33.

\bibitem{GS}
I.M. Gelfand and G.E. Shilov \emph{Generalized functions, Vol. 2. Spaces of fundamental and generalized functions}, Academic
Press, New York-London, 1968.

\bibitem{GS3}
I.M. Gelfand and G.E. Shilov \emph{Generalized functions, Vol. 3. Theory of differential equations}, Academic
Press, New York-London, 1967.

\bibitem{Gro}
K. Gr\" ochenig \emph{Foundations of Time-Frequency Analysis},
Birkh{\"a}user, Boston, 2001.

\bibitem{Gro2}
K. Gr\"ochenig \emph{Weight functions in time-frequency analysis
\rm {in: L. Rodino, M. W. Wong (Eds) Pseudodifferential
Operators: Partial Differential Equations and Time-Frequency 
Analysis}},
Fields Institute Comm. \textbf{52} 2007, 343--366.

\bibitem{GZ}
K. Gr\"ochenig and G. Zimmermann, {\emph Spaces of test functions via
the STFT}, J. Funct. Spaces Appl. {\bf 2} (2004) n. 1, 25--53.

\bibitem{Ho}  L. H{\"o}rmander \emph{The Analysis of Linear
Partial Differential Operators}, vol {I, III},
{Springer-Verlag}, Berlin Heidelberg NewYork Tokyo, 1983, 1985.

\bibitem{IV}
V. Ya. Ivrii \emph{Conditions for correctness in Gevrey
classes of the Cauchy problem
for weakly hyperbolic equations}, Sib. Math. J.
{\bf 17} (1976), 422--435.

\bibitem{KB} 
K.~Kajitani, A.~ Baba, {\em The Cauchy problem 
for Schr\" odinger type equations}, Bull. Sci. Math. {\bf 119} (1995), 459-473.

\bibitem{KN}
K. Kajitani, T. Nishitani, \textit{The hyperbolic Cauchy problem}, Lecture Notes
in Mathematics \textbf{1505} Springer-Verlag, Berlin, 1991.

\bibitem{Pi}
S. Pilipovi\'c \emph{Tempered ultradistributions},
Boll. U.M.I. {\bf 7} 2-B (1988), 235--251.

\bibitem{Pi2} S. Pilipovi\'c \emph{Generalization of Zemanian spaces
of generalized functions which
have orthonormal series expansions},
SIAM J. Math. Anal. \textbf{17} (1986), 477--484.

\bibitem{Pr} 
B.~Prangoski \emph{Pseudodifferential operators of
infinite order in spaces of
tempered ultradistributions}, J. Pseudo-Differ. Oper.
Appl. \textbf{4} (2013), 495--549.

\bibitem{Sh}
M. Shubin \emph{Pseudodifferential operators and the spectral
theory}, Springer Series in Soviet Mathematics,  Springer Verlag, 
Berlin
1987.

\bibitem{To7} J. Toft \emph{Continuity and Schatten-von Neumann 
Properties for
Pseudo-differential Operators on Modulation Spaces} in: J. Toft,
M. W. Wong, H. Zhu (Eds), Modern Trends in Pseudo-Differential
Operators, Operator Theory  Advances and Applications \textbf{172},
Birkh{\"a}user Verlag, Basel, 2007, pp. 173--206.

\bibitem{To11} J. Toft \emph{The Bargmann transform on modulation and 
Gelfand-Shilov
spaces, with applications to Toeplitz and pseudo-differential 
operators},  J. Pseudo-Differ.
Oper. Appl.  \textbf{3}  (2012), 145--227.

\bibitem{To15} J. Toft \emph{Images of function and
distribution spaces under the Bargmann transform},
J. Pseudo-Differ. Oper. Appl. \textbf{8} (2017), 83--139.

\bibitem{To16} J. Toft \emph{Matrix parameterized pseudo-differential 
calculi on modulation spaces
{\rm {in: M. Oberguggenberger, J. Toft, J. Vindas, P. Wahlberg (eds)}},
Generalized functions and Fourier analysis},
Operator Theory: Advances and Applications \textbf{260}
Birkh{\"a}user, 2017, pp. 215--235.


\bibitem{Tr} G. Tranquilli \emph{Global normal forms and global 
properties in
function spaces for second order Shubin type operators}
PhD Thesis, 2013.

\bibitem{Za}
L. Zanghirati \emph{Pseudodifferential operators of infinite
order and Gevrey classes},
Ann. Univ Ferrara, Sez. VII, Sc. Mat. \textbf{31} (1985), 197--219.
\end{thebibliography}
\end{document}